\newtheorem{Theorem}{Theorem}[section]
\newtheorem{lemma}[Theorem]{Lemma}
\newtheorem{Proposition}[Theorem]{Proposition}
\newtheorem{Corollary}[Theorem]{Corollary}
\newtheorem{remark}[Theorem]{Remark}
\numberwithin{equation}{section}
\newcommand{\la}{\langle}
\newcommand{\ra}{\rangle}
\newcommand{\beq}{\begin{equation}}
\newcommand{\eeq}{\end{equation}}
\newcommand{\bes}{\begin{equation*}}
\newcommand{\ees}{\end{equation*}}
\def\d{\displaystyle}
\def\N{{\Bbb N}}
\def\Z{{\Bbb Z}}
\def\R{{\Bbb R}}
\def\T{{\Bbb T}}
\def\C{{\Bbb C}}
\begin{document}

\title{1d Quantum Harmonic Oscillator Perturbed by a Potential with Logarithmic Decay}
\author{ Zhiguo Wang and Zhenguo Liang}

\address {School of Mathematical Sciences,
Soochow University, Suzhou
215006,  China} \email{zgwang@suda.edu.cn}

\address {School of Mathematical Sciences and
Key Lab of Mathematics for Nonlinear Science, Fudan University,
Shanghai 200433, China} \email{zgliang@fudan.edu.cn}

\thanks{The first author was partially supported by NSFC grants  11571249, 11271277; the second author was partially supported by NSFC grants 11571249, 11371097.}


\date{}

\begin{abstract} {In this paper we prove an infinite dimensional KAM theorem, in which the assumptions on the derivatives of perturbation in \cite{GT} are weakened from \textsl{polynomial decay} to \textsl{logarithmic decay}.  As a consequence, we apply it  to 1d quantum harmonic oscillators and prove  the reducibility of a linear harmonic oscillator, $T=-\frac{d^2}{dx^2}+x^2$, on $L^2(\R)$ perturbed by a quasi-periodic in time potential $V(x,\omega t; \omega)$ with \textsl{logarithmic decay}. This entails the
pure-point nature of the spectrum of the Floquet operator $K$, where
\begin{eqnarray*}
K:=-{\rm i}\sum_{k=1}^n\omega_k\frac{\partial}{\partial \theta_k}-\frac{d^2}{dx^2}+x^2+\varepsilon V(x,\theta;\omega),
 \end{eqnarray*}
 is defined on $L^2(\R) \otimes L^2(\T^n)$ and the potential $V(x,\theta;\omega)$ has  logarithmic decay as well as its gradient in $\omega$.}
\end{abstract}

\maketitle

\section{Introduction and Main Results}\label{introduction}
\subsection{Statement of the Results.}
In this paper we consider the linear equation
\begin{eqnarray}\label{maineq}
{\rm i}\partial_tu=-\partial_x^2 u+x^2u+\varepsilon V(x,\omega t;\omega)u,\ \ \ u=u(t,x),\ x\in\R,
 \end{eqnarray}
where $\varepsilon>0$ is a small parameter and the frequency vector $\omega$ of forced oscillations is regarded as a parameter in $\Pi:= [0,2\pi)^n$.
 We assume that the potential $V: \R\times \T^n \times \Pi\ni( x,\theta;\omega)\mapsto
\R$ is $C^3$ smooth in all its variables and analytic in $\theta$ where $\T^n=\R^n/2\pi\Z^{n}$ denotes the $n$-dimensional torus.  For $\rho>0$, the function $V( x,\theta;\omega)$ analytically in $\theta$ extends to the domain
$$\T_{\rho}^n = \{(a+bi)\in \C^{n}/2\pi\Z^n:\ |b|<\rho\}$$ 
as well as its gradient in $\omega$
and satisfies
\begin{eqnarray}
|V(x,\theta;\omega)|,\ |\partial_{\omega_j}V( x,\theta;\omega)| &\leq& C(1 + \ln(1+x^2))^{-2\beta},\label{VV1}\\
\ |\partial_xV(x,\theta;\omega)|,\ |\partial_x \partial_{\omega_j}V(x,\theta;\omega)|&\leq& C,\label{VV2}\\
\ |\partial^2_{x}V(x,\theta; \omega)|,\ |\partial^2_{x}\partial_{w_j}V(x,\theta;\omega) |&\leq& C,\label{VV3}
 \end{eqnarray}
 where $(x, \theta; \omega)\in \R\times \T_{\rho}^n\times \Pi$,  $\beta\geq 2(n+2)$, $j=1,\cdots, n$ and $C>0$.
\begin{Theorem}\label{maintheorem}
Assume that $V$ satisfies (\ref{VV1}) - (\ref{VV3}) and $\beta\geq 2(n+2)$. Then there exists $\varepsilon_0$ such that for all $0 \leq\varepsilon<\varepsilon_0$ there exists $\Pi_\varepsilon\subset [0, 2\pi)^n$ of positive measure and asymptotically full measure:
$Meas(\Pi_\varepsilon)\rightarrow(2\pi)^n$ as $\varepsilon\rightarrow 0$, such that for all $\omega\in\Pi_\varepsilon$, the linear Schr\"odinger
equation (\ref{maineq})
reduces, in $L^2(\R)$, to a linear equation with constant coefficients (with respect to the
time variable).
 \end{Theorem}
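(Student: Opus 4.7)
The plan is to recast Theorem \ref{maintheorem} as an infinite-dimensional Hamiltonian reducibility problem and invoke the abstract KAM theorem that constitutes the main analytical contribution of this paper. First I would diagonalize the unperturbed harmonic oscillator $T=-\partial_x^2+x^2$ in the Hermite basis $\{\phi_n\}_{n\ge 0}$, so that $T\phi_n=\lambda_n\phi_n$ with $\lambda_n=2n+1$, and expand $u(t,x)=\sum_{n\ge 0}\xi_n(t)\phi_n(x)$. Equation (\ref{maineq}) then becomes the linear Hamiltonian system
\begin{equation*}
{\rm i}\dot{\xi}_n=\lambda_n\xi_n+\varepsilon\sum_{m\ge 0}Q_{nm}(\omega t;\omega)\xi_m,\qquad Q_{nm}(\theta;\omega):=\la V(\cdot,\theta;\omega)\phi_m,\phi_n\ra_{L^2(\R)},
\end{equation*}
with Hamiltonian $H(\xi,\bar\xi;\theta,\omega)=\sum_n\lambda_n|\xi_n|^2+\varepsilon\sum_{n,m}Q_{nm}(\theta;\omega)\xi_n\bar\xi_m$, in which $\theta=\omega t\in\T^n$ plays the role of external angles and $\omega\in\Pi$ is the parameter.

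The decisive step is to verify that $(Q_{nm})$ satisfies the logarithmic-decay hypotheses that the abstract KAM theorem of this paper imposes on perturbations. Since each $\phi_n$ is essentially concentrated on the classical region $|x|\lesssim\sqrt{\lambda_n}$, with WKB bounds $|\phi_n(x)|\lesssim n^{-1/12}$ there and exponential decay beyond, hypothesis (\ref{VV1}) yields
\begin{equation*}
|Q_{nm}(\theta;\omega)|+|\partial_{\omega_j}Q_{nm}(\theta;\omega)|\lesssim (\ln(2+n+m))^{-2\beta},
\end{equation*}
uniformly for $\theta\in\T_\rho^n$ and $\omega\in\Pi$. Off-diagonal decay in $|n-m|$ comes from commuting $V$ with $T$, using $(\lambda_n-\lambda_m)Q_{nm}=\la[T,V]\phi_m,\phi_n\ra=-\la(2V'\partial_x+V'')\phi_m,\phi_n\ra$, whose right-hand side is controlled by (\ref{VV2}) and (\ref{VV3}). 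With these bounds secured, the KAM theorem produces, for $\omega$ in a Cantor set $\Pi_\varepsilon\subset[0,2\pi)^n$ with $\mathrm{Meas}(\Pi_\varepsilon)\to(2\pi)^n$ as $\varepsilon\to 0$, a symplectic, $\theta$-analytic change of variables $\Phi_\omega$ conjugating $H$ to the diagonal form $\sum_n\lambda_n^\infty(\omega)|\xi_n|^2$, which translates back to reducibility of (\ref{maineq}) in $L^2(\R)$ to a time-independent operator.

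The main obstacle I expect is the interplay between the weak logarithmic decay of $(Q_{nm})$ and the small-divisor losses that accumulate along the iteration. In the polynomial-decay framework of \cite{GT} each KAM step can absorb a polynomial loss in the truncation parameter $N_\nu$ into the decay factor; here each such loss must instead be reabsorbed into $(\ln N_\nu)^{-2\beta}$, which forces $\beta$ to scale with $n$ (the quantitative constraint $\beta\ge 2(n+2)$) and the sequence $N_\nu$ to grow super-exponentially. The corresponding second Melnikov estimates $|\la k,\omega\ra+\lambda_n^\infty(\omega)-\lambda_m^\infty(\omega)|\ge\gamma/\Delta_k$ must then be carried out with a denominator $\Delta_k$ that degrades only logarithmically in $n+m$, and it is the careful bookkeeping of this logarithmic compensation --- together with the resulting measure estimate on $\Pi_\varepsilon$ --- that I would expect to consume the bulk of the argument.
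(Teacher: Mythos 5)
Your high-level plan coincides with the paper's: expand in the Hermite basis, recast (\ref{maineq}) as an infinite-dimensional Hamiltonian system $H=N+\varepsilon P$ on the extended phase space, verify Assumptions $\mathcal{A}$ and $\mathcal{B}$, and invoke the abstract KAM theorem (Theorem \ref{KAMtheorem}, applied via Theorem \ref{KAMapp}). But the decisive step --- checking that $Q_{nm}=\la V(\cdot,\theta;\omega)h_m,h_n\ra$ satisfies Assumption $\mathcal{B}$ --- is both misstated and not actually justified in your sketch. Assumption $\mathcal{B}$ requires the product bound $|Q_{nm}|\lesssim(1+\ln n)^{-\beta}(1+\ln m)^{-\beta}$, not the strictly stronger $(\ln(2+n+m))^{-2\beta}$ you claim: for $n$ fixed and $m\to\infty$ your bound would force decay like $(\ln m)^{-2\beta}$, whereas only $(1+\ln m)^{-\beta}$ is available, since by Cauchy--Schwarz and (\ref{VV1})
\begin{equation*}
|Q_{nm}|\le\Big(\int_\R\frac{h_n^2}{(1+\ln(1+x^2))^{2\beta}}\,dx\Big)^{1/2}\Big(\int_\R\frac{h_m^2}{(1+\ln(1+x^2))^{2\beta}}\,dx\Big)^{1/2}=|||h_n|||\cdot|||h_m|||,
\end{equation*}
and the estimate $|||h_n|||\lesssim(1+\ln n)^{-\beta}$ is Lemma \ref{Indecaysection1}. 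That weighted $L^2$ estimate is the genuine technical content of the verification, and it does not follow from the crude pointwise bound $|h_n|\lesssim n^{-1/12}$: the Hermite function carries order-one weighted mass near $x=0$ where the weight $(1+\ln(1+x^2))^{-2\beta}\approx 1$, so the gain must come from the precise distribution of $h_n^2$ over the classical interval $|x|\lesssim\sqrt{\lambda_n}$. The paper proves Lemma \ref{Indecaysection1} in Section \ref{S3} by Langer's turning point theory (the Hankel-function representation of $h_n$ near $X=\sqrt{\lambda_n}$, following Titchmarsh \cite{T2} and Yajima--Zhang \cite{Yajima}), splitting the integral into oscillatory, turning-point, and evanescent regimes. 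Without this lemma the KAM iteration never starts, so your proof has a genuine gap precisely where the logarithmic-decay hypothesis must be established.

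A secondary point: the commutator identity $(\lambda_n-\lambda_m)Q_{nm}=-\la(2V'\partial_x+V'')h_m,h_n\ra$ you invoke for off-diagonal decay in $|n-m|$ is neither used nor needed in this verification. Assumption $\mathcal{B}$ imposes no $|n-m|$-decay on the perturbation $P$; the extra factor $1/(1+|j-l|)$ appears only in the improved norm $\Gamma^{\beta,+}_{r,D(s,r)}$ satisfied by the solution $F$ of the homological equation (Lemma \ref{fsemi}), where it is produced automatically by the small divisor $|\Omega_i-\Omega_j|\sim|i-j|$, not by any hypothesis on $V$.
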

 Similar as \cite{GT}, the above theorem has two direct corollaries. As a preparation we define the harmonic oscillator $T=-\frac{d^2}{dx^2}+x^2$ and its related Sobolev space. Let $p\geq 2$ and denote  $\ell^{2,p}$ be the Hilbert space  of all real  $w=(w_j)_{j\geq 1}$
with
$$\|w\|_{p}^2 = \sum\limits_{j\geq 1} j^p|w_j|^2<\infty.$$
The operator $T$ has eigenfunctions $(h_j)_{j\geq 1}$, so called  the Hermite functions, which  satisfy
\begin{eqnarray}\label{eigensection1}
Th_j=(2j-1)h_j, \qquad \|h_j\|_{L^2(\R)}=1,\qquad  j\geq 1,
\end{eqnarray} and form a Hilbertian basis of $L^2(\R)$. Let $u=\sum\limits_{j\geq 1}u_jh_j$ be a typical element of $L^2(\R)$. Then $(u_j)\in \ell^{2,p}$ if and only if $$u\in \mathcal{H}^p : = D(T^{p/2})=\{u\in L^2(\R):\  x^{\alpha_1}\partial^{\alpha_2}u \in L^{2}(\R)\ {\rm for}\ 0\leq\alpha_1+\alpha_2\leq p \}.$$
\indent For a function $f\in \mathcal{H}^p(\R)$ we define $$\|f\|_{\mathcal{H}^p}^2=\sum\limits_{0\leq \alpha_1+\alpha_2\leq p}\|x^{\alpha_1}\partial_x^{\alpha_2}f\|_{L^2}^2<\infty.$$
Then we have
 \begin{Corollary}\label{coro1}
Assume that $V$ and $\partial_{\omega_j}V,\ j=1,\cdots, n$ are  $C^\infty$ in $x$ and there exists a constant $C>0$ such that for all $\nu \geq 1$, $x\in \R$ and $|\Im \theta|<\rho$,
\begin{eqnarray*}\label{forarbitraryHpnorm1}
|V(x,\theta;\omega)|,\ |\partial_{\omega_j}V(x,\theta;\omega)| &\leq& C(1 + \ln(1+x^2))^{-2\beta},\\
|\partial_x^{\nu}V(x,\theta; \omega)|,\ |\partial_x^{\nu}V_{\omega_j}(x,\theta; \omega)|&\leq& C.\label{forarbitraryHpnorm2}
\end{eqnarray*}
Let $p \geq0$ and $u_0\in \mathcal{H}^p$ and $\beta\geq 2(n+2)$. Then there exists $\varepsilon_0 > 0$ so that for all $0 \leq\varepsilon<\varepsilon_0$
and $\omega\in\Pi_\varepsilon$, there exists a unique solution $u \in \mathcal{C}(\R,\mathcal{H}^p)$ of (\ref{maineq}) so that $u(0) = u_0$.
Moreover, $u$ is almost-periodic in time and we have the bounds
$$(1-\varepsilon C)\|u_0\|_{\mathcal{H}^p} \leq \|u(t)\|_{\mathcal{H}^p} \leq (1 + \varepsilon C)\|u_0\|_{\mathcal{H}^p} , \ {\rm with}\ t \in \R,$$
for some $C = C(p, \omega)$.
 \end{Corollary}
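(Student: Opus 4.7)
The strategy is to upgrade the reducibility of Theorem \ref{maintheorem} from $L^2(\R)$ to each $\mathcal{H}^p$, exploiting the extra hypothesis that \emph{all} $\partial_x^\nu V$ and $\partial_x^\nu \partial_{\omega_j}V$ are bounded. Theorem \ref{maintheorem} produces, for $\omega \in \Pi_\varepsilon$, a quasi-periodic family of unitary operators $\Psi(\omega t;\omega)$ on $L^2(\R)$ such that $v(t) := \Psi(\omega t;\omega)^{-1} u(t)$ solves an autonomous equation $\mathrm{i}\partial_t v = Q_\omega v$, with $Q_\omega$ self-adjoint, diagonal in the Hermite basis $(h_j)$, and having eigenvalues $\lambda_j(\omega) = 2j - 1 + O(\varepsilon)$.

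The first and main step is to revisit the KAM scheme underlying Theorem \ref{maintheorem} and verify that its estimates close in $\mathcal{H}^p$ rather than only in $L^2$. The Hamiltonians $F_m$ generating the $m$-th step of the iteration are quadratic forms whose matrix coefficients involve inner products $\langle V_m h_i, h_j\rangle$; under $|\partial_x^\nu V|, |\partial_x^\nu \partial_{\omega_j}V| \leq C$ for every $\nu \geq 1$, repeated integration by parts against the Hermite recursion shows that these matrix elements decay rapidly in $|i-j|$ in a $p$-independent manner. This allows one to propagate an inductive bound of the form $\|X_{F_m}\|_{\mathcal{H}^p \to \mathcal{H}^p} \leq C(p)\,\varepsilon_m$ with $\sum_m \varepsilon_m < \infty$, hence to conclude that the composition $\Psi$ is bounded and invertible on $\mathcal{H}^p$, with $\|\Psi^{\pm 1} - I\|_{\mathcal{H}^p \to \mathcal{H}^p} \leq C(p,\omega)\,\varepsilon$.

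Granted this extension, the remainder is essentially formal. Given $u_0 \in \mathcal{H}^p$, set $v_0 = \Psi(0;\omega)^{-1} u_0 \in \mathcal{H}^p$ and define
\[
v(t) = e^{-\mathrm{i} t Q_\omega} v_0 = \sum_{j\geq 1} e^{-\mathrm{i}\lambda_j(\omega)t}\,\langle v_0, h_j\rangle\, h_j,
\]
which is continuous in $t$ with values in $\mathcal{H}^p$ and whose $\mathcal{H}^p$-norm is preserved, since $Q_\omega$ is self-adjoint and diagonal in the basis $(h_j)$. Then $u(t) := \Psi(\omega t;\omega)\,v(t)$ is the required solution, and the two-sided bound
\[
(1-\varepsilon C)\|u_0\|_{\mathcal{H}^p} \leq \|u(t)\|_{\mathcal{H}^p} \leq (1+\varepsilon C)\|u_0\|_{\mathcal{H}^p}
\]
is read off immediately from $\|\Psi^{\pm 1}\|_{\mathcal{H}^p \to \mathcal{H}^p} = 1 + O(\varepsilon)$. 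Uniqueness follows from the same identity applied to the difference of two putative solutions. Almost-periodicity of $u(t)$ is inherited from that of $v(t)$: the series above is Bohr-almost-periodic as a map $\R \to \mathcal{H}^p$ with frequency module generated by $\{\lambda_j(\omega)\}_{j\geq 1}$, and composition with the quasi-periodic function $\theta \mapsto \Psi(\theta;\omega) \in \mathcal{L}(\mathcal{H}^p)$ preserves this property.

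The principal obstacle is the first step. In the polynomial-decay regime of \cite{GT} there is ample slack in the decay exponents to control the commutator-type terms that transfer regularity from $L^2(\R)$ to $\mathcal{H}^p$; with only logarithmic decay of $V$ the margin is much thinner, and one must exploit both the analyticity of $V$ in $\theta$ on $\T_\rho^n$ and the $\nu$-uniform bounds on $\partial_x^\nu V$ to close the induction uniformly in $p$ while simultaneously ensuring that the small-divisor excisions from Theorem \ref{maintheorem} are not worsened.
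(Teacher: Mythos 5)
The paper's own ``proof'' of this corollary is the single line \emph{See [GT]}, so there is no detailed argument in this paper to check your sketch against; one must compare it with the Gr\'ebert--Thomann route, which the authors silently invoke. Your overall structure does match that route: pull the solution back through the time-quasiperiodic linear conjugation (your $\Psi$, the paper's $L_\omega(\omega t)$ from Theorem \ref{KAMapp}), note that in the new variables the dynamics is diagonal in $(h_j)$ with real frequencies $\Omega_j^*$ so the $\mathcal{H}^p$-norm is exactly conserved, and push forward with the boundedness of $L_\omega^{\pm1}$ on $\ell^{2,p}$ to get the two-sided estimate; uniqueness is linearity, and almost-periodicity is the uniform limit of trigonometric polynomials composed with a quasi-periodic $\mathcal{L}(\mathcal{H}^p)$-valued map, as you say. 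The ``principal obstacle'' you flag --- boundedness of $L_\omega(\theta)$ on $\ell^{2,p}$ for all $p\geq 0$ under the extra $C^\infty$-in-$x$ hypotheses --- is precisely item (ii) of Theorem \ref{KAMapp}, whose proof in this paper is only spelled out for $p\leq 2$; so you have correctly located where the real work is, and both you and the paper ultimately defer it to [GT].

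One substantive imprecision is worth flagging. The claim that, under $|\partial_x^\nu V|\leq C$ for all $\nu$, ``repeated integration by parts against the Hermite recursion shows that these matrix elements decay rapidly in $|i-j|$ in a $p$-independent manner'' overstates what the commutator identity gives. Iterating
\[
2(i-j)\langle Vh_i,h_j\rangle = \langle [T,V]h_i,h_j\rangle,\qquad [T,V]=-V''-2V'\partial_x,
\]
one obtains $|i-j|^k|\langle Vh_i,h_j\rangle|\lesssim C^k(\max(i,j))^{k/2}$, because each $\partial_x$ landing on $h_i$ costs a factor $\sqrt{i}$. This is not uniform off-diagonal decay --- it only improves once $|i-j|\gg\sqrt{\max(i,j)}$ --- and the naive Schur test with weight $(i/j)^{p/2}$ does \emph{not} close from such bounds. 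The cleaner observation (and the one that actually works) is that multiplication by $V$ maps $\mathcal{H}^p$ to $\mathcal{H}^p$ boundedly for every $p$ by the Leibniz rule and $|\partial_x^\nu V|\leq C$; the $\ell^{2,p}$-boundedness of the KAM generators $F_m$ then comes from that operator bound combined with the small-divisor gain $\langle l\rangle=|i-j|$ and the $\mathcal{M}_\beta$ band structure built in Section \ref{S5}, not from a pointwise $|i-j|^{-k}$ estimate. Relatedly, your remark that the logarithmic (as opposed to polynomial) decay of $V$ makes the $\mathcal{H}^p$ ``margin much thinner'' conflates two issues: the decay rate of $V$ in $x$ governs the small-divisor and measure estimates (Sections \ref{S5} and \ref{S6}), whereas the $\mathcal{H}^p$ boundedness of the conjugation rests only on $|\partial_x^\nu V|\leq C$, which is the same hypothesis in both regimes.
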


 Consider  the Floquet operator on $  L^2(\R)\otimes L^2(\T^n)$
\begin{eqnarray}\label{KK}
K:=-{\rm i}\sum_{k=1}^n\omega_k\frac{\partial}{\partial \theta_k}-\frac{d^2}{dx^2}+x^2+\varepsilon V(x,\theta;\omega),
 \end{eqnarray}
 then we have
 \begin{Corollary}\label{coro2}
Assume that $V$  satisfies the same conditions as in Theorem \ref{maintheorem} and $\beta\geq 2(n+2)$. There exists $\varepsilon_0 > 0$ so that for all $0 \leq\varepsilon<\varepsilon_0$
and $\omega\in\Pi_\varepsilon$, the spectrum of the Floquet operator $K$ is pure point.
 \end{Corollary}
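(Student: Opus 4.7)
My plan is to deduce the pure-point nature of the Floquet spectrum of $K$ from the reducibility statement of Theorem \ref{maintheorem} by conjugating $K$ to an operator whose spectrum is manifestly pure point. This is the standard Howland/Bellissard scheme followed in \cite{GT}, and the work will consist of checking that it goes through under the present logarithmic-decay hypotheses.

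First, I would unpack what Theorem \ref{maintheorem} actually produces. For each $\omega\in\Pi_\varepsilon$ the reducibility yields a quasi-periodic Floquet cocycle, i.e.\ a map $\theta\mapsto \Psi(\theta,\omega)$ into the unitary group of $L^2(\R)$, analytic on $\T_\rho^n$, such that $u(t,x)=\bigl(\Psi(\omega t,\omega)v(t,\cdot)\bigr)(x)$ transforms (\ref{maineq}) into a time-independent equation $\mathrm{i}\partial_t v = Q_\infty v$. Here $Q_\infty$ is self-adjoint on $L^2(\R)$ and, being produced by the KAM scheme as an $\varepsilon$-small perturbation of $T=-d^2/dx^2+x^2$ that is diagonal in a basis obtained from the Hermite functions, has pure point spectrum: an orthonormal basis $\{\varphi_j\}_{j\ge 1}$ of eigenfunctions with eigenvalues $\lambda_j^\infty=2j-1+O(\varepsilon)$.

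Next I would extend $\Psi$ to the extended phase space $\mathcal{H}:=L^2(\R)\otimes L^2(\T^n)$ by the pointwise-in-$\theta$ rule $(\mathcal{M}\psi)(x,\theta):=\bigl(\Psi(\theta,\omega)\psi(\cdot,\theta)\bigr)(x)$, which is unitary on $\mathcal{H}$ because $\Psi(\theta,\omega)$ is unitary for each $\theta$. Using the chain rule identity
\begin{equation*}
\Bigl(-\mathrm{i}\sum_k\omega_k\partial_{\theta_k}\Bigr)\bigl(\Psi(\theta,\omega)v\bigr)=\Psi(\theta,\omega)\Bigl(-\mathrm{i}\sum_k\omega_k\partial_{\theta_k}\Bigr)v-\mathrm{i}\bigl(\omega\cdot\partial_\theta\Psi\bigr)(\theta,\omega)v
\end{equation*}
together with the defining property of $\Psi$ (which encodes that $\Psi$ intertwines the time-dependent and time-independent generators), I would obtain
\begin{equation*}
\mathcal{M}^{-1}K\mathcal{M}=-\mathrm{i}\sum_{k=1}^n\omega_k\partial_{\theta_k}+Q_\infty=:K_\infty
\end{equation*}
on a suitable core. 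The right-hand side is diagonalized by the orthonormal basis $\{\varphi_j(x)e^{\mathrm{i}\langle k,\theta\rangle}\}_{(j,k)\in\N\times\Z^n}$ of $\mathcal{H}$ with eigenvalues $\lambda_j^\infty+\langle k,\omega\rangle$, so $K_\infty$ has pure point spectrum; pulling the eigenbasis back through $\mathcal{M}$ gives a complete set of eigenfunctions for $K$.

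The main obstacle I anticipate is operator-theoretic rather than spectral: I have to verify the conjugation identity rigorously, which requires that $\Psi$ be differentiable in $\theta$ (this follows from its analyticity on $\T_\rho^n$, but one must control $\omega\cdot\partial_\theta\Psi$ as a bounded operator on $L^2(\R)$), that $\mathcal{M}$ maps the domain of $K$ onto the domain of $K_\infty$ (essentially that it preserves the intersection of the domains of $T$ and of $\omega\cdot\partial_\theta$ on $\mathcal{H}$), and that self-adjointness is transferred so that the spectral theorem genuinely applies. Once these domain issues are settled, the pure-point conclusion is automatic. The remaining checks reduce to routine verifications using the quantitative estimates already built into the proof of Theorem \ref{maintheorem}.
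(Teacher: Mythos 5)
Your proposal follows essentially the same route as the paper: both use the reducibility of Theorem \ref{maintheorem} (via $L_\omega(\theta)$ and the new normal form $N^*$) to conjugate $K$ to the constant-coefficient operator $-{\rm i}\omega\cdot\partial_\theta + Q_\infty$ on $L^2(\R)\otimes L^2(\T^n)$, whose eigenbasis $\{\varphi_j(x)e^{{\rm i}\langle k,\theta\rangle}\}$ with eigenvalues $\Omega_j^*+\langle k,\omega\rangle$ pulls back to a complete eigenfamily for $K$. The paper exhibits the eigenfunctions $\psi_j(\theta,x)e^{{\rm i}k\cdot\theta}$ of $K$ directly by "a straight computation" rather than phrasing it as a unitary conjugation, but these are exactly the images under your $\mathcal{M}$, so the arguments coincide in substance.
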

\subsection{Related results.}
As in \cite{BG} the equations (\ref{maineq})  can be generalized into a time-dependent Schr\"odinger equation
\begin{eqnarray}\label{NLStime3}
 {\rm i} \partial_{t}\psi(t)=(A+\epsilon P(\omega t))\psi(t),
 \end{eqnarray}
where $A$ is a positive self-adjoint operator on a separable Hilbert space $\mathcal{H}$ and the perturbation $P$ is an operator-valued function  from $\T^n$ into the space of
symmetric operators on $\mathcal{H}$. The Floquet operator associated with (\ref{NLStime3}) is defined by
\begin{eqnarray*}
K_{F} : = -{\rm i}\omega \cdot  \partial_{\theta}+ A +\epsilon P(\theta)\quad {\rm on}\ \mathcal{H} \otimes L^2(\T^n).
\end{eqnarray*}
\indent It is well-known that long-time behavior of the solution $\psi(t)$ of the time-dependent Schr\"odinger equation (\ref{NLStime3})
is closely related to the spectral properties of the Floquet operator $K_{F}$(see Wang \cite{Wang}). It has been proved in \cite{BEL, BLE, DS, DSV, GY, H, JOY, N} that the Floquet operator $K_{F}$ is of pure point spectra or no absolutely continuous spectra where $P$ is bounded. When $P$ is unbounded,  the first result was obtained by Bambusi and  Graffi \cite{BG} where they considered  the time dependent Schr\"odinger equation
\begin{eqnarray}\label{NLStime1}
{\rm i}\partial_{t} \psi(x,t) = H(t) \psi(x,t), x\in \R;  \qquad H(t) : = -\frac{d^2}{dx^2} + Q(x) +\epsilon V(x,\omega t),\ \epsilon\in \R,
\end{eqnarray}
where $Q(x) \sim |x|^{\alpha}$ with $\alpha>2$ as $|x|\rightarrow \infty$ and $|V(x,\theta)||x|^{-\beta}$ is bounded as $|x|\rightarrow \infty$ for some $\beta<\frac{\alpha-2}{2}$.
This entails the pure-point nature of the  spectrum of  the Floquet operator
\begin{eqnarray*}\label{floquetspectrum1}
K_{F} : = -{\rm i}\omega \cdot \partial_{\theta}-\frac{d^2}{dx^2}+Q(x)+\varepsilon V(x,\theta),
\end{eqnarray*}
on $L^2(\R) \otimes L^2(\T^n)$ for $\varepsilon$ small. Liu and Yuan \cite{LiuYuan0} solved the limit case when $\alpha>2$ and $\beta\leq \frac{\alpha-2}{2}$, which can be applied to the so-called quantum version of the Duffing oscillator
\begin{eqnarray*}\label{NLStime2}
{\rm i}\partial_{t} \psi(x,t) = \big(-\frac{d^2}{dx^2} +x^4 +\epsilon xV(\omega t)\big) \psi(x,t), \qquad x,\epsilon\in \R.
\end{eqnarray*}
\indent The results in \cite{BG} and \cite{LiuYuan0} didn't include the case $Q(x)=x^2$(see (\ref{NLStime1})), which is so-called quantum harmonic oscillator($\alpha=2$). The quantum harmonic oscillator is the quantum-mechanical analog of the classical harmonic oscillator. Because an arbitrary potential can usually be approximated as a harmonic potential at the vicinity of a stable equilibrium point, it is one of the most important model systems in quantum mechanics. \\
\indent In \cite{EV}  Enss and Veselic proved that, if $\omega$ is rational,  the Floquet operator   $K$ (see (\ref{KK})) has pure point spectrum when the perturbing potential $V$ is bounded and has sufficiently fast decay at infinity.  In \cite{Wang} Wang proved the spectrum of the Floquet operator $K$ is pure point where the perturbing potential 
$V(x,\theta) = e^{-x^2}\sum\limits_{k=1}^{n}\cos \theta_{k}$ has
\textit{exponential decay}. Greb\'ert and Thomann \cite{GT} improved the results in \cite{Wang} from exponential decay to \textit{polynomial decay}.
 {In this paper we improve the results in \cite{GT} from polynomial decay to \textit{logarithmic decay}. But we know nothing about $K_{F}$ when $\alpha=2$ and $\beta=0$, i.e.  $Q(x) \sim |x|^2$ as $|x|\rightarrow \infty$ and $V(x,\theta)$ is only bounded(except the special case when $V(x,\theta)$ is independent of $x$, see \cite{GT}). This problem was firstly posed by Eliasson in \cite{E1} and is still open till now. }\\
\indent For $V$ is unbounded it is a different situation. See \cite{ GY, HLS, YK} and related discussions in \cite{Wang}.
\subsection{The main idea for proving Theorem \ref{maintheorem}.}  As in \cite{BG}, \cite{EK0} and \cite{GT} the proof of Theorem \ref{maintheorem} is closely related to an infinite dimensional KAM theorem.  Since the formulation
of this abstract theorem is technical and lengthy, we postpone it to Sect. \ref{S2}, see Theorem \ref{KAMtheorem}(KAM Theorem). Let us firstly explain the main idea and techniques for proving Theorem \ref{KAMtheorem}.\\
\indent We begin with a parameter dependent family of analytic Hamiltonians of the form
 \begin{eqnarray*}\label{}
H &=& N(y, z,\bar{z}; \xi)+P(\theta,y,z,\bar{z};\xi)\\
&= & \sum_{1\leq j\leq n}
\omega_j(\xi)  y_j + \sum_{j\geq 1}
\Omega_j(\xi)z_j\bar{z}_j+P(\theta,y,z,\bar{z};\xi),
 \end{eqnarray*}
where $(\theta,y)\in\T^n\times\R^n,$ $z=(z_j)_{j\geq1},\bar{z}=(\bar{z}_j)_{j\geq1}$ are infinitely many variables, $\omega(\xi)=(\omega_j(\xi))\in\R^n$, $\Omega(\xi)=(\Omega_j(\xi))\in\R^\infty$ and $\xi\in \Pi\subset\R^n.$
For our applications we suppose $\Omega_j(\xi)=2j-1$ for simplicity and $\la l,\Omega(\xi)\ra\neq0,\ \forall\ 1\leq|l|\leq2.$
Our aim is to find a suitable real analytic symplectic coordinates transformation $\Phi$ such that $H\circ\Phi=N^*+P^*$ where $N^{*}$ has a similar form as $N$ and $P^*$ is analytic and globally of order 3. \\
\indent Actually, following the formulation of the KAM theorem given in \cite{Pos}(see also \cite{GT}), it is sufficient to verify that there exist $\xi\in \Pi$ with big Lebesgue measure which satisfy
 \begin{eqnarray}\label{ssmall}
|\la k,\omega_\nu(\xi)\ra+\la l,\Omega_\nu(\xi)\ra|\geq\frac{\la l\ra \alpha_\nu}{1+|k|^\tau},
 \end{eqnarray}
where $\omega_\nu(\xi), \Omega_\nu(\xi)$ are the frequencies in $\nu$-th KAM step.\\
\indent To obtain (\ref{ssmall}) in most cases we need some assumptions for $X_P$ such as \cite{Ku0} and \cite{Pos}. See Theorem 1 in \cite{Ku0} and its applications to 1d wave equation and 1d harmonic oscillator with a smooth nonlinearity of type $P=\frac12\int_\R\varphi(|u*\xi(x)|^2;a)dx)$($u*\xi$ is the convolution with a smooth real-valued function $\xi$, vanishing at infinity). In \cite{Pos} P\"oschel required a similar condition on $X_P$, i.e.
  \begin{eqnarray}\label{posac}
 X_P:\ \mathcal{P}^p\rightarrow\mathcal{P}^{\bar{p}},\qquad \bar{p}>p.
  \end{eqnarray}
See \cite{Pos1} for its application to a nonlinear wave equation.\\
\indent The assumption (\ref{posac}) is smartly weakened in \cite{GT}. Using the T\"oplitz-Lipschitz techniques from Eliasson and Kuksin in \cite{EK}, Gr\'{e}bert and Thomann assumed a weaker regularity on $P$, more clearly,
\begin{eqnarray*}
\Big\|\frac{\partial P}{\partial\omega_j}\Big\|^*_{D(s,r)}  \leq  \frac{r}{j^{\beta_1}}\langle P
\rangle^*_{r,D(s,r)},\qquad
\Big\|\frac{\partial^{2}P}{\partial\omega_j\partial\omega_l}\Big\|^*_{D(s,r)}  \leq  \frac{1}{(jl)^{\beta_1}}\langle P\rangle^*_{r,D(s,r)},
\end{eqnarray*}
for all  $j,l\geq 1$  and $ \omega_j=z_j,\overline{z}_j,$
where $\beta_1>0$ and $\|\cdot\|^*_{D(s,r)} $ stands for either $\|\cdot\|_{D(s,r)} $ or $\|\cdot\|^{\mathfrak{L}}_{D(s,r)} $. To recover this assumption at each step they noticed that $F$ satisfied an even better estimate, i.e.
\begin{eqnarray*}
\Big\|\frac{\partial F}{\partial\omega_j}\Big\|^*_{D(s,r)}  \leq  \frac{r}{j^{\beta_1+1}}\langle F
\rangle^{+,*}_{r,D(s,r)},\qquad
\Big\|\frac{\partial^{2}F}{\partial\omega_j\partial\omega_l}\Big\|^*_{D(s,r)}  \leq  \frac{\langle F\rangle^{+,*}_{r,D(s,r)} }{(jl)^{\beta_1}(1+|j-l|)}
\end{eqnarray*}
for all  $j,l\geq 1$  and $ \omega_j=z_j,\overline{z}_j.$\\
\indent In this paper we further weaken the assumptions on $P$, which satisfies the logarithmic decay, i.e.
\begin{eqnarray*}
\Big\|\frac{\partial P}{\partial\omega_j}\Big\|_{D(s,r)}^{*}  \leq  \frac{r}{(1+\ln j)^{\beta}}\langle P
\rangle_{r,D(s,r)}^{*},\qquad
\Big\|\frac{\partial^{2}P}{\partial\omega_j\partial\omega_l}\Big\|_{D(s,r)}^{*}  \leq  \frac{\langle P\rangle_{r,D(s,r)}^{*} }{(1+\ln j)^{\beta}(1+\ln l)^{\beta}}
\end{eqnarray*}
for all  $j,l\geq 1$  and $ \omega_j=z_j,\overline{z}_j.$   The index $\beta\geq2(n+2)$ is apparently different from $\beta_1>0$ in \cite{GT}, which we will explain in the following. As in \cite{GT}  we  obtain a better estimate for $F$, i.e.
\begin{eqnarray*}
\Big\|\frac{\partial F}{\partial\omega_j}\Big\|^*_{D(s,r)}  \leq \frac{r}{j(1+\ln j)^{\beta}}\langle F
\rangle^{+,*}_{r,D(s,r)},\quad
\Big\|\frac{\partial^{2}F}{\partial\omega_j\partial\omega_l}\Big\|^*_{D(s,r)}  \leq  \frac{\langle F\rangle^{+,*}_{r,D(s,r)} }{(1+\ln j)^{\beta}(1+\ln l)^{\beta}(1+|j-l|)}
\end{eqnarray*}
for all  $j,l\geq 1$  and $ \omega_j=z_j,\overline{z}_j.$ \\
\indent The shift in normal frequencies in the next step $\Omega^+_j(\xi)=\Omega_j(\xi)+\widehat{\Omega}_j(\xi)$ satisfies  much weaker estimate
  \begin{eqnarray}\label{oomega}
  |\widehat{\Omega}_j|\preceq\alpha(1+\ln j)^{-2\beta},\ j\geq 1,
     \end{eqnarray}
 comparing with the corresponding one in \cite{GT}, which is
\begin{eqnarray}\label{oomega1}
|\widehat{\Omega}_j|\preceq\alpha j^{-2\beta},\ j\geq 1.
\end{eqnarray}
Fact proved that the weaker estimate (\ref{oomega}) brought new  troubles  in measure estimates.  The consequence is that we need a new small divisor condition
 \begin{eqnarray}\label{ssmall0}
|\la k,\omega(\xi)\ra+\la l,\Omega(\xi)\ra|\geq\frac{\la l\ra \alpha}{\exp(|k|^{\tau/\beta})}, \qquad \beta\geq 2\tau\geq 2(n+2),
 \end{eqnarray}
in the KAM proof, which will be clear in the following. \\
\indent For simplicity we suppose $\omega(\xi)=\xi$ and $\Omega_j(\xi)=2j-1+O(\alpha(1+\ln j)^{-2\beta})(j\geq1)$.  Our main trouble is to estimate the set
 \begin{eqnarray*}\label{ssmall1}
\bigcup_{\substack{k,j, b\\ 1\leq b\leq c|k|}}\Big\{\xi\in\Pi:\ |f_{k,jb}(\xi) |<\frac{\alpha b}{\Delta_1(|k|)}\Big\},
 \end{eqnarray*}
where $$f_{k,jb}(\xi): = \la k,\omega(\xi)\ra+\Omega_i(\xi)-\Omega_j(\xi)=\la k,\xi\ra+2b+O\big( {\alpha}{(1+\ln j)^{-2\beta}}\big) ,$$
$i=j+b$ and $\Delta_1(\cdot)$ will be chosen later.
From a straightforward computation(see \ref{oomega}),  if $j\geq j_0=\exp\{(c\Delta(|k|))^{\frac{1}{2\beta}}\}$, then
 $|f_{k,jb}(\xi) |\geq {\alpha b}{\Delta^{-1}(|k|)}\geq {\alpha b}{\Delta^{-1}_1(|k|)}$, where $\Delta_1(|k|)\geq \Delta(|k|)$ and $|\la k,\xi\ra+2b|\geq  {2\alpha b}{\Delta^{-1}(|k|)}$.
For the rest,
 \begin{eqnarray}\label{ssmall2}
Meas\Big( \bigcup_{\substack{1\leq j\leq j_0\\1\leq b\leq c|k|}}\big\{\xi\in\Pi:\ |f_{k,jb}(\xi) |<\frac{\alpha b}{\Delta_1(|k|)}\big\}  \Big)\preceq  \frac{\alpha |k|^2}{\Delta_1(|k|)}\cdot \exp\big\{(c\Delta(|k|))^{\frac{1}{2\beta}}\big\}.
 \end{eqnarray}
(\ref{ssmall2}) explains (\ref{ssmall0}) since we set $\Delta(l)=l^\tau$,  and thus  $\Delta_1(l)=\exp(l^{\tau/\beta})$. The index $\beta\geq 2(n+2)$ comes from
 Lemma \ref{XFesti}($\beta>\tau$) and $\beta=\iota \tau\geq 2\tau\geq 2(n+2)$(see Theorem \ref{Meastheorem} for $\tau\geq n+2$ ). In the KAM proof we set $\iota\geq 2$ for simplicity. \\
\indent We remark that the structure (\ref{oomega}) or (\ref{oomega1}) is not necessary for some evolution equations. Based on the work \cite{EK}, Berti, Biasco and Procesi in \cite{BBP1,BBP2}  found a remarkable structure for $\Omega^+_j(\xi)=\Omega_j(\xi)+\widehat{\Omega}_j(\xi)$ in 1d derivative wave equation where $\widehat{\Omega}_j(\xi)=a_+(\xi)+O(\frac1j)(j\gg1)$ and $a_+(\xi)$  is independent of $j$(similar for $j<0$). But we don't know whether there exists similar or weaker structure for 1d harmonic oscillator, which is  a potential way to solve Eliasson's problem.\\
\indent To finish the proof of Theorem \ref{maintheorem}  we will follow the scheme given by Eliasson and Kuksin in \cite{EK0}.  As in \cite{EK0} the equation (\ref{maineq}) is rewritten into an autonomous Hamiltonian system in an extended phase space
$\mathcal{P}^2:=\T^n\times\R^n\times\ell^{2,2}\times \ell^{2,2}$,
 with the Hamiltonian function $H=N+\varepsilon P,$ where
 \begin{eqnarray*}
N:=N(\omega) = \sum_{
1\leq j\leq n}
\omega_j  y_j + \sum_{j\geq 1}
(2j-1)z_j\bar{z}_j.
 \end{eqnarray*}
 and $$ {P}(\theta,z,\bar{z})=\int_{\R}V(x,\theta;\omega)(\sum_{j\geq 1}z_jh_j)(\sum_{j\geq 1}\bar{z}_jh_j)dx$$ is quadratic in $(z,\bar{z})$.
 Here the external  parameters are the frequencies $\omega=(\omega_j)_{1\leq j\leq n}\in \Pi:=[0, 2\pi)^n$ and the normal frequencies $\Omega_j=2j-1$ are independent of $\omega$.
 To apply Theorem \ref{KAMtheorem} into the above Hamiltonian we need to check all the assumptions in Theorem \ref{KAMtheorem} are satisfied, and thus finish the proof of Theorem \ref{maintheorem}. In fact we need an improved estimate on $h_n(x)$. For simplicity we define the weighted $L^2$ norm of $h_n(x)$,
\begin{eqnarray*}
|||h_n(x)|||=\Big(\int_{\R}\frac{h_n^2(x)}{(1+\ln(1+x^2))^{2\delta_1}}dx\Big)^{\frac{1}{2}},
\end{eqnarray*}
with $\delta_1>0$.
\begin{lemma}\label{Indecaysection1}
Suppose $h_n(x)$ satisfies (\ref{eigensection1}), then for any $n\geq 1$,
$$|||h_n(x)|||\leq \frac{C_{\delta_1}}{(1+\ln n)^{\delta_1}}, $$
where $C_{\delta_1}$ is a constant depending on $\delta_1$ only.

\end{lemma}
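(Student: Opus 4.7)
The plan is to exploit the fact that $h_n^2$ concentrates around the classical turning points $|x|\sim\sqrt{2n-1}$, where $1+\ln(1+x^2)\sim\ln n$ already provides the desired logarithmic gain; on the complementary region in the interior of the classical zone, the contribution is instead tamed by a pointwise smallness estimate for $h_n$. Concretely, I split the integral at $|x|=\sqrt n$, which is safely inside the classical region for $n$ large.

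For the outer piece $|x|>\sqrt n$, the bound $1+\ln(1+x^2)\geq 1+\ln(1+n)$ combined with $\|h_n\|_{L^2}=1$ immediately gives a contribution of size $(1+\ln n)^{-2\delta_1}$ with no further input. For the inner piece $|x|\leq \sqrt n$, I will invoke the classical Plancherel--Rotach asymptotics for the Hermite functions: since $|x|\leq\sqrt n$ corresponds, in the parametrization $x=\sqrt{4n+2}\cos\phi$, to $|\cos\phi|\leq 1/\sqrt 2$ and hence $\sin\phi\ge 1/\sqrt 2$, the standard interior expansion yields the uniform bound $|h_n(x)|^2\leq C n^{-1/2}$ on $|x|\leq\sqrt n$.

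With this bound the interior contribution is dominated by
\begin{equation*}
\frac{C}{\sqrt n}\int_0^{\sqrt n}\frac{dx}{(1+\ln(1+x^2))^{2\delta_1}},
\end{equation*}
and it remains to show that the last integral is at most $C\sqrt n\,(1+\ln n)^{-2\delta_1}$. I will establish this by splitting $[0,\sqrt n]$ at $x_0:=\sqrt n\,(\ln n)^{-2\delta_1}$: on $[0,x_0]$ the integrand is at most $1$ and the length of the interval supplies the factor $(\ln n)^{-2\delta_1}$; on $[x_0,\sqrt n]$ one has $x^2\geq n\,(\ln n)^{-4\delta_1}$, so $\ln(1+x^2)\geq \tfrac12 \ln n$ for $n$ large, and the denominator is already of size $(\ln n)^{2\delta_1}$. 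Combining the two regimes with the outer estimate yields the claim.

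The only non-routine ingredient is the uniform interior estimate $|h_n|^2\leq C n^{-1/2}$, which is the main obstacle if one wishes to avoid quoting Plancherel--Rotach. A self-contained alternative is to derive the WKB-type bound $|h_n(x)|^2\leq C(2n-1-x^2)^{-1/2}$ in the interior directly from the Hermite ODE $h_n''+(2n-1-x^2)h_n=0$ via a Pr\"ufer-type transformation; this bound also produces the $n^{-1/2}$ factor on $|x|\leq\sqrt n$ and closes the argument in exactly the same way.
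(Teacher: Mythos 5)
Your proof is correct and takes a genuinely different route from the paper's. The paper (following Yajima--Zhang and Titchmarsh) invokes Langer's turning-point asymptotics, writing $h_n$ through $(\lambda_n-x^2)^{-1/4}$ and the Hankel function $H^{(1)}_{1/3}(\zeta)$, substituting $y=x/X$ with $X=\sqrt{\lambda_n}$, and splitting the weighted integral into the three regions $0\le y\le1$, $1\le y\le K$, $y\ge K$, each estimated from a different asymptotic regime of $H^{(1)}_{1/3}$ --- machinery that is uniform across the turning point. Your split at $|x|=\sqrt n$ stays well inside the classical region and sidesteps the turning point entirely: outside you use only $\|h_n\|_{L^2}=1$ together with the monotonicity of the logarithmic weight, and inside only the Plancherel--Rotach interior bound $|h_n(x)|^2\preceq n^{-1/2}$ on $|x|\le\sqrt n$, a much softer input. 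What the paper's route buys is uniformity through the turning point, which the logarithmic weight renders unnecessary here; what your route buys is economy. Two small remarks. First, the parametrization $x=\sqrt{4n+2}\cos\phi$ does not match this paper's normalization (with $Th_n=(2n-1)h_n$ the turning point sits at $|x|=\sqrt{2n-1}$), but since $\sqrt n/\sqrt{2n-1}$ stays bounded away from $1$, $\sin\phi$ is still bounded below on $|x|\le\sqrt n$ and the conclusion is unaffected. Second, the elementary energy identity $\frac{d}{dx}\big[(2n-1-x^2)h_n^2+(h_n')^2\big]=-2x\,h_n^2\le 0$ for $x\ge 0$, together with the explicit values at $0$ and Stirling (which give $G(0):=(2n-1)h_n(0)^2+h_n'(0)^2\asymp\sqrt n$), yields $|h_n(x)|^2\le G(0)/(2n-1-x^2)\preceq\sqrt n/(2n-1-x^2)$; this is weaker than the WKB amplitude $(2n-1-x^2)^{-1/2}$ you wrote near the turning point, but on $|x|\le\sqrt n$ both reduce to $n^{-1/2}$, so your self-contained alternative still closes the argument.
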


\indent In the end of this section we give a fast description on  the recent development in KAM theory. For the KAM results with bounded perturbations see \cite{GY1,KLiang, KP, Ku0, LZ, LY1} for 1d-NLS. For high dimensional NLS see the milestone work by Eliasson and Kuksin \cite{EK}, where they  found and defined a T\"oplitz-Lipschitz property and used it to control the shift of the normal frequencies. See \cite{GXY} and \cite{PX} for recent development in nd-NLS. For nd-beam equations  see \cite{GY2} and \cite{GY3} for the nonlinearity $g(u)$ and see \cite{EGK1} and \cite{EGK2} for more general nonlinearity $g(x,u)$. Adapting the technics in \cite{EK} and \cite{GT}, Gr\'{e}bert and Paturel built a KAM for the Klein  Gordon equation on $S^d$ in \cite{GP}.  We remark that an earlier results  for NLW and NLS on compact Lie groups via Nash Moser technics can be found in \cite{BCP} (see also \cite{BP}), in which Berti, Corsi and Procesi proved the existence of quasi-periodic solutions without linear stability.  \\
\indent For the unbounded perturbations, the first KAM results have been obtained by Kuksin \cite{Ku1}-\cite{Ku2} for KdV with analytic perturbations(also see Kappeler  and
P\"oschel \cite{KaPo}).  See  \cite{LiuYuan} and \cite{ZGY} for recent progress for 1d derivative nonlinear Schr\"odinger equation(DNLS), Benjiamin-Ono equation and the reversible DNLS equation.

\noindent {\it Plan of the proof of Theorem \ref{maintheorem}}.  In Sect. \ref{S2} we give an abstract KAM
theorem(Thm. \ref{KAMtheorem}) which will be used to prove Thm. \ref{KAMapp}, a Hamiltonian formulation of Thm. \ref{maintheorem}, in Sect. \ref{S4}. In Sect. \ref{S3} we prove the weighted $L^2$ estimates on  the Hermite basis. The KAM proof of Thm. \ref{KAMtheorem} is deferred in Sect. \ref{S5} while the measure estimates are presented  in Sect. \ref{S6}. We put two technical inequalities in the Appendix.

\noindent {\it Notations}.   {$\la\cdot ,\cdot \ra$ is the standard scalar product in $\R^d$ or $\R^{\infty}$. $\|\cdot \|$ is an operator-norm or $\ell^2$-norm. $|\cdot |$ will in general denote a supremum norm with a notable exception. For $l=(l_1,l_2,\dots,l_k,\dots)\in \mathbb{Z}^\infty$ so that only a finite number of coordinates are
 nonzero, we denote by $|l|=\sum\limits_{j=1}^{\infty}|l_j|$  its length, $\langle l \rangle =\max\{1,\vert
 \sum\limits_{j=1}^{\infty}jl_j\vert\}$.  We use the notations $\Z_{+}=\{1,2,\cdots\}$ and $\N=\{0,1,2,\cdots\}$.\\
\indent The notation $``\preceq"$ used below  means $\leq $ modulo a multiplicative constant that, unless otherwise specified, depends on $n$ only.\\
\indent  We set $\mathcal{Z}=\{(k,l)\neq 0,\vert l \vert \leq 2 \} \subset
 \mathbb{Z}^n\times \mathbb{Z}^{\infty}$. Denote by $\Delta_{\xi
\eta} $ the difference operator in the variable $\xi$,
$ \Delta_{\xi\eta}f=f(\cdot,\xi)-f(\cdot,\eta)$, where $f$ is a real function. \\
\indent We denote
$\ell_{\C}^{2,p}$ the Hilbert space of all complex sequences $w=(w_j)_{j\geq 1}$
with
$\|w\|_{p}^2 = \sum\limits_{j\geq 1} j^p|w_j|^2<\infty$. We denote $\ell_\infty^{q}$  the space of all real(complex) sequences with finite norm
$|w|_{\ell_\infty^{q}}=\sup\limits_{j\geq 1}|w_j|j^q<\infty$, where  $q\in \R$.\\
\indent Following \cite{Pos}, we use $\|\cdot\|^{*}$(respectively $\la \cdot \ra^{*}$) stands either for $\|\cdot \|$ or $\|\cdot \|^{\mathfrak{L}}$(respectively $\la\cdot \ra$ or $\la \cdot \ra^{\mathfrak{L}}$) and $\|\cdot \|^{\lambda}$ stands for $\|\cdot \|+\lambda\|\cdot \|^{\mathfrak{L}}$. The notation Meas stands for the Lebesgue measure in $\R^n$. }

\section{KAM Theorem}\label{S2}
\subsection{KAM theorem}
\noindent Following the exposition in \cite{GT},  \cite{KLiang} and \cite{Pos}, we
consider small perturbations of a family of infinite-dimensional integrable Hamiltonians $N(y,u,v;\xi)$
with parameter $\xi$ in the normal form
 \begin{eqnarray}\label{unperturbedhamiltonian}
N = \sum_{
1\leq j\leq n}
\omega_j (\xi)y_j +\frac{1}{2}\sum_{j\geq 1}
\Omega_j(\xi)(u_j^2+v_j^2),
 \end{eqnarray}
on the phase space
$\mathcal{M}^p : = \T^n\times \R^n\times \ell^{2,p} \times \ell^{2,p}$
with coordinates $(\theta,y,u,v)$. The `internal' frequencies, $\omega=(\omega_j)_{1\leq j\leq n}$, as well as the `external' ones, $\Omega=(\Omega_j)_{j\geq 1}$, are real valued and depend on the parameter $\xi\in \Pi\subset \R^n$ and $\Pi$ is a compact subset of $\R^n$ of positive Lebesgue measure. The symplectic structure on $\mathcal{M}^p$ is the standard one given by
$\sum\limits_{1\leq j\leq n}d\theta_j\wedge dy_j+\sum\limits_{j\geq 1}du_j\wedge dv_j$. \\
\indent The Hamiltonian equations for motion of $N$ are therefore
$$\dot{\theta}=\omega(\xi), \qquad \dot{y}=0, \qquad \dot{u}=\Omega(\xi)v, \qquad \dot{v}=-\Omega(\xi)u,$$
where for any $j\geq 1$, $(\Omega(\xi)u)_j=\Omega_j u_j$. Hence, for any parameter $\xi\in \Pi$, on the n-dimensional invariant torus,
$$\T_0=\T^n\times \{0\}\times \{0\}\times\{0\},$$
the flow is rotational with internal frequencies $\omega(\xi)$. In the normal space, described by the $(u,v)$ coordinates, we have an elliptic equilibrium at the origin, whose frequencies are $\Omega(\xi)=(\Omega_j)_{j\geq 1}$. Hence, for any $\xi\in \Pi$, $\T_0$ is an invariant, rotational, linearly stable torus for the Hamiltonian $N$.\\
 \indent Our aim is to prove the persistence of this torus under small perturbations $N+P$ of the integrable Hamiltonian $N$ for a large Cantor set of parameter values $\xi$. To this end we make assumptions on the frequencies of the unperturbed Hamiltonian $N$ and on the perturbation $P$.\\
  \indent We need some notations for simplification. In the sequel, we use the distance
\begin{eqnarray*}
\| \Omega-\Omega'\|_{2\beta,\Pi}=\sup_{\xi\in\Pi}\sup_{j\geq1}(1+\ln j)^{2\beta}\vert\Omega_j(\xi)
-\Omega_j'(\xi)\vert,
\end{eqnarray*}
and the semi-norm,
$$ \|\Omega\|_{2\beta,\Pi}^{\mathfrak{L}}=\sup_{\substack{\xi,\eta\in\Pi\\\xi\neq\eta}}\sup_{j\geq1}
\frac{(1+\ln j)^{2\beta}\vert\triangle_{\xi\eta}\Omega_j\vert}{\vert\xi-\eta\vert}.
$$
\noindent{\bf Assumption $\mathcal{A}$} (Frequencies): \\
\textbf{(A1)} The map $\xi\mapsto\omega(\xi)$ between $\Pi$ and its image $\omega(\Pi)$ is a homeomorphism which, together with its inverse,  is Lipschitz continuous. \\
\textbf{(A2)} There exists a real sequence $(\overline{\Omega}_j)_{j\geq 1},$ independent of $\xi\in \Pi,$ of the form $\overline{\Omega}_j=a_{1}j+a_2$ with $a_1,a_2\in\R$ and $a_1\neq 0$, so that $ \xi\longmapsto (\Omega_j-\overline{\Omega}_j)_{j\geq 1}$ is a Lipschitz continuous map on $\Pi$ with values in $\ell_\infty^{-\delta}(\delta<0)$. More clearly, for $\xi\in \Pi,$ $|\Omega-\overline{\Omega}|_{\ell_\infty^{-\delta}}\leq M_1$ with $M_1>0.$\\
\textbf{(A3)} For all $(k,l)\in \mathcal{Z}$,
$$ Meas(\{\xi:k\cdot\omega(\xi)+l\cdot\Omega(\xi)=0\})=0.$$
and for all $\xi\in \Pi$,
$$ l\cdot\Omega(\xi)\neq0,\qquad \forall\  1\leq\vert l \vert \leq 2.$$
\indent The second set of assumptions concerns the perturbing Hamiltonian $P$ and its vector field, $X_{P}=
(\partial_yP,-\partial_{\theta}P, \partial_{v}P, -\partial_uP)$. We use the notation $i_{\xi} X_{P} $ for $X_{P}$ evaluated at $\xi$. Finally, we denote by $\mathcal{M}_{\C}^n$ the complexification of the phase space $\mathcal{M}^{p}$, $\mathcal{M}_{\C}^p = (\C/2\pi \Z)^n \times \C^n \times \ell^{2,p}_{\C}\times \ell^{2,p}_{\C}$.  Note that at each point of $\mathcal{M}_{\C}^p$, the tangent space is given by
$$\mathcal{P}_{\C}^{p} : = \C^{n}\times \C^{n}\times \ell_{\C}^{2,p}\times \ell_{\C}^{2,p}. $$
To state the assumptions about the perturbation we need to introduce some domains and norms.
For $s,r>0$, we define the complex neighborhood of $\T_0-$neighborhoods
$$
D(s,r)=\{|\Im \theta|<s\}\times \{|y|<r^2\}\times \{\|u\|_p+\|v\|_p<r\}\subset \mathcal{M}_{\C}^{p}.
$$
Here, for $a$ in $\R^n$ or $\C^n$, $|a|=\max_{j}|a_j|$ and $p\geq 2$. Let $r>0$, then for $W=(X,Y,U,V)$ in $\mathcal{P}_{\C}^p$ we denote
$$
\vert W\vert_r=\vert X \vert+\frac{1}{r^2}\vert Y\vert+\frac{1}{r}(\|U\|_p+\|V\|_p).$$
We then define the norms
$$\|P\|_{D(s,r)}:=\sup_{D(s,r)\times\Pi}\vert P\vert,\qquad\|P\|_{D(s,r)}^{\mathfrak{L}}:=\sup_{\substack{\xi,\eta\in\Pi\\ \xi\neq\eta}}\sup_{D(s,r)}\frac
{\vert\triangle_{\xi\eta} P \vert}{\vert \xi-\eta \vert},$$
 and we define the semi-norms
$$ \|X_P\|_{r,D(s,r)}:=\sup_{D(s,r)\times\Pi}\vert X_P\vert_r,\qquad \|X_P\|_{r,D(s,r)}^{\mathfrak{L}}:=\sup_{\substack{\xi,\eta\in\Pi\\ \xi\neq\eta}}\sup_{D(s,r)}\frac
{\vert\triangle_{\xi\eta} X_P\vert_r}{\vert \xi-\eta \vert}.$$
\indent In the sequel, we will often work in the complex coordinates
$z = \frac{1}{\sqrt{2}}(u-{\rm i} v),  z = \frac{1}{\sqrt{2}}(u+{\rm i}v)$.
Notice that this is not a canonical change of variables and in the variables
$(\theta, y, z, \bar{z}) \in \mathcal{M}_{\C}^p$ the symplectic structure reads
$\sum\limits_{1\leq j\leq  n}d \theta_j \wedge dy_j + {\rm i}\sum\limits_{j\geq 1} d z_j\wedge d\bar{z}_j $,
and the Hamiltonian in normal form is
 \begin{eqnarray}\label{newunperturbedhamiltonian}
N = \sum_{
1\leq j\leq n}
\omega_j (\xi)y_j +\sum_{j\geq 1}
\Omega_j(\xi)z_j\bar{z}_j,
 \end{eqnarray}

\noindent{\bf Assumption $\mathcal{B}$} (Perturbation):\\
\textbf{(B1)} We assume that there exist $s,r>0$ so that
$$ X_P:\qquad D(s,r)\times\Pi\longmapsto\mathcal{P}_{\C}^{p}.$$
Moreover $i_{\xi}X_P(\cdot,\xi)$ is analytic in $D(s,r)$ for each $\xi\in \Pi$.  $i_{w}P$ and $i_{w}X_{P}$ are uniformly Lipschitz on $\Pi$ for each $w\in D(s,r)$. \\
\indent Similar as \cite{GT}, we denote $\Gamma_{r,D(s,r)}^{\beta}$ as the following:
  Let $\beta>0$, we say that $P\in\Gamma_{r,D(s,r)}^{\beta}$ if $\langle P\rangle_{r,D(s,r)}+\langle P\rangle_{r,D(s,r)}^{\mathfrak{L}}<\infty$ where the norm $\langle\cdot\rangle_{r,D(s,r)}$ is defined by the conditions
\begin{eqnarray*}
\|P\|_{D(s,r)}&\leq& r^2\langle P \rangle_{r,D(s,r)},\\
\max_{1\leq j\leq n}\Big\|\frac{\partial P}{\partial y_j}\Big\|_{D(s,r)} &\leq& \langle P \rangle_{r,D(s,r)},\\
\Big\|\frac{\partial P}{\partial\omega_j}\Big\|_{D(s,r)} &\leq& \frac{r}{(1+\ln j)^{\beta}}\langle P
\rangle_{r,D(s,r)},\qquad \forall\ j\geq1\quad and \quad \omega_j=z_j,\overline{z}_j,\\
\Big\|\frac{\partial^{2}P}{\partial\omega_j\partial\omega_l}\Big\|_{D(s,r)} &\leq& \frac{1}{(1+\ln j)^{\beta}(1+\ln l)^{\beta}}\langle P\rangle_{r,D(s,r)},\qquad\forall\ j,l\geq 1\quad and \quad\omega_j=z_j,\overline{z}_j,
\end{eqnarray*}
and the semi-norm $\langle\cdot\rangle_{r,D(s,r)}^{\mathfrak{L}}$ is defined by the conditions
\begin{eqnarray*}
\|P\|_{D(s,r)}^{\mathfrak{L}}&\leq& r^2\langle P \rangle_{r,D(s,r)}^{\mathfrak{L}},\\
\max_{1\leq j\leq n}\Big\|\frac{\partial P}{\partial y_j}\Big\|_{D(s,r)}^{\mathfrak{L}} &\leq& \langle P \rangle_{r,D(s,r)}^{\mathfrak{L}},\\
\Big\|\frac{\partial P}{\partial\omega_j}\Big\|_{D(s,r)}^{\mathfrak{L}} &\leq& \frac{r}{(1+\ln j)^{\beta}}\langle P
\rangle_{r,D(s,r)}^{\mathfrak{L}},\qquad \forall\ j\geq1\quad and \quad \omega_j=z_j,\overline{z}_j,\\
\Big\|\frac{\partial^{2}P}{\partial\omega_j\partial\omega_l}\Big\|_{D(s,r)}^{\mathfrak{L}} &\leq& \frac{1}{(1+\ln j)^{\beta}(1+\ln l)^{\beta}}\langle P\rangle_{r,D(s,r)}^{\mathfrak{L}},\qquad\forall\ j,l\geq 1\quad and \quad\omega_j=z_j,\overline{z}_j.
\end{eqnarray*}
\textbf{(B2) }$P\in\Gamma_{r,D(s,r)}^{\beta}$ for some $\beta = \iota\tau\geq \iota(n+2)$ where $\iota\geq2$.\\
\begin{remark}
In the application to 1d quantum harmonic oscillator we will choose $\beta\geq 2(n+2)$, which is NOT the best choice.  But we have no intent to obtain the optimal one for $\beta$.
\end{remark}
We set
$
|\omega|_{\Pi}\leq M, \ | \omega^{-1}| _{\Pi}^{\mathfrak{L}}\leq L
$
and
\begin{eqnarray}\label{definitionM}
| \omega| _{\Pi}^{\mathfrak{L}}+\|\Omega\|_{2\beta,\Pi}^{\mathfrak{L}}\leq M,
\end{eqnarray}
where $ |\omega| _{\Pi}^{\mathfrak{L}}=\sup\limits_{\substack{\xi,\eta\in\Pi\\ \xi\neq\eta}}\max\limits_{1\leq k\leq n}\frac{\vert\triangle_{\xi\eta}\omega_k\vert}{\vert\xi-\eta\vert}$.\\
\begin{Theorem}\label{KAMtheorem}
({\rm KAM})Suppose that $N$ is a family of Hamiltonian of the form (\ref{newunperturbedhamiltonian}) defined on the phase space $\mathcal{M}^p$ with $p\geq 2$ depending on parameters $\xi\in\Pi$ so that Assumption $\mathcal{A}$ is  satisfied. Then there exist $\gamma>0$ and $s>0$ so that every perturbation $H=N+P$ of $N$ which satisfies Assumption $\mathcal{B}$ and the smallness condition
$$
\varepsilon=(\|X_P\|_{r,D(s,r)}+\langle P\rangle_{r,D(s,r)})+
\frac{\alpha}{M}(\|X_P\|_{r,D(s,r)}^{\mathfrak{L}}+\langle P\rangle_{r,D(s,r)}^{\mathfrak{L}})\leq\gamma\alpha^5,
$$
for some $r>0$ and $0<\alpha\leq1$, the following holds.\\
There exist\\
(i) a Cantor set $\Pi_{\alpha}\subset\Pi$ with $Meas(\Pi\backslash\Pi_{\alpha})\mapsto0$ as $\alpha\mapsto0$;\\
(ii) a Lipschitz family of real analytic,symplectic coordinates transformations
$$ \Phi:D(s/2,r/2)\times\Pi_{\alpha}\longmapsto D(s,r);$$
(iii) a Lipschitz family of new normal form
$$ N^{*}=\sum_{j=1}^{n}\omega_j^{*}(\xi)y_j+\sum_{j\geq1}\Omega_j^{*}(\xi)z_j\bar{z}_j$$
defined on $D(s/2,r/2)\times\Pi_{\alpha}$ such that
$$ H\circ\Phi=N^{*}+P^{*},$$
where $P^{*}$ is analytic on $D(s/2,r/2)$ and globally of order 3 at $\T_0$. That is the Taylor expansion of $P^{*}$ only contains monomials $y^mz^q\bar{z}^{\bar{q}}$ with $2\vert m \vert+\vert q+\bar{q}\vert\geq 3$. Moreover each symplectic coordinates transformation is close to the identity
\begin{eqnarray}\label{thmestimate1}
\|\Phi-Id\|_{r,D(s/2,r/2)}\leq c\varepsilon^{1/2},
\end{eqnarray}
and the new frequencies are close to the original ones
\begin{eqnarray}\label{thmestimate2}
 |\omega^{*}-\omega|_{\Pi_{\alpha}}+\|\Omega^{*}-\Omega\|_{2\beta,\Pi_{\alpha}}
\leq c\varepsilon,
\end{eqnarray}
and the new frequencies satisfy a non resonance condition
\begin{eqnarray}\label{thmestimate3}
 |k\cdot\omega^{*}(\xi)+l\cdot\Omega^{*}(\xi)|\geq\frac{\alpha}{2}\cdot\frac
{\langle l \rangle}{exp(|k|^{1/{\iota}})},\qquad\ \iota\geq2,\  ( k,l ) \in \mathcal{Z},\ \xi\in\Pi_{\alpha}.
\end{eqnarray}
\end{Theorem}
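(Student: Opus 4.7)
The plan is to carry out a Newton/KAM iteration in the style of P\"oschel, as adapted in Gr\'ebert--Thomann, but with the small-divisor threshold and class-norm on the perturbation rebuilt around logarithmic decay rather than polynomial decay. At the $\nu$-th step I would start from a Hamiltonian $H_\nu=N_\nu+P_\nu$ with normal form $N_\nu = \langle \omega_\nu(\xi),y\rangle+\sum_j \Omega_{\nu,j}(\xi)z_j\bar z_j$ defined on a domain $D(s_\nu,r_\nu)$, and construct $F_\nu$ via the homological equation $\{N_\nu,F_\nu\} + R_\nu = \widehat N_\nu$, where $R_\nu$ is the truncation of $P_\nu$ at degrees $2|m|+|q+\bar q|\le 2$ and $\widehat N_\nu$ absorbs the resonant part (giving the frequency shift $\widehat\Omega_{\nu,j}$). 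The time-1 flow $\Phi_\nu$ of $X_{F_\nu}$ is the new symplectic change of variables, and $H_{\nu+1}:=H_\nu\circ\Phi_\nu$ will have normal form $N_{\nu+1}=N_\nu+\widehat N_\nu$ and a perturbation $P_{\nu+1}$ that is superlinearly smaller in the class $\Gamma^\beta$.

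The crucial input to the homological equation is the small-divisor bound
\[
|\langle k,\omega_\nu(\xi)\rangle+\langle l,\Omega_\nu(\xi)\rangle|\ge \frac{\alpha_\nu\langle l\rangle}{\exp(|k|^{\tau/\beta})},\qquad (k,l)\in\mathcal{Z},
\]
which is imposed on a Cantor set $\Pi_\nu$. Using Assumption $\mathcal{A}$ and the running estimate $|\widehat\Omega_{\nu,j}|\preceq \alpha_\nu(1+\ln j)^{-2\beta}$, I would run the measure argument sketched in the introduction: for each $(k,j,b)$ with $1\le b\le c|k|$, split between $j\ge j_0:=\exp\{(c\Delta(|k|))^{1/(2\beta)}\}$, where the unperturbed estimate $|\langle k,\xi\rangle+2b|\ge 2\alpha b/\Delta(|k|)$ dominates the $O(\alpha(1+\ln j)^{-2\beta})$ frequency shift, and $j\le j_0$, where the remaining bad set has measure $\preceq \alpha|k|^2\Delta_1(|k|)^{-1}\exp\{(c\Delta(|k|))^{1/(2\beta)}\}$. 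Choosing $\Delta(l)=l^\tau$ and $\Delta_1(l)=\exp(l^{\tau/\beta})$ with $\beta\ge 2\tau\ge 2(n+2)$ makes the total excluded measure summable in $k$ and tending to zero with $\alpha$, so the intersection $\Pi_\alpha=\bigcap_\nu \Pi_\nu$ has $\mathrm{Meas}(\Pi\setminus\Pi_\alpha)\to 0$ as $\alpha\to 0$.

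The main technical obstacle is preserving Assumption $\mathcal{B}$ across the iteration: one must show $F_\nu\in\Gamma^\beta$ with the refined bounds
\[
\Bigl\|\tfrac{\partial F_\nu}{\partial \omega_j}\Bigr\|^*_{D(s_\nu,r_\nu)} \preceq \frac{r_\nu}{j(1+\ln j)^\beta}\langle F_\nu\rangle^{+,*},\qquad
\Bigl\|\tfrac{\partial^2 F_\nu}{\partial\omega_j\partial\omega_l}\Bigr\|^*_{D(s_\nu,r_\nu)} \preceq \frac{\langle F_\nu\rangle^{+,*}}{(1+\ln j)^\beta(1+\ln l)^\beta(1+|j-l|)},
\]
so that the composition lemma gives $P_{\nu+1}\in\Gamma^\beta$ again. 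This is where the logarithmic weights interact delicately with the exponential small divisors: one needs a T\"oplitz--Lipschitz style lemma (to be stated separately as a Lemma on $X_{F_\nu}$, compare the introductory discussion after equation (1.10)) showing that the divisor $\exp(|k|^{\tau/\beta})$ eats the factor $(1+|j-l|)$ when $|k|$ is large, and that the additional factor $1/j$ in $\partial_{\omega_j}F_\nu$ is produced by the mean-value inequality in $j$ applied to the first derivative estimate. The choice $\beta=\iota\tau\ge 2(n+2)$ with $\iota\ge 2$ is precisely what is required to carry this through.

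With the KAM step in hand, the remainder is standard: pick geometric sequences $s_\nu\searrow s/2$, $r_\nu\searrow r/2$, $\alpha_\nu=\alpha/2^\nu$, and $\varepsilon_{\nu+1}\le c\varepsilon_\nu^{3/2}$, and verify inductively the smallness condition $\varepsilon_\nu\le\gamma\alpha_\nu^5$ given the initial $\varepsilon\le\gamma\alpha^5$. The composition $\Phi=\lim\Phi_0\circ\cdots\circ\Phi_\nu$ converges on $D(s/2,r/2)\times\Pi_\alpha$, and the telescoping bounds give (\ref{thmestimate1}) and (\ref{thmestimate2}). The non-resonance (\ref{thmestimate3}) is inherited from the intersected Cantor set $\Pi_\alpha$ together with the convergence $\omega_\nu\to\omega^*$, $\Omega_\nu\to\Omega^*$ in the $\|\cdot\|_{2\beta,\Pi_\alpha}$ norm, so that the limiting divisors still satisfy the exponential lower bound with constant $\alpha/2$.
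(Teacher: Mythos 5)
Your proposal follows the same overall scheme as the paper (P\"oschel/Gr\'ebert--Thomann Newton iteration with the logarithmic class $\Gamma^\beta_{r,D(s,r)}$ and the refined class $\Gamma^{\beta,+}$ for the generating functions, an exponential small-divisor condition $\alpha\langle l\rangle/\exp(|k|^{\tau/\beta})$, and a measure argument driven by the $j_0=\exp\{(c\Delta(|k|))^{1/(2\beta)}\}$ splitting), so the architecture is right. However, there are a few concrete gaps and mis-attributions.

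First, your choice $\alpha_\nu=\alpha/2^\nu$ cannot give the non-resonance (\ref{thmestimate3}) with the claimed constant $\alpha/2$: if $\alpha_\nu\to 0$, the limiting bound is vacuous. The paper takes $\alpha_\nu=\frac{\alpha_0}{2}(1+2^{-\nu})$, which decreases to $\alpha_0/2>0$; the corresponding positivity $\alpha_\nu-\alpha_{\nu+1}\geq c_1 K_\nu e^{K_\nu^{\tau/\beta}}\varepsilon_\nu$ is precisely what must be verified in the iterative lemma to allow removing the new resonance zones only for $|k|>K_\nu$. Second, you attribute the factors $1/j$ and $1/(1+|j-l|)$ in the bounds on $\partial_{\omega_j}F$ and $\partial^2_{\omega_j\omega_l}F$ to a ``mean-value inequality in $j$'' and to the divisor $\exp(|k|^{\tau/\beta})$ ``eating'' the oscillation; in the paper these factors come directly from the Melnikov factor $\langle l\rangle$ in the small-divisor condition: for $l=q_j$ one has $\langle l\rangle=j$, and for $l=q_j-q_l$ one has $\langle l\rangle=|j-l|$, which upon inversion and summation over $k$ (at the price of the exponential loss) produces exactly the $1/(1+j)$ and $1/(1+|j-l|)$ weights in Lemma~\ref{fsemi}. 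Third, and most importantly given the novelty of the logarithmic setting, you gloss over the exponential loss $e^{c(\sigma^{-1})^{t_1}}$ with $t_1=\tau/(\beta-\tau)=1/(\iota-1)$ incurred in solving the homological equation (Lemma~\ref{XFesti}). Simply ``picking geometric $s_\nu\searrow s/2$'' does not automatically tame this: the paper's choice $\sigma_\nu=8\cdot700^{\iota-1}|\ln\varepsilon_\nu|^{-(\iota-1)}$ is tuned so that $e^{c(\sigma_\nu^{-1})^{t_1}}=\varepsilon_\nu^{-c'}$, converting the exponential loss into a polynomial one that the super-exponential gain $\varepsilon_{\nu+1}=c_1\varepsilon_\nu^{133/100}/\alpha_\nu^{1/3}$ can absorb, while still keeping $\sum\sigma_\nu\leq s/2$. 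This balance is the genuinely new point versus the polynomial-decay case and needs to be stated explicitly; your $\varepsilon_{\nu+1}\le c\varepsilon_\nu^{3/2}$ does not account for the exponential factor and the $\alpha_\nu^{-1/3}$ term.
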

 {\begin{remark}
As a consequence, for each $\xi\in\Pi_{\alpha}$ the torus $\Phi(\T_0)$ is  invariant under the flow of the perturbed Hamiltonian $H=N+P$ and all these tori are linearly stable.
\end{remark}}

\section{Estimates on eigenfunctions in a weighted $L^2$ norm}\label{S3}
\noindent In this section we will prove Lemma \ref{Indecaysection1}.  A well-known fact is that
$h_n(x)={(n! 2^n \pi^{\frac12})^{-\frac12}}{e^{-\frac12 x^2} H_n(x)}$ where $H_n(x)$ is the Hermite polynomial of degree $n$ and $h_n(x)$ is an even or odd function of $x$ according to whether $n$ is odd or even(see Titchmarsh \cite{T1}).  The proof of Lemma \ref{Indecaysection1} is based upon Langer's turning point theory as presented in Chapter 22.27 of \cite{T2}(see \cite{Yajima}).
 {For simplicity we define the weighted $L^2$ norms of $h_n(x)$ on $\R$ and $\R_{\pm}$, which are
$$
|||h_n(x)|||=\Big(\int_{\R}\frac{h_n^2(x)}{(1+\ln(1+x^2))^{2\delta_1}}dx\Big)^{\frac{1}{2}},
$$
and
$$|||h_n(x)|||_{\pm}=\Big(\int_{\R_\pm}\frac{h_n^2(x)}{(1+\ln(1+x^2))^{2\delta_1}}dx\Big)^{\frac{1}{2}}
$$
with $\delta_1>0$.}\\
\indent From the symmetry
$|||h_n(x)|||^2= 2|||h_n(x)|||^2_{+}$,
and thus we only need to estimate $|||h_n(x)|||_{+}$. In the following we assume $n$ be sufficiently large.
As in \cite{Yajima},
\begin{eqnarray*}\label{gujidecay1}
h_n(x)&=&(\lambda_n-x^2)^{-\frac{1}{4}}(\frac{\pi\zeta}{2})^{\frac{1}{2}}H_{\frac{1}{3}}^{(1)}(\zeta)+(\lambda_n-x^2)^{-\frac{1}{4}}(\frac{\pi\zeta}{2})^{\frac{1}{2}}H_{\frac{1}{3}}^{(1)}(\zeta)\mathcal{O}(\frac{1}{\lambda_n})\\
&:=& \psi_1(x) +\psi_2(x),
\end{eqnarray*}
where $\zeta(x)=\int_X^x\sqrt{\lambda_n-t^2}dt$ with $X^2=\lambda_n$.
We only need to estimate $\psi_1(x)$ since the estimate for $\psi_2(x)$ is even better.
Let
\begin{eqnarray*}\label{}
Q(y)=\left\{
\begin{array}{cc}
-\int_y^1\sqrt{1-s^2}ds,&{\rm if}\ y<1,\\
{\rm i}\int_1^y\sqrt{s^2-1}ds, &{\rm if}\ y>1.
\end{array}
\right.
\end{eqnarray*}
We have $\zeta(x)=\lambda_n Q(y)$.
By Lemma 2.2 in \cite{Yajima} it holds that for any $K>1$
\begin{eqnarray*}\label{}
Q(y)&\sim&-(1-y)^{\frac{3}{2}},\ {\rm for}\ 0\leq y\leq1,\\
-{\rm i}Q(y)&\sim&(y-1)^{\frac{3}{2}},  \quad{\rm for}\  1\leq y\leq K,\\
-{\rm i}Q(y)&\sim & y^2,\ \quad \quad \quad \  {\rm for}\  y\geq K.
\end{eqnarray*}
 Recall that $H_{\frac{1}{3}}^{(1)}(\zeta)$ satisfies the following(\cite{T1}):\\
(1) when $\zeta=-z<0,$ $H_{\frac{1}{3}}^{(1)}(\zeta)=\frac{2}{\sqrt{3}}e^{-\frac{1}{6}\pi {\rm i}}(J_{\frac{1}{3}}(z)+J_{-\frac{1}{3}}(z))$ and
\begin{eqnarray}\label{realzeta}
\zeta^{\frac12}H_{\frac{1}{3}}^{(1)}(\zeta)=\left\{
\begin{array}{cc}
2^{\frac32}\pi^{-\frac{1}{2}}e^{\frac{1}{3}\pi {\rm i}}(\cos(z-\pi/4)+\mathcal{O}(z^{-1})),&z\rightarrow\infty,\\
2^{\frac23}3^{-\frac12}{\Gamma(2/3)}^{-1}e^{\frac{1}{3}\pi {\rm i}}z^{\frac16}(1+\mathcal{O}(z)),&z\rightarrow 0,
\end{array}
\right.
\end{eqnarray}
(2) when $\zeta={\rm i}w$ and $w\geq 0$, $H_{\frac{1}{3}}^{(1)}(\zeta)=\frac{2}{\pi}e^{-\frac{2}{3}\pi {\rm i}}K_{\frac{1}{3}}(w)$ and
\begin{eqnarray}\label{imagezeta}
\zeta^{\frac12}H_{\frac{1}{3}}^{(1)}(\zeta)=\left\{
\begin{array}{cc}
\mathcal{O}(e^{-w}),&w\rightarrow\infty,\\
2^{\frac13}e^{-\frac{1}{6}\pi }\pi^{-1}\Gamma(1/3)w^{\frac16}+\mathcal{O}(w^{\frac32})),&w\rightarrow 0.
\end{array}
\right.
\end{eqnarray}
If $n$ is large, then
\begin{eqnarray*}\label{}
|||h_n|||^2_{+}&\leq & 2\int_{0}^{+\infty}\frac{|\psi_1(x)|^2}{(1+\ln(1+x^2))^{2\delta_1}}dx+2\int_{0}^{+\infty}\frac{|\psi_2(x)|^2}{(1+\ln(1+x^2))^{2\delta_1}}dx\\
&\leq  &C\int_{0}^{+\infty}\frac{|\zeta^{\frac12}H_{\frac{1}{3}}^{(1)}(\zeta)|^2}{|1-y^2|^{\frac{1}{2}}(1+\ln(1+y^2X^2))^{2\delta_1}}dy.
\end{eqnarray*}
Lemma \ref{Indecaysection1} is a direct corollary from the following lemma.
\begin{lemma}\label{predecay}
There exists $C>0$ such that for large $n$,
\begin{eqnarray}\label{l2norm}\int_{0}^{+\infty}\frac{|\zeta^{\frac12}H_{\frac{1}{3}}^{(1)}(\zeta)|^2}{|1-y^2|^{\frac{1}{2}}(1+\ln(1+y^2X^2))^{2\delta_1}}dy\leq\frac{C\cdot 2^{2\delta_1}}{(1+\ln n)^{2\delta_1}}.
\end{eqnarray}
\end{lemma}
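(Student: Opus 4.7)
The plan is to split $[0,\infty)$ into three regions dictated by the behavior of $\zeta^{1/2}H^{(1)}_{1/3}(\zeta)$ in the expansions (\ref{realzeta}) and (\ref{imagezeta}): an oscillatory region $y\in[0,1-\lambda_n^{-2/3}]$ well inside the turning point; an Airy transition region of width $\mathcal{O}(\lambda_n^{-2/3})$ around $y=1$; and an exponentially decaying region $y\ge 1+\lambda_n^{-2/3}$. In each of the three, $|\zeta^{1/2}H^{(1)}_{1/3}(\zeta)|^2$ will be controlled by a constant or by $e^{-2w}$, and the logarithmic weight $(1+\ln(1+y^2X^2))^{-2\delta_1}$, with $X^2=\lambda_n=2n-1$, is responsible for producing the $(1+\ln n)^{-2\delta_1}$ factor.

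In the oscillatory region, the first line of (\ref{realzeta}) yields $|\zeta^{1/2}H^{(1)}_{1/3}(\zeta)|^2\preceq 1$ as soon as $z=\lambda_n|Q(y)|\gtrsim 1$, i.e.\ $1-y\gg\lambda_n^{-2/3}$. I would subdivide this region at $y=1/X$ and $y=1/2$. On $[0,1/X]$ the logarithmic factor is $\mathcal{O}(1)$ and $(1-y^2)^{-1/2}\le\sqrt{2}$, giving an $\mathcal{O}(X^{-1})$ contribution. On $[1/X,1/2]$ the factor $(1-y^2)^{-1/2}$ is bounded, so the substitution $u=yX$ turns the integral into $X^{-1}\int_{1}^{X/2}(1+\ln(1+u^2))^{-2\delta_1}\,du$; together with the elementary estimate $\int_1^T(1+\ln u)^{-2\delta_1}\,du\preceq T(1+\ln T)^{-2\delta_1}$ this yields the decisive $(1+\ln X)^{-2\delta_1}\sim(\ln n)^{-2\delta_1}$ bound. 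On $[1/2,1-\lambda_n^{-2/3}]$ the weight is uniformly of order $(\ln n)^{-2\delta_1}$ since $yX\sim X$, and the $(1-y)^{-1/2}$ singularity is integrable on that interval.

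In the Airy transition region, the second lines of (\ref{realzeta}) and (\ref{imagezeta}) give $|\zeta^{1/2}H^{(1)}_{1/3}(\zeta)|=\mathcal{O}(z^{1/6}+w^{1/6})=\mathcal{O}(1)$; the weight is again $\sim(\ln n)^{-2\delta_1}$, while $\int|1-y^2|^{-1/2}\,dy$ over an interval of length $\mathcal{O}(\lambda_n^{-2/3})$ produces $\mathcal{O}(\lambda_n^{-1/3})$. In the exponential region, the first line of (\ref{imagezeta}) combined with the intermediate asymptotics gives $|\zeta^{1/2}H^{(1)}_{1/3}(\zeta)|^2\preceq e^{-2w}$ with $w\sim\lambda_n(y-1)^{3/2}$ for $y\le K$ and $w\sim\lambda_n y^2$ for $y\ge K$; the change of variable $t=\lambda_n(y-1)^{3/2}$ bounds the near-turning-point piece by $\lambda_n^{-1/3}(\ln n)^{-2\delta_1}\int_0^\infty t^{-1/3}e^{-2t}\,dt$, and the Gaussian-type tail $y\ge K$ is negligible. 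Summing the three pieces and absorbing shifts of the form $1+\ln(1+y^2X^2)$ versus $\ln X$ into the factor $2^{2\delta_1}$ then gives the claimed bound.

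The main technical obstacle is the oscillatory middle subregion $y\in[1/X,1/2]$, where $h_n^2$ carries essentially uniform mass of order $X^{-1}$ and a naive bound $\int_0^{1/2}h_n^2\,dx=\mathcal{O}(1)$ would yield only a constant. Extracting the logarithmic decay there hinges on the substitution $u=yX$ (which trades the $L^2$-normalization against the $du/X$ from the Jacobian) together with the integral inequality $\int_1^T(1+\ln u)^{-2\delta_1}\,du\preceq T(1+\ln T)^{-2\delta_1}$; this is the one place where any slower decay of the weight, or a weaker weight such as a power of $\ln\ln$, would fail. The transition and exponential regions, by contrast, are technically lighter and only require the standard small/large argument asymptotics of $H^{(1)}_{1/3}$ recorded in (\ref{realzeta})--(\ref{imagezeta}).
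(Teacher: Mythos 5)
Your proposal is correct and follows the same overall strategy as the paper: split $[0,\infty)$ into an oscillatory, a transition, and an exponential-decay region governed by the asymptotics (\ref{realzeta})--(\ref{imagezeta}), bound $|\zeta^{1/2}H^{(1)}_{1/3}(\zeta)|$ appropriately on each, and let the logarithmic weight produce the $(\ln n)^{-2\delta_1}$ factor. The one place where your route genuinely diverges from the paper's is the interior of the oscillatory region. You cut at $y=1/X$ and $y=1/2$ and then must confront the fact that the weight is $\mathcal{O}(1)$ near $y=1/X$; you handle this by substituting $u=yX$ and invoking $\int_1^T(1+\ln u)^{-2\delta_1}\,du\preceq T(1+\ln T)^{-2\delta_1}$, which works but is an extra step. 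The paper instead cuts at $y=X^{-1/2}$: on $[0,X^{-1/2}]$ the interval length already gives $\mathcal{O}(X^{-1/2})$, which is dominated by $(\ln n)^{-2\delta_1}$, while on $[X^{-1/2},1]$ one has $y^2X^2\ge X$ pointwise and hence the weight is uniformly $\preceq(1+\ln(1+X))^{-2\delta_1}$ with $\int_0^1(1-y^2)^{-1/2}\,dy<\infty$ providing the remaining factor. The paper's choice of $X^{-1/2}$ is the ``sweet spot'' that replaces your integral inequality by a sup bound, so what you identify as the ``main technical obstacle'' is really an artifact of cutting at $1/X$ rather than a feature of the problem. Two further minor points: (i) the paper takes the transition region of width $X^{-2/3}=\lambda_n^{-1/3}$ rather than your narrower $\lambda_n^{-2/3}$, compensating by the crude but valid bound $|\zeta^{1/2}H^{(1)}_{1/3}(\zeta)|^2\preceq |\zeta|^{1/3}$ (which dominates the true $\mathcal{O}(1)$ bound when $z\gtrsim 1$); (ii) in your substitution $t=\lambda_n(y-1)^{3/2}$ the Jacobian actually produces $t^{-2/3}$ rather than $t^{-1/3}$, but since $\int_0^\infty t^{-2/3}e^{-2t}\,dt<\infty$ this does not affect the conclusion.
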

\begin{proof}
  We split the   integral into three parts
\begin{eqnarray*}\Big(\int_{0}^{1}+\int_{1}^{K}+\int_{K}^{+\infty}\Big)\frac{|\zeta^{\frac12}H_{\frac{1}{3}}^{(1)}(\zeta)|^2}{|1-y^2|^{\frac{1}{2}}(1+\ln(1+y^2X^2))^{2\delta}}dy
=I_1+I_2+I_3
\end{eqnarray*}
and estimate them separately.\\
(1) When $0\leq y\leq1$, $\zeta=-z<0,$ we split the integral $I_1$ into two parts $I_1=I_{11}+I_{12}$. Applying the first relation of (\ref{realzeta}) to $I_{11}$ and the second to $I_{12}$,
\begin{eqnarray*}\label{}
I_{11}&\leq& \int_0^{1-X^{-\frac23}}\frac{C}{|1-y^2|^{\frac{1}{2}}(1+\ln(1+y^2X^2))^{2\delta_1}}dy\\
&\leq& \Big(\int_0^{X^{-\frac12}}+\int_{X^{-\frac12}}^1\Big)\frac{C}{|1-y^2|^{\frac{1}{2}}(1+\ln(1+y^2X^2))^{2\delta_1}}dy\\
&\leq& C\Big(X^{-\frac12}+\frac{1}{(1+\ln(1+X))^{2\delta_1}}\Big)\leq  \frac{C\cdot 2^{2\delta_1}}{(\ln n)^{2\delta_1}},
\end{eqnarray*}
and
\begin{eqnarray*}\label{}
I_{12}&\leq& C\int_{1-X^{-\frac23}}^1\frac{\zeta^{\frac{1}{3}}}{|1-y^2|^{\frac{1}{2}}(1+\ln(1+y^2X^2))^{2\delta_1}}dy\\
&\leq &\int_{1-X^{-\frac23}}^1\frac{CX^{\frac23}(1-y)^{\frac{1}{2}}}{|1-y^2|^{\frac{1}{2}}(1+\ln(1+y^2X^2))^{2\delta_1}}dy \leq  \frac{C\cdot 2^{2\delta_1}}{(\ln n)^{2\delta_1}}.
\end{eqnarray*}
(2) When $1\leq y\leq K$   and $w=-{\rm i}\zeta\geq 0$, we split the integral $I_2$ into two parts $I_2=I_{21}+I_{22}$. Applying the first relation of (\ref{imagezeta}) to $I_{21}$ and the second to $I_{22}$, we obtain
\begin{eqnarray*}\label{}
I_{21}&\leq& \int_{1+X^{-\frac23}}^K\frac{Ce^{-2X^2(y-1)^{\frac{3}{2}}}}{|1-y^2|^{\frac{1}{2}}(1+\ln(1+y^2X^2))^{2\delta_1}}dy\\
&\leq &\frac{C}{(\ln n)^{2\delta_1}}\int_{1+X^{-\frac23}}^{K}\frac{e^{-2X^2(y-1)^{\frac32}}}{(y-1)^{\frac12}}dy \leq   \frac{C{\lambda_n}^{-\frac23}}{(\ln n)^{2\delta_1}}\leq  \frac{C}{(\ln n)^{2\delta_1}},
\end{eqnarray*}
and
\begin{eqnarray*}\label{}
I_{22}&\leq& \int_{1 }^{1+X^{-\frac23}}\frac{CX^{\frac23}(y-1)^{\frac{1}{2}}}{|1-y^2|^{\frac{1}{2}}(1+\ln(1+y^2X^2))^{2\delta_1}}dy\\
&\leq&  \frac{C}{(1+\ln(1+X^2))^{2\delta_1}} \leq  \frac{C}{(\ln n)^{2\delta_1}}.
\end{eqnarray*}
(3) When $y\geq K$   and $w=-{\rm i}\zeta>0$, we apply  the first relation of (\ref{imagezeta}) to $I_{3}$,
\begin{eqnarray*}\label{}
I_{3}&\leq& \int^{\infty}_K\frac{Ce^{-2w}}{|1-y^2|^{\frac{1}{2}}(1+\ln(1+y^2X^2))^{2\delta_1}}dy\\
&\leq& \int^{\infty}_K\frac{Ce^{-2X^2y^2}}{y(1+\ln(1+y^2X^2))^{2\delta_1}}dy\\
&\leq& C \frac{1}{(1+\ln(1+X^2))^{2\delta_1}} \leq  \frac{C}{(1+\ln n)^{2\delta_1}}.
\end{eqnarray*}
Combining the above estimates together, we obtain (\ref{l2norm}).
\end{proof}

\section{Application to quantum harmonic oscillators}\label{S4}
In this section, we will apply Theorem \ref{KAMtheorem} to our model equation (\ref{maineq}) and prove the results stated in Section \ref{introduction}.
For readers' convenience, we rewrite the equation
\begin{eqnarray}\label{maineq1}
{\rm i}\partial_tu=-\partial_x^2 u+x^2u+\varepsilon V(x, \omega t;\omega)u,\ \ \ u=u(t,x),\ x\in\R,
 \end{eqnarray}
 where the potential $V: \R\times  \T^n \times \Pi\ni(x, \theta;\omega)\mapsto
\R$ is $C^3$ smooth  in all its variables and analytic in $\theta$.  For $\rho>0$ the function $V(x, \theta;\omega)$ analytically in $\theta$ 
extends to the domain
$\T_{\rho}^n$
as well as its gradient in $\omega$
and satisfies (\ref{VV1})--(\ref{VV3}) with $\beta\geq 2(n+2)$.\\
\indent In the following we will follow the scheme developed by  Eliasson and  Kuksin in \cite{EK0}. Expand $u$ and $\bar{u}$ on the Hermite basis $\{h_j\}_{j\geq1}$,
$u=\sum_{j\geq 1}z_jh_j,\ \ \bar{u}=\sum_{j\geq 1}\bar{z}_jh_j.$
And thus  (\ref{maineq1}) can be written as a nonautonomous Hamiltonian system
\begin{eqnarray}\label{hs01}
\left\{
\begin{array}{cc}
\displaystyle\dot{z}_j=-{\rm i}(2j-1)z_j-{\rm i}\varepsilon\frac{\partial}{\partial \bar{z}_j}\widetilde{P}(t,z,\bar{z}),\ j\geq 1,\\
\displaystyle\dot{\bar{z}}_j=\ \ {\rm i}(2j-1)\bar{z}_j+{\rm i}\varepsilon\frac{\partial}{\partial {z}_j}\widetilde{P}(t,z,\bar{z}),\   j\geq 1,
\end{array}
\right.
\end{eqnarray}
where $$\widetilde{P}(t,z,\bar{z})=\int_{\R}V(x,\omega t;\omega)(\sum_{j\geq 1}z_jh_j)(\sum_{j\geq 1}\bar{z}_jh_j)dx,$$ and $(z,\bar{z})\in\ell^{2,2}\times \ell^{2,2}.$ As \cite{EK0} and \cite{GT}, we write (\ref{hs01}) as an autonomous Hamiltonian system in an extended phase space
$\mathcal{P}^2:=\T^n\times\R^n\times\ell^{2,2}\times \ell^{2,2}$,
\begin{eqnarray}\label{hs02}
\left\{
\begin{array}{cc}
\displaystyle\dot{z}_j=-{\rm i}(2j-1)z_j-{\rm i}\varepsilon\frac{\partial}{\partial \bar{z}_j} {P}(\theta,z,\bar{z}),\ j\geq 1,\\
\displaystyle\dot{\bar{z}}_j=\ \ {\rm i}(2j-1)\bar{z}_j+{\rm i}\varepsilon\frac{\partial}{\partial {z}_j} {P}(\theta,z,\bar{z}),\   j\geq 1,\\
\displaystyle\dot{\theta}_j=\omega_j,\ \ \ \ \ \ \ \ \ \ \ \ \ \ \ \ \ \ \ \ \ \ \ \ \  j=1,2,\ldots,n,\\
\displaystyle\dot{y}_j=-\varepsilon\frac{\partial}{\partial \theta_j} {P}(\theta,z,\bar{z}),\ \ \ \ \ \ \ \ j=1,2,\ldots,n,
\end{array}
\right.
\end{eqnarray}
 with the Hamiltonian function $H=N+\varepsilon P,$ where
 \begin{eqnarray*}\label{NN}
N:=N(\omega) = \sum_{
1\leq j\leq n}
\omega_j  y_j + \sum_{j\geq 1}
(2j-1)z_j\bar{z}_j.
 \end{eqnarray*}
 and $$ {P}(\theta,z,\bar{z})=\int_{\R}V(x, \theta;\omega)(\sum_{j\geq 1}z_jh_j)(\sum_{j\geq 1}\bar{z}_jh_j)dx$$ is quadratic in $(z,\bar{z})$.
 {Here the external  parameters are the frequencies $\omega=(\omega_j)_{1\leq j\leq n}\in \Pi:=[0, 2\pi)^n$ and the normal frequencies $\Omega_j=2j-1$ are independent of $\omega$.}
We remark that the first three equations of (\ref{hs02}) are independent of $y$ and  equivalent to (\ref{hs01}). \\
\indent Similar as \cite{GT}, we have
\begin{Theorem}\label{KAMapp}
Assume that $V$ satisfies all the conditions in Theorem \ref{maintheorem} and $\beta\geq 2(n+2)$. Then there exists $\varepsilon_0$ such that for all $0 \leq\varepsilon<\varepsilon_0$ there exist\\
 (i)\ $\Pi_\varepsilon\subset [0, 2\pi)^n$ of positive measure and
$Meas(\Pi_\varepsilon)\rightarrow(2\pi)^n$ as $\varepsilon\rightarrow 0$; \\
 (ii)\ a Lipschitz family of real analytic, symplectic and linear coordinates transformation $\Phi:\ \Pi_\varepsilon\times\mathcal{P}^0\mapsto\mathcal{P}^0$ of the form
    \begin{eqnarray}\label{phifin}
\Phi_\omega(y,\theta,\zeta) =(y+\frac{1}{2}\zeta\cdot M_\omega(\theta)\zeta,\ \theta,\ L_\omega(\theta)\zeta)
 \end{eqnarray}
 where $\zeta=(z,\bar{z}),\  M_\omega(\theta)$ and $L_\omega(\theta)$ are linear bounded operators from $\ell^{2,p}\times \ell^{2,p}$ into itself for all $p\geq0$ and $L_\omega(\theta)$ is invertible;\\
 (iii)\ a Lipschitz family of new normal forms
  \begin{eqnarray*}\label{}
N^*(\omega) = \sum_{
1\leq j\leq n}
\omega_j  y_j + \sum_{j\geq 1}
\Omega^*_j(\omega)z_j\bar{z}_j;
 \end{eqnarray*}
 such that on $\Pi_\varepsilon\times\mathcal{P}^0$, $H\circ\Phi=N^*$.\\
\indent Moreover the new external frequencies are close to the original ones, $\|\Omega^*-\Omega\|_{2\beta,\Pi_\varepsilon}\leq c\varepsilon,$ and the new frequencies satisfy a nonresonant condition, i.e.
 $$ |k\cdot\omega+l\cdot\Omega^{*}(\omega)|\geq\frac{\alpha}{2}\cdot\frac
{\langle l \rangle}{exp(|k|^{1/{\iota}})},\qquad\ \iota\geq2,\  ( k,l ) \in \mathcal{Z},$$
for some $\alpha>0$ and $\omega\in\Pi_\varepsilon$.
 \end{Theorem}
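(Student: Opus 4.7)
\emph{Proof proposal.} The plan is to deduce Theorem \ref{KAMapp} by applying the abstract KAM theorem (Theorem \ref{KAMtheorem}) to $H = N + \varepsilon P$ on $\mathcal{M}^p$ with external parameter $\xi = \omega \in \Pi = [0,2\pi)^n$, and then to read off the special form (\ref{phifin}) from the particular structure of $P$. Verification of Assumption $\mathcal{A}$ is direct: (A1) is trivial since $\omega(\xi) = \xi$ is the identity; (A2) holds with $\overline{\Omega}_j = 2j-1 = \Omega_j$, so $\Omega - \overline{\Omega} \equiv 0$; for (A3), when $k \neq 0$ the hyperplane $\{\xi : k\cdot\xi + l\cdot\Omega = 0\}$ has Lebesgue measure zero in $\Pi$, while for $k = 0$, $0 < |l| \leq 2$, a direct case check on the odd integers $\{2j-1\}_{j \geq 1}$ yields $l\cdot\Omega \neq 0$.

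The substantive step is verifying $P \in \Gamma^\beta_{r, D(s,r)}$ in Assumption $\mathcal{B}$. Expanding the coefficients of $P$ in the Hermite basis gives
\[
\frac{\partial^2 P}{\partial z_j \partial \bar z_l} = \int_{\R} V(x,\theta;\omega)\, h_j(x) h_l(x)\, dx,
\]
and I would split the weight $(1+\ln(1+x^2))^{-2\beta}$ from (\ref{VV1}) into two factors of $(1+\ln(1+x^2))^{-\beta}$ and apply Cauchy--Schwarz to obtain $|\partial^2_{z_j \bar z_l} P| \preceq |||h_j||| \cdot |||h_l|||$; at this point Lemma \ref{Indecaysection1} supplies exactly the required logarithmic decay $(1+\ln j)^{-\beta}(1+\ln l)^{-\beta}$. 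The same factorisation yields $|\partial_{z_j} P| \preceq r/(1+\ln j)^{\beta}$ on $D(s,r)$ and $|P| \preceq r^2$, so $\langle P\rangle_{r,D(s,r)} \preceq 1$. The Lipschitz-in-$\omega$ versions follow verbatim after replacing $V$ by $\partial_{\omega_j} V$, which obeys the same decay in (\ref{VV1}). For the vector-field norm $\|X_P\|_{r,D(s,r)}$ at level $p = 2$, conditions (\ref{VV2})--(\ref{VV3}) ensure that multiplication by $V$ is a bounded operator on $\mathcal{H}^2$, so $z \mapsto \partial_{\bar z} P$ is bounded from $\ell^{2,2}$ to $\ell^{2,2}$; analyticity of $V$ in $\theta$ on $\T_\rho^n$ handles the $\partial_\theta P$ component via Cauchy's estimate.

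Granted Assumptions $\mathcal{A}$--$\mathcal{B}$, Theorem \ref{KAMtheorem} produces the Cantor set $\Pi_\varepsilon = \Pi_\alpha$, the transformation $\Phi$, the new normal form, and the non-resonance (\ref{thmestimate3}). To obtain the special form (\ref{phifin}) and the fact that the internal frequencies $\omega_j$ are \emph{unchanged}, I would observe that $P$ is both quadratic in $(z,\bar z)$ and independent of $y$. This structure is preserved by each KAM step: the homological equation $\{N, F\} = P - [P]$ has a solution $F$ of the same type, whose time-one flow fixes $\theta$, acts linearly on $(z, \bar z)$ with $\theta$-dependent coefficients, and perturbs $y$ only by a quadratic form in $(z,\bar z)$. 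Composing finitely many such maps and passing to the limit yields exactly (\ref{phifin}); the $y$-independence of $P$ means that no $\theta$-average of $\partial_{y_j}P$ is ever produced, so the internal frequencies $\omega_j$ are never shifted, giving $N^*(\omega) = \sum_{j=1}^n \omega_j y_j + \sum_{j\geq 1} \Omega_j^*(\omega) z_j \bar z_j$. Boundedness of $L_\omega(\theta)$ and $M_\omega(\theta)$ on $\ell^{2,p}$ for all $p \geq 0$ then follows from the linearity of the final map together with the uniform KAM estimates on the generating functions.

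The principal obstacle is the verification of Assumption $\mathcal{B}$: the logarithmic decay of $V$ at infinity is only barely strong enough to extract logarithmic decay of the Hermite matrix elements of $P$, and Lemma \ref{Indecaysection1} is precisely what makes this quantitative. Without the sharp weighted $L^2$ bound on $h_n$ supplied by Langer's turning-point analysis, the weights $(1+\ln j)^{-\beta}(1+\ln l)^{-\beta}$ built into the definition of $\Gamma^\beta_{r,D(s,r)}$ could not be reached, and the hypotheses of Theorem \ref{KAMtheorem} could not be invoked.
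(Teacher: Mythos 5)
Your proposal is correct and follows essentially the same route as the paper's own proof: verify Assumption $\mathcal{A}$ directly, verify Assumption $\mathcal{B}$ by writing the Hermite matrix elements of $P$ as integrals $\int V h_j h_l\,dx$, split the logarithmic weight from (\ref{VV1}) symmetrically and apply Cauchy--Schwarz, invoke Lemma \ref{Indecaysection1} for the weighted $L^2$ decay of the Hermite functions, use (\ref{VV2})--(\ref{VV3}) to get boundedness on $\mathcal{H}^2$ for (B1), then apply Theorem \ref{KAMtheorem} and exploit the preservation of the quadratic/$y$-independent structure to obtain (\ref{phifin}) with unchanged internal frequencies. The only detail you leave implicit that the paper records explicitly is the scaling choice $\alpha = \varepsilon^{1/10}$ to meet the smallness condition $\varepsilon \leq \gamma\alpha^5$; this is routine and does not affect the correctness of your argument.
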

 \begin{proof}
 As \cite{GT}, Assumption $\mathcal{A}$ is clear. We now check Assumption $\mathcal{B}$ holds. Firstly, we need to check that $(\frac{\partial P}{\partial z_k})_{k\geq1}\in\ell^{2,2}$ and $(\frac{\partial P_{\omega_j}}{\partial z_k})_{k\geq1}\in\ell^{2,2}$ with $j=1,\cdots, n$. \\
 \indent Note that $\frac{\partial P}{\partial z_k}=\int_{\R}V(x, \theta;\omega)\bar{u}h_kdx$, which is the $k$-th coefficient of the decomposition of $V(x,\theta;\omega)\bar{u}$ in the Hermite basis. It follows that $(\frac{\partial P}{\partial z_k})_{k\geq1}\in\ell^{2,2}$ if and only if $V(x, \theta;\omega)\bar{u}\in{\mathcal{H}}^2$.
 From $
|V| \leq C,
\ |\partial_xV |\leq C,
\ |\partial^2_{x}V | \leq   C
  $, $\bar{u}\in{\mathcal{H}}^2$ and a straightforward computation, we have $V(x, \theta;\omega)\bar{u}\in{\mathcal{H}}^2$. This implies that $(\frac{\partial P}{\partial z_k})_{k\geq1}\in\ell^{2,2}$. Similarly from $
|\partial_{\omega_j} V| \leq C,
\ |\partial_x(\partial_{\omega_j} V )| \leq C,
\ |\partial^2_{x}(\partial_{\omega_j} V) | \leq   C
  $ and $\bar{u}\in{\mathcal{H}}^2$ we obtain    $(\frac{\partial  P_{\omega_j}}{\partial z_k})_{k\geq1}\in\ell^{2,2}$ and thus (B1) is satisfied.\\
   \indent In the following we turn to (B2) in Assumption $\mathcal{B}$.
  {From (\ref{VV1}), Lemma \ref{Indecaysection1} and a straightforward computation,
  \begin{eqnarray*}\label{}
\Big\|\frac{\partial P}{\partial z_k}\Big\|_{D(s,r)}&=&\sup\limits_{D(s,r)}\Big|\int_{\R}V(x, \theta;\omega)\bar{u}h_kdx\Big|\\
&\preceq&  \Big(\int_{\R}\frac{|h_k|^2}{(1+\ln (1+x^2))^{2\beta}}dx\Big)^{\frac{1}{2}}\cdot \sup\limits_{D(s,r)}\Big(\int_{\R}|\bar{u}|^{2}dx\Big)^{\frac{1}{2}}\preceq \frac{C_\beta r}{(1+\ln k )^{\beta}}.
\end{eqnarray*}
Similarly,\begin{eqnarray*}\label{}
\Big\|\frac{\partial^2 P}{\partial z_k\partial \bar{z}_l}\Big\|_{D(s,r)}&=&\sup\limits_{D(s,r)}\Big|\int_{\R}V(x, \theta;\omega)h_kh_ldx\Big|
\preceq \frac{C_\beta}{(1+\ln k )^{\beta}(1+\ln l )^{\beta}}.
\end{eqnarray*}}
From the conditions (\ref{VV1}) - (\ref{VV3}) and a similar computation we obtain
  \begin{eqnarray*}\label{}
\Big\|\frac{\partial P}{\partial z_k}\Big\|^{\mathfrak{L}}_{D(s,r)}\preceq \frac{C_\beta r}{(1+\ln k )^{\beta}}\quad
{\rm{and}}\ \quad
 \Big\|\frac{\partial^2 P}{\partial z_k\partial \bar{z}_l}\Big\|^{\mathfrak{L}}_{D(s,r)}
\preceq \frac{C_\beta}{(1+\ln k )^{\beta}(1+\ln l )^{\beta}}.
\end{eqnarray*} It follows that $P\in\Gamma_{r,D(s,r)}^{\beta}$ with $s=\rho$ and $\beta\geq 2(n+2).$\\
 \indent For our application to Theorem \ref{maintheorem} we will choose $M=2\pi$, $\beta=\iota (n+2)$ with $\iota\geq 2.$ A straight computation shows that
 {\begin{eqnarray*}\label{}
&&\|X_{\varepsilon P}\|_{r,D(\rho,r)}+\langle \varepsilon P\rangle_{r,D(\rho,r)}+
\frac{\alpha}{2\pi}\big(\|X_{\varepsilon P}\|_{r,D(\rho,r)}^{\mathfrak{L}}+\langle {\varepsilon P}\rangle_{r,D(\rho,r)}^{\mathfrak{L}}\big)\\
&&\preceq\frac{c(\iota,n)\varepsilon}{\rho}(1+\alpha)\leq\frac{2c(\iota,n)\varepsilon}{\rho}\leq\gamma \alpha^5 ,\end{eqnarray*}
if we choose $\alpha=\varepsilon^{\frac{1}{10}}$ and $\varepsilon\leq\varepsilon_0:=(\frac{\gamma \rho}{2c(\iota,n)})^2$.\\}
 \indent Therefore Theorem \ref{KAMtheorem} applies with $p=2.$ Following \cite{GT} we have:\\
 (i)\ the symplectic coordinates transformation $\Phi$ is quadratic and thus it is defined on the whole phase space and have the form
    \begin{eqnarray*}\label{}
\Phi_\omega(y,\theta,\zeta) =(y+\frac{1}{2}\zeta\cdot M_\omega(\theta)\zeta,\ \theta,\ L_\omega(\theta)\zeta);
 \end{eqnarray*}
(ii)\ the new normal form still have the same frequencies vector $\omega$;\\
(iii)\ the new Hamiltonian reduces to the new normal form, i.e., $R^*=0$;\\
(iv)\ the symplectic coordinates transformation $\Phi_\omega$, which is defined by Theorem \ref{KAMapp} on each $\mathcal{P}^2$, extends to $\mathcal{P}^0:=\T^n\times\R^n\times\ell^{2,0}\times \ell^{2,0}$.\\
\indent  We complete the proof of Theorem \ref{KAMapp}. Meanwhile, Theorem \ref{maintheorem} follows directly from Theorem \ref{KAMapp}.
 \end{proof}
 \noindent{\bf{Proof of Corollary} \ref{coro1}.} See \cite{GT}.\qed\\
\noindent{\bf{Proof of Corollary} \ref{coro2}.} We follow the scheme developed in \cite{GT}. Firstly we write the solution $u(t,x)$ of (\ref{maineq1}) with initial datum $u_0(x)=\sum_{j\geq1}z_j(0)h_j(x)$ as $u(t,x)=\sum_{j\geq1}z_j(t)h_j(x)$ with
$$(z,\bar{z})(t)=L_\omega(\omega t)(z'(0)e^{-{\rm i}\Omega^*t},\bar{z}'(0)e^{{\rm i}\Omega^*t})$$
and $(z'(0),\bar{z}'(0) )=L^{-1}_\omega(0)(z (0),\bar{z} (0) )$. From the structure of $L_\omega(\theta)$, more clearly  $(L_\omega(\theta))_{jk}^{12}=(L_\omega(\theta))_{jk}^{21}=0$, we then have
$$u(t,x)=\sum_{j\geq1}\psi_j(\omega t,x)e^{-{\rm i}\Omega_j^*t},$$
where $\psi_j(\theta,x)=\sum_{l\geq1}(L_\omega(\theta))_{jk}^{11}z_j'(0)h_l(x)$. In particular the solutions are all almost periodic in time with a nonresonant frequencies vector $(\omega,\Omega^*).$ By a straight computation we can prove that $\psi_j(\omega t,x)e^{-{\rm i}\Omega_j^*t}$ solves (\ref{maineq1}) if and only if $k\cdot \omega+\Omega^*_j$ is an eigenvalue of $K$ with eigenfunction $\psi_j(\theta,x)e^{{\rm i}k\cdot\theta}$. This shows that the spectrum set of the Floquet operator $K$ equals  to $\{k\cdot \omega+\Omega^*_j|k\in\Z^n,\ j\geq 1\}$ and thus we complete the proof.\qed

\section{Proof of KAM Theorem}\label{S5}
\subsection{The linearized equation}
Let $H = N +P $ be a Hamiltonian, where
\begin{eqnarray}\label{1.2}
N = \sum_{
1\leq j\leq n}
\omega_j (\xi)y_j +\sum_{j\geq 1}
\Omega_j(\xi)z_j\bar{z}_j,
 \end{eqnarray} and $P$   satisfies  Assumption $\mathcal{B}$ in Sect. \ref{S2}. The aim in this section is to put $N+P$ into a new normal form $N_{+}+P_{+}$ such that $P_{+}$ is much smaller than $P$. To do this, we need to solve the homological equation
\begin{eqnarray}\label{homo}
\{F, N\} +  \widehat{N} = R,
\end{eqnarray}where $R$ is the second order Taylor approximation  of $P$,
\begin{eqnarray}\label{2.9}
R=\sum_{2|m|+|q+\bar{q}|\leq 2}\sum_{k\in \Z^n} R_{kmq\bar{q}}e^{ik\theta}y^m z^q \bar{z}^{\bar{q}}
\end{eqnarray}
with $R_{kmq\bar{q}}=P_{kmq\bar{q}}$,
and $F$ has a similar form as $R$,
\begin{eqnarray}\label{fpre}
F=\sum_{2|m|+|q+\bar{q}|\leq 2}\sum_{ k\in \Z^n} F_{kmq\bar{q}}e^{{\rm i}k\theta}y^m z^q \bar{z}^{\bar{q}}.
\end{eqnarray}
From \cite{Pos}, we have
\begin{lemma}\label{XFesti}
 Suppose   $|\omega|^{\mathfrak{L}}+\|\Omega\|^{\mathfrak{L}}_{2\beta,\Pi}\leq M$ uniformly on $\Pi$ and
$$|\la k,\omega(\xi)\ra +\la l,\Omega(\xi)\ra|\geq \frac{\la l\ra \alpha}{A_k},\ \ (k,l)\in\mathcal{Z},$$
where $\alpha>0$ and $A_k=e^{|k|^{\tau/\beta}}(\beta>\tau)$. Then the linearized equation $\{F, N\} +  \widehat{N} = R$ has a
solution $F,\  \widehat{N}$ satisfying $[F]=0,\  \widehat{N} = [R]=\sum\limits_{|m|+|q|=1}R_{0mq{q}}y^m z^q \bar{z}^{{q}},
 $ and
\begin{eqnarray*}
&&\|X_{\widehat{N}}\|_r\leq \|X_{R}\|_r\ , \hspace{1cm} \|X_{F}\|_{r,D(s-\sigma,r )}\leq \frac{c(n,\beta)e^{3(\frac{4}{\sigma})^{t_1}}}{\alpha}\|X_{R}\|_r\ ,\\
&&\|X_{\widehat{N}}\|^{\mathfrak{L}}_r\leq \|X_{R}\|^{\mathfrak{L}}_r\ ,\hspace{0.9cm} \|X_{F}\|^{\mathfrak{L}}_{r,D(s-\sigma,r )}\leq \frac{c(n,\beta)e^{3(\frac{4}{\sigma})^{t_1}}}{\alpha}(\|X_{R}\|^{\mathfrak{L}}_r+\frac{M}{\alpha}\|X_{R}\|_r)\
\end{eqnarray*}
for
$ 0< \sigma\leq s$,   $t_1=\frac{\tau}{\beta-\tau} $ and the short hand  $\|\cdot\|_r = \|\cdot\|_{r,D(s,r )}$ is used.
\end{lemma}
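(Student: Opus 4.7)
The plan is to adapt the standard homological-equation analysis (as in P\"oschel \cite{Pos}) to the exponential divisor $A_k=e^{|k|^{\tau/\beta}}$, which replaces the more familiar polynomial $A_k=(1+|k|)^\tau$. First I expand $R$ and $F$ in their Taylor--Fourier series (\ref{2.9}) and (\ref{fpre}) and compute the Poisson bracket term by term. Because $N$ is diagonal in the action variables, $\{F,N\}$ is diagonal on the monomials $e^{{\rm i}k\theta}y^mz^q\bar z^{\bar q}$: its $(k,m,q,\bar q)$-coefficient is ${\rm i}(\la k,\omega(\xi)\ra+\la q-\bar q,\Omega(\xi)\ra)F_{kmq\bar q}$. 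The admissible indices $2|m|+|q+\bar q|\leq 2$ force $l:=q-\bar q$ to satisfy $|l|\leq 2$, so the divisor hypothesis applies on all non-trivial modes.

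Second, I set $\widehat N:=[R]=\sum_{|m|+|q|=1}R_{0mqq}y^mz^q\bar z^q$, which absorbs precisely the resonant indices $k=0$, $l=0$, and define $F_{kmq\bar q}:=R_{kmq\bar q}/[{\rm i}(\la k,\omega\ra+\la l,\Omega\ra)]$ on the remaining indices, with normalization $[F]=0$. The estimate $\|X_{\widehat N}\|_r\leq\|X_R\|_r$ follows immediately since $\theta$-averaging is a contraction in the supremum norm, and the same passes to the Lipschitz semi-norm.

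Third, and most delicate, I estimate $X_F$ on $D(s-\sigma,r)$. Cauchy's inequality in $\theta$ gives the decay $|R_{kmq\bar q}|\preceq e^{-|k|s}\|X_R\|_r\cdot r^{\mu(m,q,\bar q)}$ for the appropriate weight $\mu$, while the divisor hypothesis inflates each coefficient of $F$ by a factor of $A_k/\alpha$. Summing over $k$, the whole bound reduces to controlling
\[
S(\sigma):=\sum_{k\in\Z^n}e^{|k|^{\tau/\beta}-|k|\sigma}.
\]
The function $t\mapsto t^{\tau/\beta}-\sigma t$ is maximized at $t_\star=(\tau/(\beta\sigma))^{\beta/(\beta-\tau)}$ with maximal value proportional to $(1/\sigma)^{t_1}$ where $t_1=\tau/(\beta-\tau)$. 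A standard splitting of the sum into $|k|\leq t_\star$ and $|k|\geq t_\star$, together with the polynomial lattice-point count in $\Z^n$, yields $S(\sigma)\preceq e^{3(4/\sigma)^{t_1}}$, the $n$-dependent factors being absorbed into $c(n,\beta)$. This is the main technical obstacle: the hypothesis $\beta>\tau$ is essential here, since for $\beta\leq\tau$ the sum $S(\sigma)$ diverges for every $\sigma>0$, and it is precisely this superpolynomial divisor that distinguishes the present setting (logarithmic decay) from the polynomial-decay case treated in \cite{GT}.

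Finally, the Lipschitz estimate is obtained by applying $\Delta_{\xi\eta}$ to the explicit formula for $F_{kmq\bar q}$. Differentiating the numerator supplies the $\|X_R\|^{\mathfrak L}_r$ contribution, while differentiating the denominator and invoking $|\omega|^{\mathfrak L}+\|\Omega\|^{\mathfrak L}_{2\beta,\Pi}\leq M$ introduces an extra factor of $A_k\,M\la l\ra/\alpha$. A second application of the divisor bound converts this into a $(M/\alpha)\|X_R\|_r$ term, and the same analysis of $S(\sigma)$ used above closes the estimate.
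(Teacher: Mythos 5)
Your proposal reproduces, with correct details, the argument the paper delegates to P\"oschel (the paper merely writes ``From \cite{Pos}, we have'' before stating Lemma \ref{XFesti} and gives no further proof). The diagonalization of $\{\cdot,N\}$ on Taylor--Fourier monomials, the definition of $\widehat N$ and of $F$ by division, and the reduction of the estimate on $D(s-\sigma,r)$ to the sum $S(\sigma)=\sum_{k\in\Z^n}e^{|k|^{\tau/\beta}-|k|\sigma}$ are all exactly the standard route, and your identification of $t_\star\sim(1/\sigma)^{\beta/(\beta-\tau)}$ and of the maximal exponent $\sim(1/\sigma)^{t_1}$, $t_1=\tau/(\beta-\tau)$, is the precise observation that distinguishes the present exponential divisor from the polynomial one in \cite{GT}. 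Your treatment of the Lipschitz bound is also right in spirit; note only that differentiating the denominator actually produces $A_k^2$ (from $1/(\delta_{k,jl}(\xi)\delta_{k,jl}(\eta))$) together with a factor $|k|M$, so the relevant sum is $\sum_k|k|e^{2|k|^{\tau/\beta}-|k|\sigma}$, which enlarges the constants in the exponent but leaves the power $(1/\sigma)^{t_1}$ unchanged --- this is exactly where the larger numerical constants $3,4$ in the stated bound (and $6,8$ in the subsequent Lemma \ref{fsemi}) come from. In short: correct, and essentially the same approach the paper relies on.
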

 \indent Introduce the space $\Gamma_{r,D(s,r)}^{\beta,+}\subset \Gamma_{r,D(s,r)}^{\beta}$ endowed with the norm $\la \cdot\ra_{r,D(s,r)}^{+}+\la \cdot\ra_{r,D(s,r)}^{+,\mathfrak{L}}$ defined by the following conditions:
\begin{eqnarray*}
\|F\|^{*}_{D(s,r)}&\leq&r^2\la F \ra_{r,D(s,r)}^{+,*},\ \ \ \ \ \max_{1\leq j \leq n}\Big\|\frac{\partial F}{\partial y_j}\Big\|^{*}_{D(s,r)}\leq \la F \ra_{r,D(s,r)}^{+,*},\\
\Big\|\frac{\partial F}{\partial w_j}\Big\|^{*}_{D(s,r)}&\leq&\frac{r}{j(1+\ln j)^{\beta}}\la F \ra_{r,D(s,r)}^{+,*},\ \ \forall j\geq 1\ {\rm and}\ w_j=z_j \ {\rm or}\ \bar{z}_j,\\
\Big\|\frac{\partial^2 F}{\partial w_j\partial w_l}\Big\|^{*}_{D(s,r)}&\leq&\frac{\la F \ra_{r,D(s,r)}^{+,*}}{(1+|j-l|)(1+\ln j)^{\beta}(1+\ln l)^{\beta}}, \ \ \forall j,l\geq 1\ {\rm and}\ w_j=z_j \ {\rm or}\ \bar{z}_j.
\end{eqnarray*}

\begin{lemma}\label{fsemi}
Assume that the frequencies satisfy
\begin{equation}\label{sdc}|\la k,\omega(\xi)\ra +\la l,\Omega(\xi)\ra|\geq \frac{\la l\ra \alpha}{A_k},\ \ (k,l)\in\mathcal{Z},
\end{equation}
where $\alpha>0$ and $A_k=e^{|k|^{\tau/\beta}}(\beta>\tau)$ uniformly on $\Pi$.  Let $F,\widehat{N}$ be given in the above lemma and
  $R\in\Gamma_{r,D(s,r)}^{\beta}$, then for any $0<\sigma<s$, we have $F\in\Gamma_{r,D(s-\sigma,r)}^{\beta,+}$, $\widehat{N}\in\Gamma_{r,D(s-\sigma,r)}^{\beta}$ such that
\begin{eqnarray*}
\la F \ra_{r,D(s-\sigma,r)}^{+}&\leq &\frac{c(n,\beta)e^{2(\frac{2}{\sigma})^{t_1}}}{\alpha}\la R \ra_{r,D(s,r)},\\
\la F \ra_{r,D(s-\sigma,r)}^{+,\mathfrak{L}}&\leq &\frac{c(n,\beta)e^{6(\frac{8}{\sigma})^{t_1}}}{\alpha}\Big(\frac{M}{\alpha}\la R \ra_{r,D(s,r)}+\la R \ra_{r,D(s,r)}^{\mathfrak{L}}\Big)
\end{eqnarray*}
with   $t_1=\frac{\tau}{\beta-\tau}$, and
\begin{eqnarray*}
\la \widehat{N} \ra_{r,D(s-\sigma,r)}\preceq \la R \ra_{r,D(s,r)},\quad\quad
\la \widehat{N} \ra_{r,D(s-\sigma,r)}^{\mathfrak{L}}\preceq\la R \ra_{r,D(s,r)}^{\mathfrak{L}}.
\end{eqnarray*}

\end{lemma}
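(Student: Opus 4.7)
The plan is to expand $R$ in its Fourier series in $\theta$ and its degree-$\leq 2$ Taylor polynomial in $(y,z,\bar z)$, solve the homological equation (\ref{homo}) coefficient by coefficient, and then read off the four conditions defining $\la\cdot\ra^{+}_{r,D(s-\sigma,r)}$ from the resulting formulas for $F$. With $R=\sum R_{kmq\bar q}e^{{\rm i}\la k,\theta\ra}y^m z^q\bar z^{\bar q}$ and the same ansatz for $F$, equation (\ref{homo}) decouples into
$$
\bigl({\rm i}\la k,\omega\ra+{\rm i}\la q-\bar q,\Omega\ra\bigr)F_{kmq\bar q}=R_{kmq\bar q},
$$
together with $\widehat N=[R]$. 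Lemma \ref{XFesti} already controls $\|X_F\|_{r,D(s-\sigma,r)}$ and its Lipschitz analogue, so the only genuinely new content is to verify the refined componentwise bounds that separate $\la F\ra^{+}$ from $\la F\ra$.

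The refinements come from the divisor structure of quadratic monomials. For a $z_i\bar z_j$-term one has $\la q-\bar q,\Omega\ra=\Omega_i-\Omega_j$ and $\la q-\bar q\ra=\max\{1,|i-j|\}$, so (\ref{sdc}) yields
$$
|F_{k,0,e_i,e_j}|\leq\frac{A_k}{\alpha(1+|i-j|)}\,|R_{k,0,e_i,e_j}|,
$$
and analogously the $z_iz_j$ and $\bar z_i\bar z_j$ coefficients of $F$ pick up a factor $1/(i+j)$. Cauchy-estimating the Fourier coefficients of $R$ via $\|\partial_{z_i}\partial_{\bar z_j}R\|_{D(s,r)}\leq(1+\ln i)^{-\beta}(1+\ln j)^{-\beta}\la R\ra$ (and the analogues for $z_iz_j$ and $\bar z_i\bar z_j$) and plugging back into $\partial_{z_j}\partial_{\bar z_l}F$ produces directly the mixed-derivative weight $(1+|j-l|)^{-1}(1+\ln j)^{-\beta}(1+\ln l)^{-\beta}$ required in the definition of $\la F\ra^{+}$. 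For the first derivative $\partial_{z_j}F$, summing the componentwise bounds against the pointwise estimate $|\bar z_l|\leq r l^{-p/2}$ valid on $D(s-\sigma,r)$ for $p\geq 2$, and splitting the $l$-range into $l<j/2$, $j/2\leq l\leq 2j$, $l>2j$ yields $\sum_l l^{-p/2}(1+|j-l|)^{-1}(1+\ln l)^{-\beta}\preceq 1/j$ for $\beta$ large, which is exactly the source of the improvement $r/(j(1+\ln j)^\beta)$. The $\theta$-summation contributes the standard factor $\sum_k A_k e^{-|k|\sigma}\preceq e^{2(2/\sigma)^{t_1}}$, as in Lemma \ref{XFesti}.

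For the Lipschitz seminorm I differentiate the representation formula of $F_{kmq\bar q}$ in $\xi$; the Leibniz rule produces one piece in which $R_{kmq\bar q}$ is differentiated, contributing $\la R\ra^{\mathfrak{L}}$, and a divisor piece, for which $|\partial_\xi(\la k,\omega\ra+\la q-\bar q,\Omega\ra)|\preceq M(|k|+1)$ combined with a second application of (\ref{sdc}) produces the extra $M/\alpha$ factor and an additional $A_k$. Summing in $k$ upgrades the constant $e^{2(2/\sigma)^{t_1}}$ to $e^{6(8/\sigma)^{t_1}}$. The bounds on $\widehat N=[R]$ are immediate because $\widehat N$ is a selection of finitely many coefficients of $R$ (those with $k=0$ and $|m|+|q|=1$), and each of the four defining conditions of $\la R\ra$ restricts to $\widehat N$ with no loss.

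The main obstacle I expect is the bookkeeping that converts the raw coefficient-level gain $1/(1+|i-j|)$ or $1/(i+j)$ into the advertised weight on $\partial_{w_j}F$ after summing over the auxiliary index. This requires the uniform-in-$j$ estimate $\sum_l l^{-p/2}(1+|j-l|)^{-1}(1+\ln l)^{-\beta}\preceq 1/j$, which is exactly where $p\geq 2$ and the large value $\beta\geq 2(n+2)$ meet, and where the technical inequalities deferred to the Appendix close the bootstrap.
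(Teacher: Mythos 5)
Your proposal is correct and follows essentially the same route as the paper: solve the homological equation coefficient by coefficient, exploit the $\la l\ra$ factor in the small-divisor bound to get the extra $1/(1+|i-j|)$ (respectively $1/(i+j)$) gain on quadratic coefficients and the $1/j$ gain on linear ones, sum the geometric-type series in $k$ to produce $e^{2(2/\sigma)^{t_1}}$, and treat the Lipschitz seminorm by a Leibniz decomposition $\Delta F_{k\cdot}=\delta^{-1}\Delta R_{k\cdot}+R_{k\cdot}\Delta\delta^{-1}$ with a second application of the divisor bound. The one small variation is in how the sum over the auxiliary index is closed: the paper applies Cauchy--Schwarz together with the $\ell^2$-weighted inequality of Lemma \ref{basic2}, whereas you use the pointwise bound $|z_l|\leq r l^{-p/2}$ and an $\ell^1$-type sum; both give the needed $1/j$ factor for $p\geq 2$ (with no need for $\beta$ to be large at this step — $\beta>1$ already suffices here; the condition $\beta\geq 2(n+2)$ is imposed for the measure estimates, not for this lemma). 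Your remark that $\widehat N$ is a \emph{finite} selection of coefficients of $R$ is a slip — it is the countable family of $k=0$ diagonal coefficients — but the conclusion $\la\widehat N\ra\preceq\la R\ra$ still follows since each defining inequality of $\la\cdot\ra$ restricts directly.
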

\begin{proof}
Our aim is to solve the homological equation (\ref{homo}) and find a solution $F$ for it.
A straightforward  computation shows that the coefficients in (\ref{fpre}) are given by
\begin{eqnarray}\label{homo02}
 {\rm i}F_{kmq\bar{q}}=\left\{
\begin{array}{cc}
\displaystyle\frac{R_{kmq\bar{q}}}{k\cdot \omega+(q-\bar{q})\Omega},& {\rm if}\ |k|+|q-\bar{q}|\neq0,\\
\displaystyle0,& {\rm otherwise}.
\end{array}\right.
\end{eqnarray}
As \cite{GT}, in the following we will use the notation $q_j = (0, \cdots, 0, 1, 0, \cdots)$, where ``1" is
in the $j-$th position  and $q_{jl} = q_j + q_l $. The variables $z$ and $\bar{z}$ exactly play the same role,
therefore it is enough to study the derivatives in  $z$. We first show that
\begin{eqnarray*}
\la F \ra_{r,D(s-\sigma,r)}^{+}&\preceq &\frac{c(n,\beta)e^{2(\frac{2}{\sigma})^{t_1}}}{\alpha}\la R \ra_{r,D(s,r)}.
\end{eqnarray*}
From
$|R_{k0q_{jl}0}|\preceq \frac{ \la R \ra_{r,D(s,r)}e^{-|k|s}}{(1+\ln j)^\beta(1+\ln l)^\beta}$, (\ref{sdc}) and (\ref{homo02}),
$$|F_{k0q_{jl}0}|\preceq \frac{ A_k\la R \ra_{r,D(s,r)}e^{-|k|s}}{\alpha|j+l|(1+\ln j)^\beta(1+\ln l)^\beta} \preceq \frac{ A_k\la R \ra_{r,D(s,r)}e^{-|k|s}}{\alpha(1+|j-l|)(1+\ln j)^\beta(1+\ln l)^\beta}.$$
Therefore,
\begin{eqnarray}\label{f01}
\nonumber\Big\|\frac{\partial^2 F}{\partial z_j\partial z_l} \Big\|_{D(s-\sigma,r)}&\leq &\sum_{k\in \Z^n}|F_{k0q_{jl}0}|e^{|k|(s-\sigma)}\\
\nonumber&\preceq& \sum_{k\in \Z^n}\frac{ A_k\la R \ra_{r,D(s,r)}e^{-|k|\sigma}}{\alpha(1+|j-l|)(1+\ln j)^\beta(1+\ln l)^\beta}\\
&\preceq& \frac{c(n,\beta)e^{2(\frac{2}{\sigma})^{t_1}}}{\alpha(1+|j-l|)(1+\ln j)^\beta(1+\ln l)^\beta}\la R \ra_{r,D(s,r)}.
\end{eqnarray}
From Lemma \ref{basic2} and a similar computation, we have
\begin{eqnarray}\label{f02}
\Big\|\frac{\partial F}{\partial z_j} \Big\|_{D(s-\sigma,r)}\preceq  \frac{c(n,\beta)re^{2(\frac{2}{\sigma})^{t_1}}}{\alpha(1+j)(1+\ln j)^\beta}\la R \ra_{r,D(s,r)}.
\end{eqnarray}
\indent Similarly,  \begin{eqnarray}\label{f03}
\Big\|\frac{\partial F}{\partial y_j} \Big\|_{D(s-\sigma,r)}\preceq  \frac{c(n,\beta)e^{2(\frac{2}{\sigma})^{t_1}}}{\alpha}\la R \ra_{r,D(s,r)},
\end{eqnarray}
and
\begin{eqnarray}\label{f04}
\|F\|_{D(s-\sigma,r)}\preceq \frac{c(n,\beta)e^{2(\frac{2}{\sigma})^{t_1}}}{\alpha}r^2\la R \ra_{r,D(s,r)}.
\end{eqnarray}
 The above  estimates (\ref{f01}), (\ref{f02}), (\ref{f03}) and (\ref{f04}) show us  that
\begin{eqnarray*}
\la F\ra^+_{r,D(s-\sigma,r)}\preceq\frac{c(n,\beta)e^{2(\frac{2}{\sigma})^{t_1}}}{\alpha}\la R \ra_{r,D(s,r)}.
\end{eqnarray*}
\indent It remains to check the estimates with the Lipschitz semi-norms. As in \cite{Pos}, for
$|k| + |q_j-q_l | \neq 0$ define $\delta_{k, jl} = k \cdot \omega+ \Omega_j-\Omega_l $ and $\Delta=\Delta_{\xi\zeta}$.
Then we have
$${\rm i}\Delta{F}_{k0q_j\bar{q}_l}=\delta_{k,jl}^{-1}(\eta)\Delta{R}_{k0q_j\bar{q}_l}+{R}_{k0q_j\bar{q}_l}(\xi)\Delta\delta_{k,jl}^{-1},$$
and
$$-\Delta\delta_{k,jl}^{-1}=\frac{\la k,\Delta\omega\ra+\Delta\Omega_j-\Delta\Omega_l}{\delta_{k,jl}(\xi)\delta_{k,jl}(\zeta)}.$$
By the small divisor assumptions and a direct computation, we have
\begin{eqnarray*}
\frac{|\Delta{F}_{k0q_j\bar{q}_l}|}{|\xi-\eta|}&\preceq&\frac{A_k}{\alpha \la j-l\ra}\frac{|\Delta{R}_{k0q_j\bar{q}_l}|}{|\xi-\eta|}+\frac{M|k|A^2_k}{\alpha^2\la j-l\ra^2}|{R}_{k0q_j\bar{q}_l}|\\
&\preceq &\frac{|k|A_k^2}{\alpha \la j-l\ra}\Big(\frac{|\Delta{R}_{k0q_j\bar{q}_l}|}{|\xi-\eta|}+\frac{M}{\alpha}|{R}_{k0q_j\bar{q}_l}|\Big).
\end{eqnarray*}
\indent Now we go to estimate $\la F \ra_{r,D(s-\sigma,r)}^{+,\mathfrak{L}}$. We only estimate $\displaystyle\Big\|\frac{\partial^2 F}{\partial z_j\partial \bar{z}_l}\Big\|^{\mathfrak{L}}_{D(s-\sigma,r)}.$
Note $\frac{\partial^2 F}{\partial z_j\partial \bar{z}_l}=\sum_{k\in \Z^n}F_{k0q_{j}\bar{q}_l}e^{{\rm i}k\theta}$ and
$\Delta\frac{\partial^2 F}{\partial z_j\partial \bar{z}_l}=\sum_{k\in \Z^n}\Delta F_{k0q_{j}\bar{q}_l}e^{{\rm i}k\theta}$, it follows that
for $(\theta,y,z,\bar{z})\in D(s-\sigma,r),$
\begin{eqnarray*}
\frac{|\Delta\frac{\partial^2 F}{\partial z_j\partial \bar{z}_l}|}{|\xi-\eta|}&\leq &\sum_k\frac{|\Delta{F}_{k0q_j\bar{q}_l}|}{|\xi-\eta|}e^{|k|(s-\sigma)}\\
&\preceq &\sum_k\frac{ |k|A_k^2}{\alpha \la j-l\ra}\Big(\frac{|\Delta{R}_{k0q_j\bar{q}_l}|}{|\xi-\eta|}+\frac{M}{\alpha}|{R}_{k0q_j\bar{q}_l}|\Big)
e^{|k|(s-\sigma)}\\
&\preceq&\sum_k\frac{|k|A_k^2}{\alpha \la j-l\ra}\Big(|{R}_{k0q_j\bar{q}_l}|^{\mathfrak{L}}+\frac{M}{\alpha}|{R}_{k0q_j\bar{q}_l}|\Big)
e^{|k|(s-\sigma)}.
\end{eqnarray*}
Combining with $|{R}_{k0q_j\bar{q}_l}|\preceq\frac{\la R\ra_{r,D(r,s)}e^{-|k|s}}{(1+\ln j)^\beta(1+\ln l)^\beta}$  and $|{R}_{k0q_j\bar{q}_l}|^{\mathfrak{L}}\preceq\frac{\la R\ra^{\mathfrak{L}}_{r,D(r,s)}e^{-|k|s}}{(1+\ln j)^\beta(1+\ln l)^\beta}$, we deduce that
$$\Big\|\frac{\partial^2 F}{\partial z_j\partial \bar{z}_l}\Big\|^{\mathfrak{L}}_{D(s-\sigma,r)}\preceq\frac{c(n,\beta)e^{6(\frac{8}{\sigma})^{t_1}}}{\alpha(1+\ln j)^\beta(1+\ln l)^\beta(1+|j-l|)}\Big(\frac{M}{\alpha}\la R \ra_{r,D(s,r)}+\la R \ra_{r,D(s,r)}^{\mathfrak{L}}\Big).$$
\indent A similar computation for other terms provides that
\begin{eqnarray*}
\la F \ra_{r,D(s-\sigma,r)}^{+,\mathfrak{L}}&\preceq &\frac{c(n,\beta)e^{6(\frac{8}{\sigma})^{t_1}}}{\alpha}\Big(\frac{M}{\alpha}\la R \ra_{r,D(s,r)}+\la R \ra_{r,D(s,r)}^{\mathfrak{L}}\Big).
\end{eqnarray*}
The estimates for $\widehat{N}$ are similar and we omit the details.
\end{proof}
Now we turn to the estimates on Poisson bracket.
\begin{lemma}\label{Lrfsemi}
Let $R\in\Gamma_{r,D(s,r)}^{\beta}$ and  $F\in\Gamma_{r,D(s-\sigma,r)}^{\beta,+}$ be both of degree 2, i.e., $R,\ F$ are of the forms (\ref{2.9}) and (\ref{fpre}), respectively. Then, 
for any $0<2\sigma<s$,
\begin{eqnarray}\label{poi01}
\la \{R,F\} \ra_{r,D(s-2\sigma,\frac{r}{2})}&\preceq &\frac{1}{\sigma}\la R \ra_{r,D(s,r)}\la F \ra_{r,D(s-\sigma,r)}^{+},\\
\la \{R,F\} \ra_{r,D(s-2\sigma,\frac{r}{2})}^{\mathfrak{L}}&\preceq &\frac{1}{\sigma}\big(\la R \ra_{r,D(s,r)}\la F \ra_{r,D(s-\sigma,r)}^{+,\mathfrak{L}}+\la F \ra_{r,D(s-\sigma,r)}^+\la R \ra_{r,D(s,r)}^{\mathfrak{L}}\big).\label{poilip}
\end{eqnarray}
\end{lemma}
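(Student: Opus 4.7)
The plan is to verify the four defining inequalities of the norm $\la\cdot\ra_{r,D(s-2\sigma,r/2)}$ applied to $\{R,F\}$, and then obtain (\ref{poilip}) by rerunning the same calculation with the Leibniz-type product rule $\|AB\|^{\mathfrak{L}}\leq\|A\|^{\mathfrak{L}}\|B\|+\|A\|\|B\|^{\mathfrak{L}}$. Expand the bracket as
$$\{R,F\}=\sum_{k=1}^{n}\bigl(\partial_{\theta_k}R\,\partial_{y_k}F-\partial_{y_k}R\,\partial_{\theta_k}F\bigr)+{\rm i}\sum_{m\geq1}\bigl(\partial_{z_m}R\,\partial_{\bar z_m}F-\partial_{\bar z_m}R\,\partial_{z_m}F\bigr).$$
Because of the degree constraint $2|m|+|q+\bar q|\leq2$, no monomial of the form $y_k z_j$ or $y_k\bar z_j$ appears in $R$ or $F$, so $\partial_{y_k}\partial_{w_j}R=\partial_{y_k}\partial_{w_j}F=0$ for $w\in\{z,\bar z\}$. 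This kills many terms in every subsequent differentiation and reduces the argument to a finite, transparent list of cases.

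First I would bound $\|\{R,F\}\|_{D(s-\sigma,r)}$. For the $(\theta,y)$-piece, Cauchy's estimate in $\theta$ gives $\|\partial_{\theta_k}R\|_{D(s-\sigma,r)}\leq\sigma^{-1}r^{2}\la R\ra$ while $\|\partial_{y_k}F\|_{D(s-\sigma,r)}\leq\la F\ra^{+}$ by definition of the $+$-norm. For the $(z,\bar z)$-piece, the defining bound on $R\in\Gamma^{\beta}$ supplies $\|\partial_{z_m}R\|\leq r(1+\ln m)^{-\beta}\la R\ra$, and the sharper $\Gamma^{\beta,+}$ bound for $F$ supplies the crucial extra factor $m^{-1}$: $\|\partial_{\bar z_m}F\|\leq rm^{-1}(1+\ln m)^{-\beta}\la F\ra^{+}$. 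Each product is thus dominated by $r^{2}m^{-1}(1+\ln m)^{-2\beta}\la R\ra\la F\ra^{+}$, and the series $\sum_{m\geq1}m^{-1}(1+\ln m)^{-2\beta}$ converges since $2\beta>1$. Restricting to $D(s-2\sigma,r/2)$ settles condition one. Condition two, the bound on $\partial_{y_j}\{R,F\}$, then follows via a single Cauchy estimate in $y_j$: shrinking the $y$-radius from $r^{2}$ to $r^{2}/4$ costs a factor $\asymp r^{-2}$ and absorbs the $r^{2}$ prefactor above.

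For conditions three and four I would differentiate the bracket term by term. In $\partial_{z_j}\{R,F\}$, Leibniz produces products where exactly one factor carries a second derivative of $R$ or $F$; by the definitions of $\Gamma^{\beta}$ and $\Gamma^{\beta,+}$ this factor supplies the required $(1+\ln j)^{-\beta}$, together with either $(1+\ln m)^{-\beta}$ or the stronger $(1+|j-m|)^{-1}(1+\ln m)^{-\beta}$. The remaining single-derivative factor is paired with it and summed in $m$; the "both on $R$" and "both on $F$" sums reduce to $\sum_m m^{-1}(1+\ln m)^{-2\beta}$, while the "cross" sum $\sum_m(1+|j-m|)^{-1}(1+\ln m)^{-2\beta}$ is treated by splitting into $m\leq j/2$, $j/2<m<2j$ and $m\geq2j$, which yields at most $C(1+\ln j)^{1-2\beta}$, comfortably inside the allowed $(1+\ln j)^{-\beta}$ since $\beta\geq1$. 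The analysis of $\partial_{w_j}\partial_{w_l}\{R,F\}$ is strictly analogous: the two $w$-derivatives land, in each product, either on the same factor (using the second-derivative bound directly) or one on each (using first-derivative bounds together with the $(1+|j-l|)^{-1}$ decay built into $\Gamma^{\beta,+}$); every $\theta$-derivative is again handled by a Cauchy estimate, producing the single $\sigma^{-1}$ appearing in (\ref{poi01}).

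The Lipschitz bound (\ref{poilip}) is obtained by repeating the whole calculation after replacing $\|AB\|$ by $\|A\|^{\mathfrak{L}}\|B\|+\|A\|\|B\|^{\mathfrak{L}}$ in every product. All estimates on $R$, $F$ and their derivatives are available in both flavours via the parallel definitions of $\la\cdot\ra$, $\la\cdot\ra^{\mathfrak{L}}$, $\la\cdot\ra^{+}$ and $\la\cdot\ra^{+,\mathfrak{L}}$, so (\ref{poilip}) follows with no new small divisor, and hence no $M/\alpha$ factor (contrary to Lemma \ref{fsemi}, where such a factor arose solely from differentiating the resonant denominator). The main obstacle throughout is purely bookkeeping: tracking which factor in each product supplies the $(1+\ln j)^{-\beta}$ decay, which supplies the $m^{-1}$ gain from the $+$-norm, and confirming that the mixed sum $\sum_m(1+|j-m|)^{-1}(1+\ln m)^{-2\beta}$ is absorbed into the prescribed prefactor; the hypothesis $\beta\geq2(n+2)$ leaves ample room for these manipulations.
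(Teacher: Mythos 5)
Your proposal is correct and follows essentially the same route as the paper's proof: expand the Poisson bracket into its $(\theta,y)$ and $(z,\bar z)$ pieces, verify the four defining inequalities of $\langle\cdot\rangle_{r,D(s-2\sigma,r/2)}$ using Cauchy in $\theta$, the extra $j^{-1}$ gain from the $\Gamma^{\beta,+}$ norm of $F$, and the summability facts of Lemma~\ref{basic1}, then rerun with the Leibniz rule for Lipschitz seminorms to get (\ref{poilip}) without any $M/\alpha$ loss. Your two small deviations — obtaining the $\partial_{y_j}$ bound by a single Cauchy estimate in $y$ rather than term-by-term, and using the degree-$\leq 2$ constraint to observe that $\partial_{y_k}\partial_{w_j}R=\partial_{y_k}\partial_{w_j}F=0$ so several Leibniz terms vanish identically — are legitimate streamlinings of what the paper bounds directly, and the only slip is a bookkeeping one: the preliminary sup-norm bound on $\{R,F\}$ must already live on $D(s-2\sigma,r)$ (not $D(s-\sigma,r)$) since the $\partial_\theta F$ Cauchy estimate starts from $D(s-\sigma,r)$.
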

\begin{proof}
For simplicity we denote $\la R\ra : = \la R\ra_{r,D(s,r)},$  $\la R\ra^{\mathfrak{L}} : = \la R\ra_{r,D(s,r)}^{\mathfrak{L}} $,
$\la F\ra^{+} : = \la F\ra_{r,D(s-\sigma,r)}^{+}$  and
$\la F\ra^{+,\mathfrak{L}} : = \la F\ra_{r,D(s-\sigma,r)}^{+,\mathfrak{L}}$. Note that
$$\{R, F\}=\sum_{k=1}^n\Big(\frac{\partial R}{\partial\theta_k}\frac{\partial F}{\partial y_k}-\frac{\partial R}{\partial y_k}\frac{\partial F}{\partial\theta_k}\Big)+{\rm i}\sum_{j\geq1}\Big(\frac{\partial R}{\partial z_j}\frac{\partial F}{\partial \bar{z}_j}-\frac{\partial R}{\partial \bar{z}_j}\frac{\partial F}{\partial z_j}\Big),$$
it remains to estimate each term of this expansion and its derivatives.\\
\indent We first prove (\ref{poi01}).
From Cauchy and the basic inequality
$\displaystyle\sum\limits_{j\geq1}\frac{1}{j(1+\ln j)^{2\beta}}\preceq 1(\beta\geq 1)$,
we have
\begin{eqnarray}\label{poi02}
\|\{R, F\}\|_{D(s-2\sigma,r)}\preceq\frac{ r^2\la R\ra\la F\ra^+}{\sigma}.
\end{eqnarray}
\indent Similarly,
\begin{eqnarray}\label{poi03}
\max_{1\leq j\leq n}\Big\|\frac{\partial}{\partial y_j}\{R, F\}\Big\|_{D(s-2\sigma,r)}\preceq\frac{ \la R\ra\la F\ra^+}{\sigma}.
\end{eqnarray}
\indent Write
\begin{eqnarray}
\frac{\partial}{\partial z_j}\{R, F\}&=&\sum_{k=1}^n\Big(\frac{\partial^2 R}{\partial\theta_k\partial z_j}\frac{\partial F}{\partial y_k}+\frac{\partial R}{\partial\theta_k}\frac{\partial^2 F}{\partial y_k\partial z_j}-\frac{\partial^2 R}{\partial y_k\partial z_j}\frac{\partial F}{\partial\theta_k}-\frac{\partial R}{\partial y_k}\frac{\partial^2 F}{\partial\theta_k\partial z_j}\Big)\nonumber\\
&+&{\rm i}\sum_{k\geq1}\Big(\frac{\partial^2 R}{\partial z_k\partial z_j}\frac{\partial F}{\partial \bar{z}_k}+\frac{\partial R}{\partial z_k}\frac{\partial^2 F}{\partial \bar{z}_k\partial z_j}-\frac{\partial^2 R}{\partial \bar{z}_k\partial z_j}\frac{\partial F}{\partial z_k}-\frac{\partial R}{\partial \bar{z}_k}\frac{\partial^2 F}{\partial z_k\partial z_j}\Big)\nonumber\\
&:=& (I)+(II).\label{rfpianz1}
\end{eqnarray}
By a direct computation it holds that
\begin{eqnarray*}
\|(I)\|_{D(s-2\sigma,\frac{r}{2})}&\leq&\sum_{k=1}^n\Big(\frac{r\la R \ra\la F \ra^+}{\sigma(1+\ln j)^\beta}+\frac{4r\la R \ra\la F \ra^+}{\sigma j(1+\ln j)^\beta}+\frac{4r\la R \ra\la F \ra^+}{\sigma (1+\ln j)^\beta}+\frac{r\la R \ra\la F \ra^+}{\sigma j(1+\ln j)^\beta}\Big)\\
&\preceq&\frac{ r\la R \ra\la F \ra^+}{\sigma(1+\ln j)^\beta}.
\end{eqnarray*}
From Lemma \ref{basic1},
$
\|(II)\|_{D(s-2\sigma,\frac{r}{2})}\preceq\frac{ r\la R \ra\la F \ra^+}{(1+\ln j)^\beta}.
$
Thus
\begin{eqnarray}\label{poi04}
\Big\|\frac{\partial}{\partial z_j}\{R, F\}\Big\|_{D(s-2\sigma,\frac{r}{2})}&\preceq&\frac{ r\la R \ra\la F \ra^+}{\sigma(1+\ln j)^\beta}.
\end{eqnarray}
\indent By the same method and Lemma \ref{basic1} we obtain
\begin{eqnarray}\label{poi05}
\Big\|\frac{\partial^2}{\partial z_j\partial z_l}\{R, F\}\Big\|_{D(s-2\sigma,\frac{r}{2})}&\preceq&\frac{  \la R \ra\la F \ra^+}{\sigma(1+\ln j)^\beta(1+\ln l)^\beta}.
\end{eqnarray}
Together with (\ref{poi02})-(\ref{poi05}), (\ref{poi01}) is proved.\\
\indent For the  Lipschitz norm estimates, we first estimate $\|\frac{\partial}{\partial z_j}\{R, F\}\|^{\mathfrak{L}}_{D(s-2\sigma,\frac{r}{2})}$. Note that
\begin{eqnarray*}
\Big\|\frac{\partial^2 R}{\partial\theta_k\partial z_j}\frac{\partial F}{\partial y_k}\Big\|_{D(s-2\sigma,\frac{r}{2})}^{\mathfrak{L}}&\leq&\Big\|\frac{\partial^2 R}{\partial\theta_k\partial z_j}\Big\|_{D(s-2\sigma,\frac{r}{2})}^{\mathfrak{L}}\Big\|\frac{\partial F}{\partial y_k}\Big\|_{D(s-2\sigma,\frac{r}{2})}+\Big\|\frac{\partial^2 R}{\partial\theta_k\partial z_j}\Big\|_{D(s-2\sigma,\frac{r}{2})}\Big\|\frac{\partial F}{\partial y_k}\Big\|_{D(s-2\sigma,\frac{r}{2})}^{\mathfrak{L}}\\
\end{eqnarray*}
where
\begin{eqnarray*}
\Big\|\frac{\partial^2 R}{\partial\theta_k\partial z_j}\Big\|_{D(s-2\sigma,\frac{r}{2})}&\leq&\frac{1}{\sigma}\Big\|\frac{\partial R}{\partial z_j}\Big\|_{D(s,r)}\leq \frac{r\la R\ra}{\sigma(1+\ln j)^\beta},\\
\Big\|\frac{\partial^2 R}{\partial\theta_k\partial z_j}\Big\|_{D(s-2\sigma,\frac{r}{2})}^{\mathfrak{L}}&\leq&\frac{1}{\sigma}\Big\|\frac{\partial R}{\partial z_j}\Big\|_{D(s,r)}^{\mathfrak{L}}\leq \frac{r\la R\ra^{\mathfrak{L}}}{\sigma(1+\ln j)^\beta},
\end{eqnarray*}
$\Big\|\frac{\partial F}{\partial y_k}\Big\|_{D(s-2\sigma,\frac{r}{2})}\leq\la F\ra^+$ and
$\Big\|\frac{\partial F}{\partial y_k}\Big\|^{\mathfrak{L}}_{D(s-2\sigma,\frac{r}{2})}\leq\la F\ra^{+,\mathfrak{L}}$.
Hence
\begin{eqnarray*}
\Big\|\frac{\partial^2 R}{\partial\theta_k\partial z_j}\frac{\partial F}{\partial y_k}\Big\|_{D(s-2\sigma,\frac{r}{2})}^{\mathfrak{L}}&\preceq &\frac{r(\la R\ra^{\mathfrak{L}}\la F\ra^++\la R\ra\la F\ra^{+,\mathfrak{L}})}{\sigma(1+\ln j)^\beta}.\\
\end{eqnarray*}
For the other terms in $(I)$ in (\ref{rfpianz1}) the estimates are similar.
Thus,
\begin{eqnarray*}
\|(I)\|_{D(s-2\sigma,\frac{r}{2})}^{\mathfrak{L}}&\preceq &\frac{r(\la R\ra^{\mathfrak{L}}\la F\ra^++\la R\ra\la F\ra^{+,\mathfrak{L}})}{\sigma (1+\ln j)^\beta}.\\
\end{eqnarray*}
For (II) we only estimate
$\Big\|\sum\limits_{k\geq1}\frac{\partial^2 R}{\partial z_k\partial z_j}\frac{\partial F}{\partial \bar{z}_k}\Big\|_{D(s-2\sigma,\frac{r}{2})}^{\mathfrak{L}}$. From Lemma \ref{basic1},
\begin{eqnarray*}
&&\Big\|\sum_k\frac{\partial^2 R}{\partial z_k\partial z_j}\frac{\partial F}{\partial \bar{z}_k}\Big\|_{D(s-2\sigma,\frac{r}{2})}^{\mathfrak{L}}\\
&\leq&\sum_k\Big\|\frac{\partial^2 R}{\partial z_k\partial z_j}\Big\|_{D(s-2\sigma,\frac{r}{2})}^{\mathfrak{L}}\Big\|\frac{\partial F}{\partial \bar{z}_k}\Big\|_{D(s-2\sigma,\frac{r}{2})}+\sum_k\Big\|\frac{\partial^2 R}{\partial z_k\partial z_j}\Big\|_{D(s-2\sigma,\frac{r}{2})}\Big\|\frac{\partial F}{\partial \bar{z}_k}\Big\|_{D(s-2\sigma,\frac{r}{2})}^{\mathfrak{L}}\\
&\leq&\sum_k\frac{\la R\ra^{\mathfrak{L}}}{(1+\ln k)^\beta(1+\ln j)^\beta}\frac{r\la F\ra^+}{k(1+\ln k)^\beta}+\sum_k\frac{\la R\ra}{(1+\ln k)^\beta(1+\ln j)^\beta}\frac{r\la F\ra^{+,\mathfrak{L}}}{k(1+\ln k)^\beta}\\
&\preceq &\frac{r}{(1+\ln j)^\beta}(\la R\ra^{\mathfrak{L}}\la F\ra^++\la R\ra\la F\ra^{+,\mathfrak{L}}).
\end{eqnarray*}
Similar other estimates result in
\begin{eqnarray*}
\|(II)\|_{D(s-2\sigma,\frac{r}{2})}^{\mathfrak{L}}&\preceq &\frac{r}{(1+\ln j)^\beta}(\la R\ra^{\mathfrak{L}}\la F\ra^++\la R\ra\la F\ra^{+,\mathfrak{L}}).
\end{eqnarray*}
Therefore,
\begin{eqnarray*}
\Big\|\frac{\partial}{\partial z_j}\{R, F\}\Big\|_{D(s-2\sigma,\frac{r}{2})}^{\mathfrak{L}}&\preceq &\frac{r}{\sigma (1+\ln j)^\beta}\big(\la R\ra^{\mathfrak{L}}\la F\ra^++\la R\ra\la F\ra^{+,\mathfrak{L}}\big).
\end{eqnarray*}
To obtain (\ref{poilip}) we need some other estimates and the proofs are similar. We omit them for simplicity.
\end{proof}
\subsection{Phase flow.}
In this subsection we study the Hamiltonian flow generated by   $F\in\Gamma_{r,D(s-\sigma,r)}^{\beta,+}$  which is  globally of degree 2. Namely, we consider the system
\begin{eqnarray}\label{flow}
\left\{
\begin{array}{c}
(\dot{\theta}(t),\dot{y}(t),\dot{z}(t),\dot{\bar{z}}(t))=X_F({\theta}(t),{y}(t),{z}(t),{\bar{z}}(t)),\\
\\
({\theta}(0),{y}(0),{z}(0),{\bar{z}}(0))=({\theta}^0,{y}^0,{z}^0,{\bar{z}}^0).\ \ \ \ \ \ \ \ \ \  \ \ \ \
\end{array}
\right.
\end{eqnarray}
\begin{lemma}\label{flowmap}
Let $0<3\sigma<s$ and  $F\in\Gamma_{r,D(s-\sigma,r)}^{\beta,+}$ be  of degree 2. Assume that
\begin{eqnarray}\label{tiaojianforF}
\la F \ra_{r,D(s-\sigma,r)}^{+}<C\sigma.
\end{eqnarray}
Then the solution of Eq. (\ref{flow}) with the initial condition $({\theta}^0,{y}^0,{z}^0,{\bar{z}}^0)\in D(s-3\sigma,\frac{r}{4})$ satisfies $({\theta}(t),{y}(t),{z}(t),{\bar{z}}(t))\in D(s-2\sigma,\frac{r}{2})$ for all $0\leq t\leq 1,$ and we have the estimates
\begin{eqnarray}
\sup_{0\leq t\leq 1}\Big|\frac{\partial y_k(t)}{\partial w_j^0}\Big|&\preceq& \frac{r}{\sigma(1+\ln j)^\beta}\la F \ra_{r,D(s-\sigma,r)}^{+},\ \ \label{flowmap1}\\
\sup_{0\leq t\leq 1}\Big|\frac{\partial w_k(t)}{\partial w_j^0}\Big|
&\preceq &\frac{1}{(1+\ln j)^\beta(1+\ln k)^\beta(1+|j-k|)}\la F \ra_{r,D(s-\sigma,r)}^{+}+\delta_{jk},\label{flowmap2}\\
\sup_{0\leq t\leq 1}\Big|\frac{\partial y_k(t)}{\partial y_j^0}\Big|&\preceq& \frac{1}{\sigma}\la F \ra_{r,D(s-\sigma,r)}^{+}+\delta_{jk},\label{flowmap3}\\
\sup_{0\leq t\leq 1}\Big|\frac{\partial^2 y_k(t)}{\partial w_j^0\partial w_i^0}\Big|
&\preceq&\frac{1}{\sigma(1+\ln j)^\beta(1+\ln i)^\beta(1+|j-i|)}\la F \ra_{r,D(s-\sigma,r)}^{+}\label{flowmap4}
\end{eqnarray}
 with $ w_k =z_k$ or\ $\bar{z}_k  $ and $ w_k^0=z_k^0$\ or\ $\bar{z}_k^0,\ k=1,2,\cdots.$
\end{lemma}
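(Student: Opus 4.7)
The plan is to first verify that trajectories starting in $D(s-3\sigma,r/4)$ remain in $D(s-2\sigma,r/2)$ for all $t\in[0,1]$, and then to estimate the Jacobian (and the selected Hessian components in $y$) of the time-one map by writing out the variational equations and bounding them with the Toeplitz--Lipschitz-type structure encoded in $\Gamma^{\beta,+}_{r,D(s-\sigma,r)}$.

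For the invariance step I would combine the defining bounds of $\la F\ra^{+}$ with a Cauchy estimate in $\theta$ (losing a factor $\sigma^{-1}$) to obtain $\|X_F\|_{r,D(s-2\sigma,r/2)}\preceq \sigma^{-1}\la F\ra^{+}$. The smallness hypothesis (\ref{tiaojianforF}) then makes $|X_F|_r$ as small as desired, and a standard continuity argument confines the trajectory to $D(s-2\sigma,r/2)$ for all $t\in[0,1]$.

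For the derivative estimates I would exploit that $F$ is globally of degree 2, so Hamilton's equations have a block-triangular structure: $\dot\theta$ depends only on $\theta$; the pair $(z,\bar z)$ solves a linear non-homogeneous system whose coefficients depend on $\theta(t)$; and $\dot y$ is a quadratic form in $(z,\bar z)$ plus an affine function of $y$. Integrating in this order and differentiating under the integral, the normal-mode Jacobian $J_{kj}(t)=\partial w_k(t)/\partial w_j^0$ satisfies $\dot J=A(\theta(t))J$, $J(0)=\mathrm{Id}$, where
$$|A_{kl}(\theta(t))|\preceq\frac{\la F\ra^{+}}{(1+\ln k)^{\beta}(1+\ln l)^{\beta}(1+|k-l|)}$$
is inherited from the $\Gamma^{\beta,+}$-bound on $\partial^{2}F/\partial w_k\partial w_l$. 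Expanding $J$ via Duhamel and applying a convolution estimate of the form
$$\sum_{l\geq 1}\frac{1}{(1+\ln l)^{2\beta}(1+|k-l|)(1+|l-j|)}\preceq\frac{1}{1+|k-j|},$$
whose validity is comfortably ensured by the assumption $\beta\geq 2(n+2)$ of (B2), would then yield (\ref{flowmap2}). Bound (\ref{flowmap1}) follows by writing $y_k(t)=y_k^0-\int_0^t(\partial_{\theta_k}F)\,ds$ (after eliminating $\dot\theta$ in favour of $\theta(t)$), differentiating in $w_j^0$, and combining the bound on $J$ with the $(1+\ln j)^{-\beta}$-weighted estimate of $\partial^{2}F/\partial w_l\partial w_j$; the extra $\sigma^{-1}$ enters through the Cauchy estimate used to control $\partial_\theta F$, and no $k$-weight appears because the $\Gamma^{\beta,+}$-norm puts none on $\partial F/\partial y_k$. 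Estimate (\ref{flowmap3}) is then a direct Gronwall bound on the affine $y$-equation, while (\ref{flowmap4}) is obtained by differentiating the $y$-equation twice and re-using the convolution inequality above.

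The only non-routine ingredient is the propagation of the off-diagonal decay $(1+|j-k|)^{-1}$ through the composition of variational maps, i.e.\ the convolution inequality just displayed; every other step is a standard Gronwall-type bootstrap that works precisely because $F$ is at most quadratic in the phase variables and its $\Gamma^{\beta,+}$-norm is small compared with $\sigma$.
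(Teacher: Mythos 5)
Your proposal is correct and follows essentially the same route as the paper: you write $F$ explicitly as a polynomial of degree $2$, exploit the resulting triangular structure ($\dot\theta$ autonomous, $(z,\bar z)$ linear inhomogeneous in $\theta(t)$, $y$ affine), expand the flow by Duhamel/Neumann iteration, and propagate the off-diagonal decay of the Jacobian through the convolution bound $\sum_{l}(1+\ln l)^{-2\beta}(1+|k-l|)^{-1}(1+|l-j|)^{-1}\preceq(1+|k-j|)^{-1}$, which is exactly what the paper packages as the submultiplicativity Lemma~\ref{seminorm} on $\mathcal{M}_{\beta}$. The only cosmetic difference is that the paper writes out the Neumann series for the solution $(\zeta(t),y(t))$ and then differentiates, while you differentiate first and Duhamel-expand the variational equation $\dot J=\mathcal{B}(\theta(t))J$; these are the same computation, and the smallness hypothesis $\la F\ra^{+}<C\sigma$ is used identically in both to make the series converge and the Gronwall factors in the $y$-estimates order one.
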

Before we give the proof of Lemma \ref{flowmap} we  introduce a space of infinite dimensional matrices with decaying coefficients.
We denote by $\mathcal{M}$ the set of infinite matrices $A: \Z_{+}\times \Z_{+} \mapsto  \mathcal{M}_{2\times 2}(\C)$ with values in the space of complex $2\times 2$ matrices
and
$$|A| : = \sup\limits_{i,j \geq 1}\|A_{ij}\|_{HS}<\infty,$$
where $\|\cdot \|_{HS}$ denotes the Hilbert Schmidt norm:
$$\|M\|_{HS}^2 : = \sum\limits_{k,l=1}^2|M_{kl}|^2,$$
where $M\in \mathcal{M}_{2\times 2}(\C)$.
For $\beta>0$ we define $\mathcal{M}_{\beta}$ the subset of $\mathcal{M}$ such that
$[A]_{\beta} <\infty ,$
where the norm   $[\cdot]_{\beta}$
is given by the condition
\begin{eqnarray*}
\sup_{\xi\in\Pi}\|A_{ij}\|_{HS}&\leq &\frac{[ A]_{\beta}}{(1+\ln i)^\beta(1+\ln j)^\beta(1+|i-j|)},\ \ i,j\geq1.
 \end{eqnarray*}
In the following lemma we will suppress the parameter $\xi$  for simplicity.
\begin{lemma}\label{seminorm}
Let $A,B\in\mathcal{M}_{\beta}$  where $\xi\in \Pi$.  Then $A\cdot B\in\mathcal{M}_{\beta}$ and
$
[A \cdot B]_{\beta}\preceq[A]_{\beta}[ B]_{\beta}.
$
 \end{lemma}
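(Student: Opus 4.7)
The plan is to pass from the entry-wise bound defining $\mathcal{M}_\beta$ to one for $AB$ by the standard matrix product manipulation, and then control the resulting sum by exploiting the combination of polynomial off-diagonal decay with the logarithmic decay in the running index.

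First I would write the block product entry-wise, $(AB)_{ik}=\sum_{j\geq 1} A_{ij}B_{jk}$, and use the submultiplicativity of the Hilbert--Schmidt norm on $2\times 2$ matrices, $\|A_{ij}B_{jk}\|_{HS}\leq \|A_{ij}\|_{HS}\|B_{jk}\|_{HS}$, to obtain
\begin{equation*}
\|(AB)_{ik}\|_{HS}\leq \frac{[A]_{\beta}[B]_{\beta}}{(1+\ln i)^{\beta}(1+\ln k)^{\beta}}\cdot S_{ik},\qquad S_{ik}:=\sum_{j\geq 1}\frac{1}{(1+\ln j)^{2\beta}(1+|i-j|)(1+|j-k|)}.
\end{equation*}
Factoring out the logarithmic factors in $i$ and $k$ is the easy part; the entire difficulty has been pushed into bounding $S_{ik}$. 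The next step is the elementary observation that for each $j$ at least one of $|i-j|$, $|j-k|$ is $\geq |i-k|/2$, which yields
\begin{equation*}
\frac{1}{(1+|i-j|)(1+|j-k|)}\leq \frac{2}{1+|i-k|}\Big(\frac{1}{1+|i-j|}+\frac{1}{1+|j-k|}\Big).
\end{equation*}
Summing in $j$ reduces matters to the single-index bound
\begin{equation*}
\sup_{i\geq 1}\sum_{j\geq 1}\frac{1}{(1+\ln j)^{2\beta}(1+|i-j|)}\leq C(\beta),\qquad 2\beta>1,
\end{equation*}
after which $S_{ik}\preceq (1+|i-k|)^{-1}$ follows immediately.

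The main technical step is thus to establish this uniform bound, which I would prove by splitting $j$ into three regions. In the near range $|i-j|\leq i/2$ the factor $(1+\ln j)^{-2\beta}\asymp (1+\ln i)^{-2\beta}$ can be pulled out and the remaining harmonic-type sum contributes at most $C\ln i$, giving a total of order $(1+\ln i)^{1-2\beta}$. In the low range $1\leq j\leq i/2$ one has $|i-j|\geq i/2$ and an integral comparison yields $O((\ln i)^{-2\beta})$. In the high range $j\geq 3i/2$ one has $|i-j|\succeq j$, and the tail $\sum_{j}(j(\ln j)^{2\beta})^{-1}$ converges for $2\beta>1$ by the integral test. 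Each region produces an $i$-uniform constant, which combines with the preceding reduction to give $\|(AB)_{ik}\|_{HS}\preceq [A]_{\beta}[B]_{\beta}/((1+\ln i)^{\beta}(1+\ln k)^{\beta}(1+|i-k|))$, i.e.\ $[AB]_{\beta}\preceq [A]_{\beta}[B]_{\beta}$. Since the paper imposes $\beta\geq 2(n+2)$, the condition $2\beta>1$ holds with enormous margin. The only non-routine ingredient is the uniform convergence of the log-weighted single-index series, which is exactly the type of technical inequality the authors have relegated to the Appendix.
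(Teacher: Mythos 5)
Your argument is correct and follows the same route as the paper's: expand $(AB)_{jl}$ entry-wise, apply Hilbert--Schmidt submultiplicativity, pull out the $\log$-weights in the outer indices, and then control $\sum_k (1+\ln k)^{-2\beta}(1+|j-k|)^{-1}(1+|k-l|)^{-1}$ by observing that one of the two distance factors must exceed $|j-l|/2$, which reduces everything to the uniform one-index bound the paper isolates as Lemma \ref{basic1} in the Appendix (for which your three-region split is essentially the paper's, with a marginally better threshold on $\beta$). The only cosmetic difference is that the paper outsources the convolution-style step to a citation of [GT, Lemma 3.6], whereas you carry it out explicitly.
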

\begin{proof}
For all $j,l\geq1$, $(A\cdot B)_{jl}=\sum_{k}A_{jk}B_{kl}.$ Thus, for $\xi\in \Pi$,
\begin{eqnarray*}
\|(A\cdot B)_{jl}\|_{HS}&\leq&\sum_{k\geq1}\|A_{jk}\|_{HS}\|B_{kl}\|_{HS}\\
&\leq&\sum_{k\geq1}\frac{[A]_{\beta}}{(1+\ln j)^\beta(1+\ln k)^\beta(1+|j-k|)}\frac{[B]_{\beta}}{(1+\ln k)^\beta(1+\ln l)^\beta(1+|k-l|)}\\
&\leq&\frac{[A]_{\beta}^+[ B]_{\beta}^+}{(1+\ln j)^\beta(1+\ln l)^\beta}\sum_{k\geq1}\frac{1}{(1+\ln k)^{2\beta}(1+|k-l|)(1+|j-k|)}\\
&\preceq&\frac{ [A]_{\beta}[B]_{\beta}}{(1+\ln j)^\beta(1+\ln l)^\beta(1+|j-l|)}.
\end{eqnarray*}
The last inequality comes from $\beta\geq 1$, Lemma \ref{basic1} and a similar discussion as Lemma 3.6 in \cite{GT}.
 \end{proof}
\noindent{Proof of Lemma \ref{flowmap}:}
For simplicity we introduce the notations $\zeta_j=(z_j,\bar{z}_j)$ and $\zeta=(\zeta_j)_{j\geq1}.$ Then $F$ reads
\begin{eqnarray}\label{formforF}
F(\theta,y,\zeta)=a_0(\theta)+a_1(\theta)\cdot y+b(\theta)\cdot \zeta+\frac12(B(\theta)\zeta)\cdot \zeta
\end{eqnarray}
with $a_0(\theta)=F(\theta,0,0),\ a_1(\theta)=\nabla_y F(\theta,0,0),\ b(\theta)=\nabla_{\zeta} F(\theta,0,0)$ and $B=(B_{ij})$ is the infinite matrix where
\begin{eqnarray}\label{bt}
B_{ij}(\theta)=\left(
\begin{array}{cc}
\frac{\partial^2 F}{\partial z_i\partial z_j}(\theta,0,0) & \frac{\partial^2 F}{\partial z_i\partial \bar{z}_j}(\theta,0,0)\\
\frac{\partial^2 F}{\partial \bar{z}_i\partial z_j}(\theta,0,0) & \frac{\partial^2 F}{\partial \bar{z}_i\partial \bar{z}_j}(\theta,0,0)
\end{array}
\right).
\end{eqnarray}
The flow $X^t_F$
exists for $0 \leq t \leq 1$ and maps $D(s-3\sigma, r/4)$ into
$D(s-2\sigma, r/2)$. In the sequel we write
\begin{eqnarray}\label{thetat}({\theta}(t),{y}(t),{\zeta}(t))=X_F^t({\theta}^0,{y}^0,{\zeta}^0).
\end{eqnarray}
From the equation   $\dot{\theta}=\nabla_y F(\theta,y,\zeta)$
and (\ref{tiaojianforF}),  we have the bound
$\sup_{0\leq t\leq1}|\Im \theta(t)|<s-2\sigma$. \\
\indent We now turn to the equation in $\zeta$. To solve
$$ \dot \zeta(t) =J\nabla_{\zeta} F(\theta, y, \zeta)(t)=b_1(t)+\mathcal{B}(t)\zeta(t), \ \ \zeta(0) = \zeta_0, $$
where $b_1(t)=Jb(\theta(t))$ and $\mathcal{B}(t)=JB(\theta(t))$ with
\begin{eqnarray*}
J=diag\left\{\left(
\begin{array}{cc}
0 & 1\\
-1 & 0
\end{array}
\right)\right\}_{j\geq1},
\end{eqnarray*}
we iterate the integral formulation of the problem
$$\zeta(t) =\zeta^0+\int_0^t(b_1(t_1)+\mathcal{B}(t_1)\zeta(t_1))dt_1, $$
and formally obtain
\begin{eqnarray}\label{zt}
\zeta(t) =b^\infty(t)+(1+\mathcal{B}^\infty(t))\zeta^0,
\end{eqnarray}
where
\begin{eqnarray}\label{binfty}b^\infty(t)=\sum_{k\geq1}\int_0^t\int_0^{t_1}\cdots\int_0^{t_{k-1}}\prod_{1\leq j\leq k-1}\mathcal{B}(t_j)b_1(t_k)dt_k\cdots dt_1,\end{eqnarray}
and
\begin{eqnarray}\label{bbinfty}\mathcal{B}^\infty(t)=\sum_{k\geq1}\int_0^t\int_0^{t_1}\cdots\int_0^{t_{k-1}}\prod_{1\leq j\leq k}\mathcal{B}(t_j)dt_k\cdots dt_1.\end{eqnarray}
It is clear that there exists $C>0$ so that
$$\sup_{0\leq t\leq1}\|\mathcal{B}(t)\|_{\mathcal{L}(\ell^{2,p}, \ell^{2,p})}\leq C,$$
and thus, for all $0 \leq t\leq1$ the series (\ref{binfty}) converges by
\begin{eqnarray*}
\|b^\infty(t)\|_{\ell^{2,p}} \leq \sup_{0\leq t\leq1}\|b_1(t)\|_{\ell^{2,p}}\sum_{k\geq1}\frac{(4\la F\ra^+)^{k-1}}{k!}\leq e\sup_{0\leq t\leq1}\|b_1(t)\|_{\ell^{2,p}}.
\end{eqnarray*}
Similarly for $0\leq t\leq1$,
$\|\mathcal{B}^\infty(t)\|_{\mathcal{L}(\ell^{2,p}, \ell^{2,p})}\leq 4e\la F\ra^+$.
As a conclusion, the formula (\ref{zt}) makes sense. In fact we can say more about $\mathcal{B}^{\infty}(t)$. For $|\Im \theta|<s-2\sigma$,
$$|\mathcal{B}_{ij}^{11}|=|B_{ij}^{21}|=\Big|\frac{\partial^2 F}{\partial \bar{z}_i\partial z_j}(\theta,0,0) \Big|\leq \frac{\la F\ra^+}{(1+\ln i)^\beta(1+\ln j)^\beta(1+|i-j|)}.
 $$ Similar estimates hold for $\mathcal{B}_{ij}^{12}$, $\mathcal{B}_{ij}^{21}$ and $\mathcal{B}_{ij}^{22}$. Recall that $\mathcal{B}(t) =JB(\theta(t))$ and
 $|\Im \theta(t)|<s-2\sigma$ for $0\leq t\leq 1$.
 It follows $\mathcal{B}(t)\in \mathcal{M}_{\beta}$ and
$\sup\limits_{0\leq t\leq 1}[ \mathcal{B}(t)]_{\beta}\leq \la F\ra^+$.
Hence, by Lemma \ref{seminorm} and (\ref{bbinfty}),
\begin{eqnarray}\label{gujiforbinfty}
\sup\limits_{0\leq t\leq 1}[\mathcal{B}^\infty(t)]_{\beta}\leq e^{\la F\ra^+}-1\leq e\la F\ra^+_{r,D(s-\sigma,r)}.
\end{eqnarray}
\indent In the following we study the equation in $y$,
$ \dot y(t) =-\nabla_\theta F(\theta, y, \zeta)(t), \ \ y(0) = y_0. $ From (\ref{formforF}),
$$ \dot y(t) = f(t)+g(t) y(t), \ \ y(0) = y_0, $$
where $f(t)=-\nabla_{\theta} a_0(\theta(t))-\nabla_{\theta}b(\theta(t))\zeta-\frac12 (\nabla_{\theta}B(\theta(t))\zeta)\cdot \zeta $
and $g(t) =-\nabla_{\theta}\nabla_{y} F(\theta,0,0)$.
As above, we have formally
\begin{eqnarray}\label{yt}
y(t) =f^\infty(t)+(1+g^\infty(t))y^0,
\end{eqnarray}
where
\begin{eqnarray*}\label{finfty}f^\infty(t)=\sum_{k\geq1}\int_0^t\int_0^{t_1}\cdots\int_0^{t_{k-1}}\prod_{1\leq j\leq k-1}g(t_j)f(t_k)dt_k\cdots dt_1,\end{eqnarray*}
and
\begin{eqnarray*}\label{ginfty}g^\infty(t)=\sum_{k\geq1}\int_0^t\int_0^{t_1}\cdots\int_0^{t_{k-1}}\prod_{1\leq j\leq k}g(t_j)dt_k\cdots dt_1.\end{eqnarray*}
From Cauchy we have
$$\sup_{0\leq t\leq1}\|g(t)\|\preceq \frac{1}{\sigma}\max_{1\leq j\leq n}\Big|\frac{\partial F}{\partial y_j}(\theta(t),0,0)\Big|\leq \frac{1}{\sigma}\la F\ra^+_{r,D(s-\sigma,r)}:= \kappa,$$
which follows that for $0 \leq t\leq1$,
\begin{eqnarray*}
\|f^\infty(t)\|&\leq&\sum_{k\geq1}\int_0^t\int_0^{t_1}\cdots\int_0^{t_{k-1}}\prod_{1\leq j\leq k-1}\kappa^{k-1}\|f(t)\|dt_k\cdots dt_1\\
&\leq&\sup_{0\leq t\leq1}\|f(t)\|\sum_{k\geq1}\frac{\kappa^{k-1}}{k!}
\preceq \sup_{0\leq t\leq1}\|f(t)\|.
\end{eqnarray*}
Similarly for $0\leq t\leq1,$
\begin{eqnarray}\label{ginfty}
\|g^\infty(t)\| \preceq \frac{ \la F\ra^+}{\sigma}.
\end{eqnarray}
Therefore (\ref{yt}) makes sense.\\
\indent  Now we turn to show the estimates on the solutions of the equations (\ref{flow}).
By (\ref{zt}),
\begin{eqnarray}\label{ztdi}
\nabla_{\zeta_j^0}\zeta_k(t) =\left(\begin{array}{cc}
1 & 0\\
0 & 1
\end{array}\right)
\delta_{kj}+\mathcal{B}_{kj}^\infty(t),
\end{eqnarray}
and (\ref{gujiforbinfty}) we have (\ref{flowmap2}).
From
$
y_k(t) =f_k^\infty(t)+y_k^0+\sum_{1\leq j\leq n}g_{jk}^\infty(t)y_j^0
$
and (\ref{ginfty}) we obtain (\ref{flowmap3}).
In the following we give the estimates (\ref{flowmap1}) and (\ref{flowmap4}). \\
\indent Since $g$ and $g^\infty$ do not depend on $\zeta$, we obtain that $\displaystyle\frac{\partial y(t)}{\partial z_j^0}=\frac{\partial f^\infty}{\partial z_j^0}$. Now by the definition of $f^\infty$, we deduce  that, for $0\leq t\leq1,$
\begin{eqnarray*}\label{}
\Big\|\frac{\partial y(t)}{\partial z_j^0}\Big\|&=&\Big\|\sum_{k\geq1}\int_0^t\int_0^{t_1}\cdots\int_0^{t_{k-1}}\prod_{1\leq j\leq k-1}g(t_j)\frac{\partial f(t_k)}{\partial z_j^0}dt_k\cdots dt_1\Big\|\\
&\leq& \sum_{k\geq1}\int_0^1\int_0^{t_1}\cdots\int_0^{t_{k-1}}\prod_{1\leq j\leq k-1}\|g(t_j)\|\Big\|\frac{\partial f(t_k)}{\partial z_j^0}\Big\|dt_k\cdots dt_1\\
&\leq& \sum_{k\geq1}\int_0^t\int_0^{t_1}\cdots\int_0^{t_{k-1}}B^{k-1}\Big\|\frac{\partial f(t_k)}{\partial z_j^0}\Big\|dt_k\cdots dt_1\\
&\preceq&  \sup_{0\leq t \leq1}|\nabla_{\zeta_j^0}f(t)|.
\end{eqnarray*}
From a straightforward computation we have, for all $1\leq l\leq n$,
\begin{eqnarray}\label{zk}
\nabla_{\zeta_k}f_l(t)=-\partial_{\theta_l}b_k(\theta(t))-\sum_{i\geq 1}\partial_{\theta_l}B_{ki}(\theta(t))\zeta_i(t),\ \ {\rm with}\ b_k(\theta)=\nabla_{\zeta_k}F(\theta,0,0).
\end{eqnarray}
By Cauchy we obtain that
\begin{eqnarray*}\label{}
\sup_{0\leq t\leq1}|\partial_{\theta_l}b_k(\theta(t))|
\preceq\frac{1}{\sigma}\sup\limits_{|\Im \theta|<s-2\sigma}|\nabla_{\zeta_k}F(\theta,0,0)|\preceq\frac{1}{\sigma}\frac{r\la F \ra^+_{r,D(s-\sigma,r)}}{k(1+\ln k)^\beta}.
\end{eqnarray*}
For the second term in (\ref{zk}) we obtain by Cauchy
\begin{eqnarray*}\label{}
\sup_{0\leq t\leq1}\|\partial_{\theta_l}B_{ki}(\theta(t))\|&\preceq &\frac{\la F \ra^+_{r,D(s-\sigma,r)}}{\sigma(1+|k-i|)(1+\ln k)^\beta(1+\ln i)^\beta}.
\end{eqnarray*}
 Thus,
\begin{eqnarray*}\label{}
\|\nabla_{\zeta_k}f_l(t)\|&\preceq& \frac{r\la F \ra^+_{r,D(s-\sigma,r)}}{\sigma k(1+\ln k)^\beta}+\sum_{i\geq1}\frac{\la F \ra^+_{r,D(s-\sigma,r)}}{\sigma(1+|k-i|)(1+\ln k)^\beta(1+\ln i)^\beta}\|\zeta_i\|\\
&\preceq& \frac{\la F \ra^+_{r,D(s-\sigma,r)}}{\sigma(1+\ln k)^\beta}\Big({r}+\sum_{i\geq1}\frac{\|\zeta_i\|}{(1+|k-i|)(1+\ln i)^\beta}\Big)\\
&\preceq& \frac{r\la F \ra^+_{r,D(s-\sigma,r)}}{\sigma(1+\ln k)^\beta}.
\end{eqnarray*}
Further, from
$
\nabla_{\zeta_j^0}f_l(t)=\sum\limits_{k\geq 1}\nabla_{\zeta_j^0}\zeta_k\nabla_{\zeta_k}f_l(t),
$
we have
\begin{eqnarray*}\label{}
\|\nabla_{\zeta_j^0}f_l(t)\|&\leq&\sum_{k\geq 1}\|\nabla_{\zeta_j^0}\zeta_k\|\|\nabla_{\zeta_k}f_l(t)\|\\
&\preceq&\sum_{k\geq 1,k\neq j}\frac{ \la F \ra^+_{r,D(s-\sigma,r)}}{(1+|k-j|)(1+\ln k)^\beta(1+\ln j)^\beta}
\cdot \frac{r\la F \ra^+_{r,D(s-\sigma,r)}}{\sigma(1+\ln k)^\beta}+ \frac{r\la F \ra^+_{r,D(s-\sigma,r)}}{\sigma(1+\ln j)^\beta}\\
&\preceq& \frac{r\la F \ra^+_{r,D(s-\sigma,r)}}{\sigma(1+\ln j)^\beta}\Big(1+\sum_{k\geq 1,k\neq j}\frac{1}{(1+|k-j|)(1+\ln k)^{2\beta}}\Big)\\
&\preceq&\frac{ r\la F \ra^+_{r,D(s-\sigma,r)}}{\sigma(1+\ln j)^\beta}.
\end{eqnarray*}
The above third inequality comes from Lemma \ref{basic1} and $\beta\geq 1$.
It then follows
\begin{eqnarray*}\label{}
\sup_{0\leq t\leq 1}\Big\|\frac{\partial y_k(t)}{\partial z_j^0}\Big\|\preceq\frac{ r\la F \ra^+_{r,D(s-\sigma,r)}}{\sigma(1+\ln j)^\beta}.
\end{eqnarray*}
\indent It remains to show (\ref{flowmap4}).
First we have
\begin{eqnarray*}\label{}
\sup_{0\leq t\leq 1}\Big|\frac{\partial^2 y_k(t)}{\partial z_j^0\partial z_i^0}\Big| \preceq\sup_{0\leq t\leq1}\|\nabla_{\zeta_i^0}\nabla_{\zeta_j^0}f(t)\|.
\end{eqnarray*}
Note $\|\nabla_{\zeta_i^0}\nabla_{\zeta_j^0}f(t)\|=\|\nabla_\theta B_{ij}(\theta(t))\|$ and use Cauchy in $\theta$,
\begin{eqnarray*}\label{}
\Big|\frac{\partial^2 y_k(t)}{\partial z_j^0\partial z_i^0}\Big| \preceq\frac{ \la F \ra^+_{r,D(s-\sigma,r)}}{\sigma(1+\ln i)^\beta(1+\ln j)^\beta(1+|i-j|)}.
\end{eqnarray*}
\qed\\
\indent Similarly, we have
\begin{lemma}\label{daoshul}
Under the assumptions of Lemma \ref{flowmap} and the condition $\la F \ra_{r,D(s-\sigma,r)}^{+,\mathfrak{L}}\leq C\sigma$,  the solution of (\ref{flow}) satisfies
\begin{eqnarray*}
\sup_{0\leq t\leq 1}\Big|\frac{\partial y_k(t)}{\partial w_j^0}\Big|^{\mathfrak{L}}&\preceq &\frac{ r}{\sigma(1+\ln j)^\beta}\la F \ra_{r,D(s-\sigma,r)}^{+,\mathfrak{L}},\\
\sup_{0\leq t\leq 1}\Big|\frac{\partial w_k(t)}{\partial w_j^0}\Big|^{\mathfrak{L}}&\preceq&\frac{1}{(1+\ln j)^\beta(1+\ln k)^\beta(1+|j-k|)}\la F \ra_{r,D(s-\sigma,r)}^{+,\mathfrak{L}},\ \\
\sup_{0\leq t\leq 1}\Big|\frac{\partial y_k(t)}{\partial y_j^0}\Big|^{\mathfrak{L}}&\preceq &\frac{1}{\sigma}\la F \ra_{r,D(s-\sigma,r)}^{+,\mathfrak{L}},\\
\sup_{0\leq t\leq 1}\Big|\frac{\partial^2 y_k(t)}{\partial w_j^0\partial w_i^0}\Big|^{\mathfrak{L}}&\preceq &\frac{1}{\sigma(1+\ln j)^\beta(1+\ln i)^\beta(1+|j-i|)}\la F \ra_{r,D(s-\sigma,r)}^{+,\mathfrak{L}}
\end{eqnarray*}
 with $ w_k =z_k$ or\ $\bar{z}_k  $ and $ w_k^0=z_k^0$\ or\ $\bar{z}_k^0,\ k=1,2,\cdots.$
\end{lemma}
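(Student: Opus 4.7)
\noindent\textit{Proof proposal for Lemma \ref{daoshul}.} The plan is to follow the Duhamel-type integral representations established in the proof of Lemma \ref{flowmap} and simply propagate Lipschitz dependence in $\xi$ through them. Recall that the $\zeta$-component of the flow was written as
$\zeta(t)=b^{\infty}(t)+(1+\mathcal{B}^{\infty}(t))\zeta^{0}$
where $b^{\infty}$ and $\mathcal{B}^{\infty}$ are the Neumann-type series built from iterated integrals of $b_1(t)=Jb(\theta(t))$ and $\mathcal{B}(t)=JB(\theta(t))$, and similarly $y(t)=f^{\infty}(t)+(1+g^{\infty}(t))y^{0}$. The key algebraic input is the Lipschitz Leibniz rule $[AB]^{\mathfrak{L}}\leq[A]^{\mathfrak{L}}[B]+[A][B]^{\mathfrak{L}}$, together with the multiplicativity of the $[\cdot]_\beta$-norm from Lemma \ref{seminorm}, which will allow a term-by-term bound of each summand in the Neumann series.

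First I would record the pointwise and matrix-norm inputs at the Lipschitz level. Differentiating the coefficient matrix $B_{ij}(\theta)$ defined in (\ref{bt}) in $\xi$ (or rather applying $\Delta_{\xi\eta}/|\xi-\eta|$) and using the bounds that define $\la F\ra^{+,\mathfrak{L}}$, one obtains, uniformly in $t\in[0,1]$,
\begin{equation*}
\sup_{0\leq t\leq 1}[\mathcal{B}(t)]_{\beta}^{\mathfrak{L}}\preceq\la F\ra_{r,D(s-\sigma,r)}^{+,\mathfrak{L}},\qquad \sup_{0\leq t\leq 1}\|g(t)\|^{\mathfrak{L}}\preceq\frac{1}{\sigma}\la F\ra_{r,D(s-\sigma,r)}^{+,\mathfrak{L}},
\end{equation*}
and likewise componentwise Lipschitz bounds on $b_1(t)$ and $f(t)$ in the same pattern as the sup-norm bounds used in Lemma \ref{flowmap}. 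Here the Lipschitz contribution coming from the implicit $\xi$-dependence of $\theta(t)$ is absorbed because $\theta$ is Lipschitz in $\xi$ with constant controlled by $\la F\ra^{+,\mathfrak{L}}/\sigma$ (from $\dot\theta=\nabla_y F$), and that contribution is multiplied by a sup-norm derivative in $\theta$ which is already of size $\la F\ra^{+}\preceq\sigma$ by hypothesis; such cross-terms are therefore no worse than the direct Lipschitz contribution.

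Next I would plug these inputs into the series defining $\mathcal{B}^{\infty}(t)$, $b^{\infty}(t)$, $g^{\infty}(t)$ and $f^{\infty}(t)$. For $\mathcal{B}^{\infty}(t)$ the Lipschitz Leibniz rule applied to the $k$-fold product $\prod_{j}\mathcal{B}(t_j)$ gives at most $k$ summands each of the shape $[\mathcal{B}]_\beta^{k-1}[\mathcal{B}]_\beta^{\mathfrak{L}}$ in the $[\cdot]_\beta$-seminorm (using Lemma \ref{seminorm} for products). Summing the iterated integrals produces the bound
\begin{equation*}
\sup_{0\leq t\leq 1}[\mathcal{B}^{\infty}(t)]_\beta^{\mathfrak{L}}\preceq\la F\ra_{r,D(s-\sigma,r)}^{+,\mathfrak{L}}\sum_{k\geq 1}\frac{k\,(C\sigma)^{k-1}}{k!}\preceq\la F\ra_{r,D(s-\sigma,r)}^{+,\mathfrak{L}},
\end{equation*}
where smallness of $\la F\ra^{+}\leq C\sigma$ ensures the geometric factor remains bounded. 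Combined with $\partial_{\zeta_j^0}\zeta_k(t)=\delta_{kj}I+\mathcal{B}_{kj}^{\infty}(t)$ from (\ref{ztdi}), this yields the second inequality of the lemma. The analogous computation for $f^{\infty}$, $g^{\infty}$ (plus the $\sigma^{-1}$ arising from the Cauchy estimate in the $\theta$-derivative, exactly as in the sup-norm proof) gives the first and third inequalities.

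Finally, for the second-order estimate I would differentiate once more: $\partial^{2}_{\zeta_j^{0}\zeta_i^{0}} y_k(t)$ reduces, as in the proof of Lemma \ref{flowmap}, to a bound on $\nabla_\theta B_{ij}(\theta(t))$, and applying $\Delta_{\xi\eta}/|\xi-\eta|$ together with a Cauchy estimate in $\theta$ gives the fourth inequality with the $(1+|i-j|)(1+\ln i)^{\beta}(1+\ln j)^{\beta}$ denominator inherited from the $\la\cdot\ra^{+,\mathfrak{L}}$ definition. I expect the main (though mild) obstacle to be the careful bookkeeping of the two sources of Lipschitz dependence -- direct dependence of $F$ on $\xi$ and indirect dependence via $\theta(t,\xi)$ -- when applying the Leibniz rule inside each Neumann iterate; the smallness condition $\la F\ra^{+}\leq C\sigma$ is precisely what keeps the cross-terms under control and makes the Neumann series convergent after differentiation.
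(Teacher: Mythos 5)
Your proposal is correct and follows the approach the paper intends. The paper offers no written proof for this lemma — it merely says ``Similarly, we have'' after the proof of Lemma \ref{flowmap} — and your outline of propagating the Lipschitz seminorm through the Neumann-series representations $\zeta(t)=b^{\infty}(t)+(1+\mathcal{B}^{\infty}(t))\zeta^{0}$ and $y(t)=f^{\infty}(t)+(1+g^{\infty}(t))y^{0}$ via a Lipschitz Leibniz rule, using Lemma \ref{seminorm} for products and absorbing the indirect $\theta(t,\xi)$-contributions with the smallness hypotheses $\la F\ra^{+},\la F\ra^{+,\mathfrak{L}}\leq C\sigma$, is exactly the elaboration the paper expects.
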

The proof of Lemma \ref{flowmap} implies
\begin{Corollary}
The time 1 map $X_F^1$ reads
\begin{eqnarray*}
\left(
\begin{array}{c}
\theta\\
y\\
\zeta
\end{array}
\right)\mapsto\left(
\begin{array}{c}
K(\theta)\\
L(\theta,\zeta)+M(\theta)\zeta+S(\theta)y\\
T(\theta)+U(\theta)\zeta
\end{array}
\right),\end{eqnarray*}
where $L(\theta,\zeta)$ is quadratic in $\zeta$, $M(\theta)$ and $U(\theta)$ are bounded linear operators from
$\ell^{2,p}\times\ell^{2,p}$ into $\R^n$ and $\ell^{2,p}\times\ell^{2,p}$ respectively,  and $S(\theta)$ is a bounded linear map from $\R^n$ to $\R^n$.
\end{Corollary}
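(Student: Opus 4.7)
The plan is to read off the claimed block structure directly from Hamilton's equations for the quadratic Hamiltonian $F$ and from the formulas already produced inside the proof of Lemma \ref{flowmap}. Since $F$ has the form $F=a_0(\theta)+a_1(\theta)\cdot y+b(\theta)\cdot\zeta+\tfrac12(B(\theta)\zeta)\cdot\zeta$, the equations of motion are
\begin{equation*}
\dot\theta=a_1(\theta),\qquad \dot\zeta=Jb(\theta)+JB(\theta)\zeta,\qquad \dot y=f(t)+g(t)\,y,
\end{equation*}
where $g(t)=-\nabla_\theta a_1(\theta(t))$ and $f(t)=-\nabla_\theta a_0(\theta(t))-(\nabla_\theta b(\theta(t)))\zeta(t)-\tfrac12(\nabla_\theta B(\theta(t))\zeta(t))\cdot\zeta(t)$. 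This is a triangular system: $\theta$ evolves autonomously, then $\zeta$ satisfies a linear inhomogeneous equation with $\theta$-dependent coefficients, and finally $y$ satisfies a linear equation whose forcing is polynomial of degree $\le 2$ in $\zeta$.

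Accordingly, I would set $K(\theta^0):=\theta(1)$, which depends only on $\theta^0$ by the first equation. For the $\zeta$ component I would quote formula (\ref{zt}) from the proof of Lemma \ref{flowmap}, namely $\zeta(1)=b^\infty(1)+(1+\mathcal B^\infty(1))\zeta^0$, and define $T(\theta^0):=b^\infty(1)$ and $U(\theta^0):=1+\mathcal B^\infty(1)$. Boundedness of $U(\theta^0)$ on $\ell^{2,p}\times\ell^{2,p}$ follows from the estimate $\sup_{0\le t\le 1}\|\mathcal B^\infty(t)\|_{\mathcal L(\ell^{2,p},\ell^{2,p})}\le 4e\langle F\rangle^+$ already established there, and this estimate is valid for every $p\ge 0$ by the same argument since $B(\theta)$ is a bounded matrix operator on each $\ell^{2,p}\times\ell^{2,p}$ when $F\in\Gamma^{\beta,+}_{r,D(s-\sigma,r)}$.

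For the $y$ component I would use formula (\ref{yt}), $y(1)=f^\infty(1)+(1+g^\infty(1))y^0$, and set $S(\theta^0):=1+g^\infty(1)$, which is a bounded linear endomorphism of $\R^n$ by the estimate on $\|g^\infty(t)\|$. The remaining piece $f^\infty(1)$ depends on $\theta^0$ and on $\zeta(\cdot)$; but since $\zeta(t)$ is affine in $\zeta^0$ with coefficients depending on $\theta^0$, and $f(t)$ is at most quadratic in $\zeta(t)$, the contribution $f^\infty(1)$ splits naturally into a part homogeneous of degree $2$ in $\zeta^0$, a part linear in $\zeta^0$, and a part independent of $\zeta^0$; the degree-$2$ part defines $L(\theta^0,\zeta^0)$, and the degree-$\le 1$ part together with the $\theta^0$-only contribution defines $M(\theta^0)\zeta^0$ (with the constant-in-$\zeta^0$ piece absorbed into $L(\theta^0,\cdot)$ at $\zeta^0=0$). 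Boundedness of $M(\theta^0):\ell^{2,p}\times\ell^{2,p}\to\R^n$ follows from the bounds on $\nabla_{\zeta_j^0}y_k(1)$ in (\ref{flowmap1}) of Lemma \ref{flowmap}, which in particular yield an $\ell^2$-type estimate in $j$.

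I do not expect any serious obstacle: the proof is essentially a bookkeeping exercise that collects the three variation-of-parameters formulas produced for Lemma \ref{flowmap} and reorganises them by degree in $(y^0,\zeta^0)$. The only minor subtlety is checking that the operators $M(\theta),U(\theta),S(\theta)$ act between the correct spaces, which is immediate from the Hilbert--Schmidt-type estimates collected in Lemma \ref{seminorm} and the operator-norm bounds on $\mathcal B^\infty,g^\infty$ in the proof of Lemma \ref{flowmap}.
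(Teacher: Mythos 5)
Your proposal is correct and is precisely the bookkeeping the paper has in mind: the paper itself gives no proof beyond the remark that ``the proof of Lemma \ref{flowmap} implies'' the Corollary, and what you have written is the natural unwinding of the triangular structure $\dot\theta=a_1(\theta)$, $\dot\zeta=Jb+JB\zeta$, $\dot y=f+gy$, reading $K,T,U$ off from (\ref{zt}), $S$ from (\ref{yt}), and $L,M$ from the degree-$\le 2$ dependence of $f^\infty(1)$ on $\zeta^0$. One small caution on your aside about $p\ge 0$: the coefficient decay encoded in $F\in\Gamma^{\beta,+}_{r,D(s-\sigma,r)}$ gives, via Lemma \ref{basic1} and a Schur test, boundedness of $B(\theta)$ on $\ell^2\times\ell^2$ (i.e.\ $p=0$), but by itself it does not give boundedness on $\ell^{2,p}\times\ell^{2,p}$ for $p>0$; in the paper that boundedness comes from the separate vector-field estimate $\|X_F\|_{r,D(s-\sigma,r)}<\infty$ produced by Lemma \ref{XFesti}, which is what actually backs the line ``it is clear that $\sup_t\|\mathcal B(t)\|_{\mathcal L(\ell^{2,p},\ell^{2,p})}\le C$.'' For the Corollary as stated (phase space $\mathcal M^p$ with $p\ge 2$) this distinction is harmless, since you correctly cite the paper's established bound on $\mathcal B^\infty$; the extension to $p=0$ used later in Theorem \ref{KAMapp} is a separate argument.
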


\subsection{Composition estimates}
\begin{Proposition}\label{map}
Let $0<\eta<1/8$  and $0 < \sigma < s$, $R\in\Gamma^\beta_{\eta r,D(s-2\sigma,4\eta r)}$  and $F \in \Gamma^{\beta,+}_{ r,D(s-\sigma,r)}$ with F of degree 2. Assume that
\begin{eqnarray}\label{gapflowmap}
\la F \ra_{r,D(s-\sigma,r)}^{+}+\la F \ra_{r,D(s-\sigma,r)}^{+,\mathfrak{L}}<C\sigma\eta^2.
\end{eqnarray}
 Then $R\circ X^1_F\in\Gamma^\beta_{\eta r,D(s-5\sigma,\eta r)}$
 and
 \begin{eqnarray}\label{rfsemi}
\la R\circ X^1_F \ra_{\eta r,D(s-5\sigma,\eta r)}&\preceq & \la R \ra_{\eta r,D(s-2\sigma,4\eta r)},
\end{eqnarray}
 \begin{eqnarray}\label{rflsemi}
\la R\circ X^1_F \ra_{\eta r,D(s-5\sigma,\eta r)}^{\mathfrak{L}}&\preceq &  \la R \ra_{\eta r,D(s-2\sigma,4\eta r)}+\la R \ra_{\eta r,D(s-2\sigma,4\eta r)}^{\mathfrak{L}}.
\end{eqnarray}
\end{Proposition}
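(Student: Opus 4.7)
The plan is to verify the four defining inequalities for $\langle R\circ X_F^1\rangle_{\eta r,D(s-5\sigma,\eta r)}$ (and their Lipschitz analogues) by combining the chain rule with the pointwise bounds on the time-$1$ flow map given in Lemma \ref{flowmap} and Lemma \ref{daoshul}, supplemented by Cauchy estimates used to extract second $y$-derivatives of $R$ that are not provided directly by the definition of $\Gamma^\beta$.

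First, I would check that $X_F^1$ maps $D(s-5\sigma,\eta r)$ into $D(s-2\sigma,4\eta r)$, so that the composition is analytic on the required domain. Using the representation $\zeta(1)=b^\infty+(1+\mathcal{B}^\infty)\zeta^0$ and $y(1)=f^\infty+(1+g^\infty)y^0$ from the proof of Lemma \ref{flowmap}, together with the bounds $\|b^\infty\|_p,\|f^\infty\|\lesssim \langle F\rangle^+$ and $\|\mathcal{B}^\infty\|,\|g^\infty\|/\sigma\lesssim \langle F\rangle^+/\sigma$, the hypothesis $\langle F\rangle^+\lesssim \sigma\eta^2$ makes the $\zeta$- and $y$-displacements negligible compared with the gap $(4\eta r)^2-(\eta r)^2$ and $4\eta r-\eta r$. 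The loss of $2\sigma$ in the angle variable is absorbed by Cauchy estimates in $\theta$ of the flow components. Next, I would exploit the structural form from the Corollary, $X_F^1(\theta,y,\zeta)=(K(\theta),L(\theta,\zeta)+M(\theta)\zeta+S(\theta)y,T(\theta)+U(\theta)\zeta)$, to observe that $\partial_{y^0}K=\partial_{y^0}(T+U\zeta)=0$ and that $\zeta(1)$ is affine in $\zeta^0$, so $\partial^2_{z_j^0 z_l^0}\zeta(1)=0$. This strips many terms out of the chain rule.

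With these preparations I would verify the four conditions in order. The $L^\infty$ bound $\|R\circ X_F^1\|\le (\eta r)^2\langle R\rangle$ is immediate from Step 1 and the original bound on $\|R\|_{D(s-2\sigma,4\eta r)}$. For $\partial_{y_j}(R\circ X_F^1)=\sum_{k=1}^n(\partial_{y_k}R)S_{kj}$, the bound $\|S\|\lesssim 1+\langle F\rangle^+/\sigma$ from the linear variation in $y$ and the smallness hypothesis give the required estimate. For $\partial_{z_j^0}(R\circ X_F^1)$, the chain rule yields
\begin{equation*}
\partial_{z_j^0}(R\circ X_F^1)=\sum_{k=1}^n(\partial_{y_k}R)\,\partial_{z_j^0}y_k(1)+\sum_{k\ge 1}\bigl[(\partial_{z_k}R)\partial_{z_j^0}z_k(1)+(\partial_{\bar z_k}R)\partial_{z_j^0}\bar z_k(1)\bigr],
\end{equation*}
and substituting $|\partial_{y_k}R|\le\langle R\rangle$, $|\partial_{z_k}R|\le 4\eta r(1+\ln k)^{-\beta}\langle R\rangle$, together with (\ref{flowmap1})--(\ref{flowmap2}), the off-diagonal sum is controlled via $\sum_k(1+\ln k)^{-2\beta}(1+|j-k|)^{-1}\preceq 1$ from Lemma \ref{basic1}; the $y$-contribution is absorbed by $r\langle F\rangle^+/\sigma\le \eta$.

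For the second-derivative estimate $\|\partial^2_{w_j w_l}(R\circ X_F^1)\|$, the chain rule produces three families of terms: (a) pure second derivatives of $R$ contracted with two first-order flow derivatives, (b) first derivatives of $R$ against second-order flow derivatives, and (c) cross terms. The hypothesis on $R$ gives the pure $z$-$z$ and $z$-$\bar z$ second derivatives directly, while $\partial^2_{y_k y_{k'}}R$ and $\partial^2_{y_k z_{k'}}R$ must be obtained by Cauchy estimates in $y$, gaining a factor $(\eta r)^{-2}$ from the gap between the $y$-balls of radii $4\eta r$ and $\eta r$. Combining with (\ref{flowmap1})--(\ref{flowmap4}) and Lemma \ref{basic1}, each family produces the target logarithmic decay $(1+\ln j)^{-\beta}(1+\ln l)^{-\beta}$; the quadratic flow terms carry a factor $(r\langle F\rangle^+/\sigma)^2(\eta r)^{-2}=(\langle F\rangle^+/(\sigma\eta))^2$, which is exactly why $\eta^2$ (and not merely $\eta$) appears in the smallness condition (\ref{gapflowmap}). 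The Lipschitz bound (\ref{rflsemi}) follows by writing $\Delta_{\xi\eta}(R\circ X_F^1)=(\Delta_{\xi\eta}R)\circ X_F^1(\xi)+\bigl[R(X_F^1(\xi))-R(X_F^1(\eta))\bigr]$, applying the chain rule to the second piece, and using Lemma \ref{daoshul} in place of Lemma \ref{flowmap} for the flow derivatives.

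The main obstacle I anticipate is the second-derivative estimate: one must simultaneously convert hypotheses on single derivatives of $R$ into Cauchy bounds on second $y$-derivatives without losing more than one factor of $(\eta r)^{-2}$, and perform the double convolution sums $\sum_{k,k'}(1+\ln k)^{-2\beta}(1+\ln k')^{-2\beta}(1+|j-k|)^{-1}(1+|l-k'|)^{-1}$ so that they close back under the decay index $\beta\ge 2(n+2)$ via repeated application of Lemma \ref{basic1}. Everything else is bookkeeping, but this step is the point where the balance between the smallness of $\langle F\rangle^+$, the parameter $\eta^2$, and the logarithmic decay must be made to hold exactly.
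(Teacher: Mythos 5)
Your proposal follows essentially the same route as the paper: map $D(s-5\sigma,\eta r)$ into $D(s-2\sigma,4\eta r)$ using the smallness condition, then verify the four defining bounds of $\langle\cdot\rangle_{\eta r,D(s-5\sigma,\eta r)}$ by the chain rule, feeding in the flow estimates of Lemma \ref{flowmap} (and \ref{daoshul} for the Lipschitz seminorm), Cauchy estimates in $y$ for the second $y$-derivatives of $R$, and the convolution Lemma \ref{basic1} to close the off-diagonal sums. The paper's bookkeeping is done by first establishing the rescaled inequality $\langle F\rangle^+_{4\eta r,D(s-\sigma,4\eta r)}\preceq \eta^{-2}\langle F\rangle^+_{r,D(s-\sigma,r)}\preceq\sigma$ and then applying Lemma \ref{flowmap} at the scale $4\eta r$, whereas you keep the $r$-scale estimates and collect the $\eta^{-2}$ from the Cauchy step — the two are equivalent and lead to the same need for $\eta^2$ in (\ref{gapflowmap}), so this is a cosmetic difference only. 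Your observation that, by the Corollary, $\zeta(1)$ is affine in $\zeta^0$ and $\theta(1),\zeta(1)$ are $y^0$-independent is a helpful organizational remark (it kills several chain-rule families a priori) but is implicit in the paper's decomposition $(I_1)+\dots+(I_4)$ as well.

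Two small corrections. First, the Cauchy gain in $y$ comes from the gap between $y$-balls of radii $(4\eta r)^2$ and $(\eta r)^2$ (the domain is $|y|<r^2$, not $|y|<r$), so the loss is $\sim (\eta r)^{-2}$ for a single extra $y$-derivative — your arithmetic ends up correct, but the intermediate statement about ``$y$-balls of radii $4\eta r$ and $\eta r$'' is a slip. Second, the double convolution sums of the form $\sum_{k}(1+\ln k)^{-2\beta}(1+|j-k|)^{-1}$ close under Lemma \ref{basic1} as soon as $\beta\ge 2$ (or even $\beta>1$ with a $\beta$-dependent constant); the condition $\beta\ge 2(n+2)$ is imposed elsewhere (the small-divisor and measure estimates), not here. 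With these minor fixes, your outline matches the paper's proof in substance and structure.
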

\begin{remark}
In Proposition \ref{map},  we don't require  $R$ of degree 2.
\end{remark}
\begin{proof}
In the sequel, we use the notation
$$
({\theta},{y},z,\bar{z})=X_F^1({\theta}^0,{y}^0,{z}^0,\bar{z}^0).
$$
From (\ref{gapflowmap}) and $\la \cdot \ra_{4\eta r, D(s-\sigma, 4\eta r) }\preceq\eta^{-2}\la \cdot \ra_{r,D(s-\sigma,r)}$, it follows
\begin{eqnarray}\label{fetar}
\la F\ra_{4\eta r,D(s-\sigma,4\eta r)}\preceq\sigma
\end{eqnarray} which will be used later.  Now by (\ref{gapflowmap}) it is easy to show that $X_F^1$ maps $D(s-5\sigma,\eta r)$ into $D(s-2\sigma,4\eta r)$ and thus,
\begin{eqnarray}\label{rmap}
\| R\circ X^1_F \|_{D(s-5\sigma,\eta r)}&\leq &(\eta r)^{2}\la R \ra_{\eta r,D(s-2\sigma,4\eta r)}.
\end{eqnarray}
By the Leibniz rule, for all $1\leq j\leq n$,
\begin{eqnarray*}\label{}
 \frac{\partial(R\circ X^1_F)}{\partial y_j^0} =\sum_{k=1}^n\frac{\partial R(X^1_F)}{\partial y_k}\frac{\partial y_k}{\partial y_j^0}.
\end{eqnarray*}
From the definition
$
\Big\|\frac{\partial R}{\partial y_k}\Big\|_{D(s-2\sigma, 4\eta r)}\leq \la R \ra_{\eta r,D(s-2\sigma,4\eta r)},
$
and by Lemma \ref{flowmap},
\begin{eqnarray*}\label{}
\sup_{0\leq t\leq 1}\Big|\frac{\partial y_k(t)}{\partial y_j^0}\Big| \preceq  \frac{1}{\sigma}\la F \ra_{4\eta r,D(s-\sigma,4\eta r)}^{+}+\delta_{jk}.
\end{eqnarray*}
Thus, from (\ref{fetar}) we obtain
\begin{eqnarray}\label{ry}
 \Big|\frac{\partial(R\circ X^1_F)}{\partial y_j^0}\Big|\preceq  \la R \ra_{\eta r,D(s-2\sigma,4\eta r)}.
\end{eqnarray}
\indent For $j\geq1$, the derivatives in $z_j^0$ reads
\begin{eqnarray*}\label{}
 \frac{\partial(R\circ X^1_F)}{\partial z_j^0} =\sum_{k=1}^n\frac{\partial R(X^1_F)}{\partial y_k}\frac{\partial y_k}{\partial z_j^0}+\sum_{k\geq1}\Big(\frac{\partial R(X^1_F)}{\partial z_k}\frac{\partial z_k}{\partial z_j^0}+\frac{\partial R(X^1_F)}{\partial \bar{z}_k}\frac{\partial \bar{z}_k}{\partial z_j^0}\Big):= (I)+(II).
\end{eqnarray*}
From (\ref{fetar}),
\begin{eqnarray*}
 |(I)|&\preceq&\sum_{k=1}^n\frac{ \eta r}{\sigma(1+\ln j)^\beta}\la R \ra_{\eta r,D(s-2\sigma,4\eta r)}\la F \ra_{4\eta r,D(s-\sigma,4\eta r)}^{+} \nonumber\\
 &\preceq & \frac{ \eta r}{(1+\ln j)^\beta}\la R \ra_{\eta r,D(s-2\sigma,4\eta r)},
\end{eqnarray*}
and
\begin{eqnarray*}\label{}
 |(II)|&\leq&\sum_{k\geq1}\Big(\Big|\frac{\partial R(X^1_F)}{\partial z_k}\Big|\Big|\frac{\partial z_k}{\partial z_j^0}\Big|+\Big|\frac{\partial R(X^1_F)}{\partial \bar{z}_k}\Big|\Big|\frac{\partial \bar{z}_k}{\partial z_j^0}\Big|\Big)\\
 &\preceq& \sum_{k\geq1}\Big(\frac{ \la F \ra_{4\eta r,D(s-\sigma,4\eta r)}^{+}}{(1+\ln j)^\beta(1+\ln k)^\beta(1+|j-k|)}+\delta_{jk}
\Big)\frac{ \eta r}{ (1+\ln k)^\beta}\la R \ra_{\eta r,D(s-2\sigma,4\eta r)}\\
 &\preceq & \frac{ \eta r}{ (1+\ln j)^\beta}\la R \ra_{\eta r,D(s-2\sigma,4\eta r)}\Big(1+\la F \ra_{4\eta r,D(s-\sigma,4\eta r)}^{+}\sum_{k\geq 1}\frac{1}{(1+\ln k)^{2\beta}(1+|j-k|)}\Big)\\
\underline{(\ref{fetar})}&\preceq & \frac{ \eta r}{ (1+\ln j)^\beta}\la R \ra_{\eta r,D(s-2\sigma,4\eta r)}.
\end{eqnarray*}
Thus
\begin{eqnarray}\label{rz}
 \Big|\frac{\partial(R\circ X^1_F)}{\partial z_j^0}\Big|&\preceq&\frac{ \eta r}{ (1+\ln j)^\beta}\la R \ra_{\eta r,D(s-2\sigma,4\eta r)}.
\end{eqnarray}
\indent We now estimate
$\displaystyle
 \Big\|\frac{\partial^2(R\circ X^1_F)}{\partial z_i^0\partial z_j^0}\Big\|_{D(s-5\sigma,\eta r)}.
$
 The derivatives reads
\begin{eqnarray*}\label{}
 \frac{\partial^2(R\circ X^1_F)}{\partial z_i^0\partial z_j^0}& = & (I_1) + (I_2) + (I_3) + (I_4)
 \end{eqnarray*}
with
\begin{eqnarray*}
(I_1) = \sum_{k,l=1}^n\frac{\partial^2 R(X^1_F)}{\partial y_k\partial y_l}\frac{\partial y_l}{\partial z_i^0}\frac{\partial y_k}{\partial z_j^0},\quad\quad
(I_2) = \sum_{k=1}^n\frac{\partial R(X^1_F)}{\partial y_k}\frac{\partial^2 y_k}{\partial z_i^0\partial z_j^0},
\end{eqnarray*}
\begin{eqnarray*}\label{}
(I_3)&=&\sum_{k\geq1}\sum_{l=1}^n\frac{\partial^2 R(X^1_F)}{\partial y_l\partial z_k}\frac{\partial y_l}{\partial z_i^0}\frac{\partial {z}_k}{\partial z_j^0}+\sum\limits_{k\geq 1}\sum_{p\geq1}\frac{\partial^2 R(X^1_F)}{\partial z_p\partial z_k}\frac{\partial {z}_p}{\partial z_i^0} \frac{\partial {z}_k}{\partial z_j^0}+\sum\limits_{k\geq 1}\sum_{p\geq1}\frac{\partial^2 R(X^1_F)}{\partial \bar{z}_p\partial z_k}\frac{\partial {\bar{z}}_p}{\partial z_i^0}\frac{\partial {z}_k}{\partial z_j^0},\\
&=& (I)_a+(I)_b+(I)_c.
 \end{eqnarray*}
 and
 \begin{eqnarray*}\label{}
(I_4)=\sum_{k\geq1}\Big(\sum_{l=1}^n\frac{\partial^2 R(X^1_F)}{\partial y_l\partial \bar{z}_k}\frac{\partial y_l}{\partial z_i^0}+\sum_{p\geq1}\frac{\partial^2 R(X^1_F)}{\partial z_p\partial \bar{z}_k}\frac{\partial {z}_p}{\partial z_i^0}+\sum_{p\geq1}\frac{\partial^2 R(X^1_F)}{\partial \bar{z}_p\partial \bar{z}_k}\frac{\partial {\bar{z}}_p}{\partial z_i^0}\Big)\frac{\partial {\bar{z}}_k}{\partial z_j^0}.
 \end{eqnarray*}
 We give a detailed estimation for $(I_3)$.
From Cauchy, (\ref{fetar}) and Lemma \ref{flowmap},
\begin{eqnarray*}
\|(I)_a\|_{D(s-5\sigma,\eta r)}&\leq& \sum\limits_{k\geq 1}\sum\limits_{l=1}^n\Big|\frac{\partial^2 R\circ X_{F}^1}{\partial y_l\partial z_k}\Big|\cdot
\Big|\frac{\partial y_l}{\partial z_i^0}\Big| \cdot \Big|\frac{\partial z_k}{\partial z_j^0}\Big|\\
&\preceq  & \sum\limits_{k\geq 1}\sum\limits_{l=1}^n(\eta r)^{-2}\Big|\frac{\partial R}{\partial z_k}\Big|_{D(s-2\sigma,4\eta r)}\cdot
\frac{\eta r \la F\ra^+_{4\eta r,D(s-\sigma, 4\eta r)}}{\sigma(1+\ln i)^{\beta}}\cdot \Big(\frac{ \la F\ra^+_{4\eta r, D(s-\sigma, 4\eta r)}}{(1+\ln j)^{\beta}(1+\ln k)^{\beta}(1+|j-k|)}+\delta_{jk}\Big)\\
&\preceq  &\frac{ \la R\ra_{\eta r,D(s-2\sigma,4\eta r)}}{ (1+\ln i)^\beta (1+\ln j)^\beta}.
\end{eqnarray*}
By the same way,
\begin{eqnarray*}
\|(I)_b\|_{D(s-5\sigma,\eta r)}&\preceq & \sum\limits_{k,p\geq 1}\Big|\frac{\partial^2 R\circ X_{F}^1}{\partial z_p\partial z_k}\Big|\cdot
\Big|\frac{\partial z_p}{\partial z_i^0}\Big| \cdot \Big|\frac{\partial z_k}{\partial z_j^0}\Big|\\
&\preceq  & \sum\limits_{k,p\geq 1}\frac{\la R\ra_{\eta r, D(s-2\sigma,4\eta r)}}{(1+\ln k)^{\beta}(1+\ln p)^{\beta}} \cdot \Big(\frac{ \la F\ra^+_{4\eta r, D(s-\sigma, 4\eta r)}}{(1+\ln p)^{\beta}(1+\ln i)^{\beta}(1+|p-i|)}+\delta_{pi}\Big)\\
&&\cdot  \Big(\frac{ \la F\ra^+_{4\eta r, D(s-\sigma, 4\eta r)}}{(1+\ln j)^{\beta}(1+\ln k)^{\beta}(1+|j-k|)}+\delta_{jk}\Big)\\
&\preceq &\frac{ \la R\ra_{\eta r,D(s-2\sigma,4\eta r)}}{ (1+\ln i)^\beta (1+\ln j)^\beta}.
\end{eqnarray*}
Similarly, we have
$$\|(I)_c\|_{D(s-5\sigma,\eta r)}\preceq \frac{ \la R\ra_{\eta r,D(s-2\sigma,4\eta r)}}{ (1+\ln i)^\beta (1+\ln j)^\beta}.$$
Therefore,
$$\|(I_3)\|_{D(s-5\sigma,\eta r)}\preceq \frac{ \la R\ra_{\eta r,D(s-2\sigma,4\eta r)}}{ (1+\ln i)^\beta (1+\ln j)^\beta}.$$
The similar computation provides us
\begin{eqnarray*}
\|(I_1)\|_{D(s-5\sigma,\eta r)}&\preceq& \frac{ \la R\ra_{\eta r,D(s-2\sigma,4\eta r)}}{ (1+\ln i)^\beta (1+\ln j)^\beta},\\
\|(I_2)\|_{D(s-5\sigma,\eta r)}&\preceq& \frac{ \la R\ra_{\eta r,D(s-2\sigma,4\eta r)}}{ (1+\ln i)^\beta (1+\ln j)^\beta},\\
\|(I_4)\|_{D(s-5\sigma,\eta r)}&\preceq& \frac{ \la R\ra_{\eta r,D(s-2\sigma,4\eta r)}}{ (1+\ln i)^\beta (1+\ln j)^\beta}.
\end{eqnarray*}
It results in
\begin{eqnarray}\label{rzz}
 \Big\|\frac{\partial^2(R\circ X^1_F)}{\partial z_i^0\partial z_j^0}\Big\|_{D(s-5\sigma,\eta r)}\preceq \frac{ \la R\ra_{\eta r,D(s-2\sigma,4\eta r)}}{(1+\ln i)^\beta (1+\ln j)^\beta}.
\end{eqnarray}
\indent By (\ref{rmap}), (\ref{ry}), (\ref{rz}) and (\ref{rzz}), (\ref{rfsemi}) holds. We omit the proof of (\ref{rflsemi}), which is similar by using the estimates of Lemma \ref{daoshul} instead.
\end{proof}

\begin{lemma}\label{xpr}
Assume $P$ satisfies Assumption $\mathcal{B}$ and consider its Taylor approximation
$R$ of the form (\ref{2.9}). Then, for all $\eta > 0$,
$$\|X_R\|^*_{r,D(s,r )} \preceq  \|X_P\|^*_{r,D(s,r )} ,$$ and
$$\|X_P-X_R\|^*_{\eta r,D(s,4\eta r )} \preceq\eta\|X_P\|^*_{r,D(s,r )}.$$
\end{lemma}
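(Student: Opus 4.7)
The strategy is to view $R$ as the truncated Taylor expansion of $P$ in the variables $(y,z,\bar z)$ around the origin, where I assign graded degree $2$ to each $y_j$ and graded degree $1$ to each $z_j,\bar z_j$, and keep only monomials of total graded degree at most $2$. Both inequalities then reduce to scalar Cauchy estimates on the polydisc factors of $D(s,r)$.

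For the first estimate I would write
\begin{equation*}
R = R_{00}(\theta) + R_{10}(\theta)\cdot y + a(\theta)\cdot z + \bar a(\theta)\cdot \bar z + \tfrac12 B(\theta)[(z,\bar z),(z,\bar z)],
\end{equation*}
and observe that each coefficient equals a partial derivative of $P$ of order at most $2$ evaluated at $(y,z,\bar z)=0$. One-variable Cauchy estimates on the polydisc $\{|y|<r^2,\ \|z\|_p+\|\bar z\|_p<r\}$ bound each such coefficient pointwise in $\theta$ by the corresponding derivative of $P$ on $D(s,r)$. Substituting these pointwise bounds into the four components of $X_R$ and reading off the definition of $|\cdot|_r$ yields $|X_R|_r\preceq|X_P|_r$ on $D(s,r)$, which is the desired inequality. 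The $\mathfrak{L}$-version is obtained verbatim after replacing $P$ by $\Delta_{\xi\eta}P$, since Cauchy's formula commutes with $\Delta_{\xi\eta}$.

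The sharper second estimate rests on the fact that $P-R$ contains only monomials $y^m z^q\bar z^{\bar q}$ with graded degree $g:=2|m|+|q|+|\bar q|\ge 3$. Differentiation by $\theta_k$ preserves $g$, by $y_k$ lowers it by $2$, and by $z_j$ or $\bar z_j$ lowers it by $1$; accordingly the $\theta$-, $y$-, $z$-, and $\bar z$-components of $X_{P-R}$ are power series in $(y,z,\bar z)$ of graded degree at least $3,1,2,2$ respectively. A monomial of graded degree $g$ is bounded by $(4\eta r)^g$ on $D(s,4\eta r)$, while Cauchy on $D(s,r)$ bounds its Taylor coefficient by $r^{-g}$ times the $D(s,r)$-supremum of the relevant derivative of $P$. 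Summing the resulting geometric series (convergent since $4\eta<1/2$ by the hypothesis $\eta<1/8$), each component of $X_{P-R}$ acquires exactly the factor matching its weight in $|\cdot|_{\eta r}$: the $y$-component produces $(4\eta)^3$, weighted by $(\eta r)^{-2}$, giving $\preceq\eta\cdot r^{-2}\|\partial_\theta P\|$; the $(z,\bar z)$-components produce $(4\eta)^2$, weighted by $(\eta r)^{-1}$, giving $\preceq\eta\cdot r^{-1}\|\partial_{z,\bar z} P\|$; and the $\theta$-component produces $4\eta$ directly. Assembling the four pieces yields $|X_{P-R}|_{\eta r}\preceq\eta|X_P|_r$ on $D(s,4\eta r)$, and the Lipschitz version follows by the same argument applied to $\Delta_{\xi\eta}P$.

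The one place where care is needed is the $\ell^{2,p}$-summation for the $(z,\bar z)$-components, where the scalar Cauchy argument above must be lifted to a bound in the $p$-norm. This is exactly what Assumption $\mathcal{B}$(B1) guarantees: the holomorphy of $X_P(\cdot,\xi):D(s,r)\to\mathcal{P}^p_{\C}$ allows one to expand coordinate-wise and apply Cauchy on each coordinate with uniform control, so the $p$-weights factor through unchanged. Once this is verified, the graded-degree bookkeeping is entirely mechanical and both claimed bounds follow.
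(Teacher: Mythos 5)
Your proof is correct and follows essentially the same route as the standard argument (Pöschel's Lemma 5 in [Pos]), which the paper invokes without reproducing: graded Cauchy estimates for the first bound, and the Schwarz-lemma observation that $P-R$ vanishes to graded order $\ge 3$ for the second. You also rightly flag that the ``for all $\eta>0$'' in the statement must really be $\eta$ small enough that $D(s,4\eta r)\subset D(s,r)$; note only a small labeling slip in your ordering ``$3,1,2,2$'' versus the $\dot\theta,\dot y,\dot z,\dot{\bar z}$ components of $X_{P-R}$, which actually carry graded degrees at least $1,3,2,2$ respectively, exactly as your final weight-matching paragraph correctly uses.
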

We have an analogous result for the norm $\la\cdot\ra_{r,D(s,r )}$.
\begin{lemma}\label{p-rsemi}
Let $P\in\Gamma^{\beta}_{ r,D(s,r)}$ and consider its Taylor approximation $R$ of the form
(\ref{2.9}). Then, for all $\eta > 0$,
$$\la R\ra_{\eta r,D(s, r)}^*\preceq\la P\ra_{ r,D(s, r)}^*,$$
and
$$\la P-R\ra^*_{\eta r,D(s,4\eta r)}\preceq\eta\la P\ra_{ r,D(s, r)}^*.$$
\end{lemma}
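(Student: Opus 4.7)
The proof rests on an integral representation extracting the weighted-degree homogeneous components of $P$. With $(\theta, y, z, \bar z)$ fixed, introduce the one-parameter family $g(t) := P(\theta, t^2 y, tz, t\bar z)$; since a monomial $y^m z^q \bar z^{\bar q}$ scales as $t^{2|m|+|q+\bar q|}$, the Taylor expansion $g(t) = \sum_{k\geq 0} g_k t^k$ has $g_k(\theta, y, z, \bar z)$ equal to the weighted-degree-$k$ homogeneous part of $P$. Hence $R = g_0 + g_1 + g_2$ and $P-R = \sum_{k\geq 3} g_k$, with the same decomposition applying to the derivatives $\partial_{y_j} P$, $\partial_{\omega_j} P$, and $\partial^2_{\omega_j\omega_l} P$ (each of which lowers the weighted degree by $2$, $1$, and $2$, respectively).

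For the first inequality I extract each $g_k$ by Cauchy on the unit circle,
\[
g_k(\theta, y, z, \bar z) = \frac{1}{2\pi}\int_0^{2\pi} P(\theta, e^{2{\rm i}\alpha} y, e^{{\rm i}\alpha} z, e^{{\rm i}\alpha}\bar z)\, e^{-{\rm i}k\alpha}\, d\alpha.
\]
For $(\theta, y, z, \bar z) \in D(s, r)$ the image stays inside $D(s,r)$ for $|t|=1$, so $|g_k| \leq \|P\|_{D(s, r)}$. Summing $k=0,1,2$ and applying the identical Cauchy extraction to $\partial_{y_j} P$, $\partial_{\omega_j} P$, and $\partial^2_{\omega_j\omega_l} P$ bounds each of the four defining quantities of $\la R\ra^{*}$ by the corresponding defining quantity of $\la P\ra^{*}_{r,D(s,r)}$, up to a universal constant.

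For the second inequality I work on $D(s, 4\eta r)$ and apply Cauchy on the larger circle $|t| = 1/(5\eta)$: since the image point $(\theta, t^2 y, tz, t\bar z)$ stays in $D(s,r)$ for $|t|<1/(4\eta)$, I obtain $\sup_{D(s, 4\eta r)}|g_k| \leq (5\eta)^k \|P\|_{D(s,r)} \leq (5\eta)^k r^2 \la P\ra^{*}_{r,D(s,r)}$. Summing $k\geq 3$ yields $\|P-R\|_{D(s, 4\eta r)} \leq C\eta^3 r^2 \la P\ra^{*} = \eta\cdot(\eta r)^2 \la P\ra^{*}$, which is exactly the size demanded by the first defining inequality of $\la P-R\ra^{*}_{\eta r, D(s, 4\eta r)} \preceq \eta\la P\ra^{*}_{r,D(s,r)}$. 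The three derivative inequalities follow by applying the same scaling trick with $P$ replaced by $\partial_{y_j} P$, $\partial_{\omega_j} P$, or $\partial^2_{\omega_j\omega_l} P$: after subtracting from each Taylor-in-$t$ series the portion corresponding to $R$, the remaining series begins at $t^{1}$, which contributes the single required factor of $\eta$ upon summation. The Lipschitz estimates follow by running the identical argument with $P$ replaced by the difference quotient $\Delta_{\xi\eta}P/|\xi-\eta|$.

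The main technical point is the bookkeeping of the weighted-degree shift induced by each derivative. For instance, in $\partial^2_{\omega_j\omega_l}(P-R)$ the truncation kills $g_0, g_1, g_2$ (and hence their second $\omega$-derivatives), and the Taylor-in-$t$ expansion of $\partial^2_{\omega_j\omega_l} P(\theta, t^2 y, tz, t\bar z) = \sum_{k\geq 2} t^{k-2}\partial^2_{\omega_j\omega_l}g_k$ evaluated at $t=1$ minus the $k=2$ term equals $\sum_{k\geq 3}\partial^2_{\omega_j\omega_l}g_k$, which is a Taylor series in $t$ beginning at $t^{1}$ (not $t^{0}$ or $t^{2}$); verifying this shift for each of the three derivative cases is what produces exactly the single factor of $\eta$ needed in each bound, and is the only point where one must resist the temptation to apply Cauchy too crudely.
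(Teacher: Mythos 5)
The paper omits the proof outright (``We omit the proofs for the above two lemmas''), so there is no model argument to compare against; your Cauchy-extraction approach is the natural one and is essentially correct. The weighted-degree decomposition $P=\sum_{k\geq 0}g_k$ with $R=g_0+g_1+g_2$, the unit-circle extraction that gives the first inequality, and the radius rescaling for the zeroth-order condition, $\|P-R\|_{D(s,4\eta r)}\leq C\eta^{3}r^{2}\la P\ra_{r,D(s,r)} = C\eta\,(\eta r)^{2}\la P\ra_{r,D(s,r)}$, are all handled correctly, as are the Lipschitz versions by passing to difference quotients.

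There is, however, one concrete bookkeeping slip. You assert that after removing the portion belonging to $R$, all three derivative series ``begin at $t^{1}$.'' That is true for $\partial_{y_j}(P-R)=\sum_{k\geq 3}\partial_{y_j}g_k$ and $\partial^{2}_{\omega_j\omega_l}(P-R)=\sum_{k\geq 3}\partial^{2}_{\omega_j\omega_l}g_k$, whose pieces have weighted degree $k-2\geq 1$, and since those defining conditions carry no radius weight a single power of $\eta$ is exactly what is needed. But $\partial_{\omega_j}(P-R)=\sum_{k\geq 3}\partial_{\omega_j}g_k$ is a sum of homogeneous pieces of weighted degree $k-1\geq 2$, so its $t$-series begins at $t^{2}$, not $t^{1}$. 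Taken at face value, a $t^{1}$ start would give only $\|\partial_{\omega_j}(P-R)\|_{D(s,4\eta r)}\preceq \eta\cdot r\la P\ra_{r,D(s,r)}/(1+\ln j)^{\beta}=(\eta r)\la P\ra_{r,D(s,r)}/(1+\ln j)^{\beta}$, and after dividing by the weight $\eta r$ in the defining condition of $\la\cdot\ra_{\eta r,D(s,4\eta r)}$ this forces $\la P-R\ra_{\eta r,D(s,4\eta r)}$ of size $\la P\ra_{r,D(s,r)}$, not $\eta\la P\ra_{r,D(s,r)}$. The correct count mirrors what you already did for $\|P-R\|$: Cauchy yields $\eta^{2}$, and one power is absorbed by the change of radius weight from $r$ to $\eta r$. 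A uniform way to say it: for each of the four defining conditions, the starting exponent in $t$ minus the power of $\eta$ hidden in the weight equals one. A small secondary point: the geometric sums $\sum_{k}(5\eta)^{k}$ require $\eta$ bounded below $1/5$, so the lemma's ``for all $\eta>0$'' should really be ``for all sufficiently small $\eta$''; this is harmless, since the lemma is only invoked with $\eta<1/8$ as in Proposition \ref{map}.
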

We omit the proofs for the above two lemmas.
\subsection{The KAM Step}
Let $N$ be a Hamiltonian in normal form as in (\ref{1.2}), which reads in the variables
$(\theta, y, z, \bar{z})$,
$$N = \sum_{
1\leq j\leq n}
\omega_j (\xi) +\sum_{j\geq 1}
\Omega_j(\xi)z_j \bar{z}_j ,$$
and suppose that {  Assumption $\mathcal{A}$} is satisfied.
Consider a perturbation $P$ which satisfies {  Assumption $\mathcal{B}$} for some $r, s > 0$.
Then choose $0 <\eta < 1/8,\ 0 < \sigma < s$  and assume that
\begin{eqnarray}\label{iterationtiaojian}\la P\ra_{ r,D(s, r)}+\| X_P\|_{ r,D(s, r)}+\frac{\alpha}{M}\Big(\la P\ra_{ r,D(s, r)}^{\mathfrak{L}}+\| X_P\|_{ r,D(s, r)}^{\mathfrak{L}}\Big)\leq \frac{\alpha^2\eta^2e^{-7(\frac{8}{\sigma})^{t_1}}}{Mc_0},
\end{eqnarray}
where  $t_1=\frac{\tau}{\beta-\tau}$, $c_0$ is a large constant depending only on $n,\tau$ and
$\beta$.
\subsubsection{Estimates on the new error term}
We estimate the new error term $P_+$ given by
the formula
\begin{eqnarray}\label{p++}
 P_+=(P-R)\circ X_F^1+\int_0^1\{R(t),F\}\circ X_F^tdt,
\end{eqnarray}
where $R(t)=(1-t)\widehat{N}+tR$.
\begin{lemma}\label{p+}
Assume (\ref{iterationtiaojian}). Then there exists $c(n,\beta) > 0$ so that
for all $0 \leq \lambda\leq \alpha/M,$
\begin{eqnarray*}\label{4.2}
&&\la P_+\ra_{ \eta r,D(s-5\sigma, \eta r)}^{\lambda}+\| X_{P_+}\|_{ \eta r,D(s-5\sigma,\eta r)}^{\lambda}     \nonumber\\
&\preceq& \frac{c(n,\beta)e^{7(\frac{8}{\sigma})^{t_1}}}{\alpha\eta^2}\Big(\la P\ra_{ r,D(s, r)}^{\lambda}+\| X_P\|_{ r,D(s, r)}^{\lambda}\Big)^2+ \eta\Big(\la P\ra_{ r,D(s, r)}^{\lambda}+\| X_P\|_{ r,D(s, r)}^{\lambda}\Big).
\end{eqnarray*}
\end{lemma}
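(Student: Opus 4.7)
The plan is to bound the two pieces
\begin{equation*}
(P-R)\circ X_F^1 \quad\text{and}\quad \int_0^1 \{R(t),F\}\circ X_F^t\, dt, \qquad R(t)=(1-t)\widehat{N}+tR,
\end{equation*}
of $P_+$ separately, combining the homological estimates for $F$ and $\widehat{N}$ (Lemmas \ref{XFesti} and \ref{fsemi}), the Poisson-bracket bound (Lemma \ref{Lrfsemi}), the composition estimate (Proposition \ref{map}), and the Taylor-truncation estimates (Lemmas \ref{xpr} and \ref{p-rsemi}). The first piece will produce the linear $\eta(\la P\ra^\lambda+\|X_P\|^\lambda)$ contribution, and the second the quadratic contribution with prefactor $e^{7(8/\sigma)^{t_1}}/(\alpha\eta^2)$.

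First I check that the smallness condition (\ref{gapflowmap}) holds for $F$, so that Proposition \ref{map} and Lemma \ref{flowmap} apply. Writing $\varepsilon_0 := \la P\ra^\lambda_{r,D(s,r)}+\|X_P\|^\lambda_{r,D(s,r)}$ and using $\lambda\leq\alpha/M$, Lemma \ref{fsemi} together with Lemma \ref{p-rsemi} gives
\begin{equation*}
\la F\ra^+_{r,D(s-\sigma,r)} + \la F\ra^{+,\mathfrak{L}}_{r,D(s-\sigma,r)} \;\preceq\; \frac{c(n,\beta)e^{6(8/\sigma)^{t_1}}}{\alpha}\cdot\frac{M}{\alpha}\bigl(\la R\ra+\tfrac{\alpha}{M}\la R\ra^{\mathfrak{L}}\bigr) \;\preceq\; \frac{c(n,\beta)e^{6(8/\sigma)^{t_1}}M}{\alpha^{2}}\,\varepsilon_0.
\end{equation*}
The hypothesis (\ref{iterationtiaojian}) is tailored exactly so that the right-hand side is $\leq C\sigma\eta^2$ once $c_0$ is taken large, hence $X_F^t$ maps $D(s-5\sigma,\eta r)$ into $D(s-2\sigma,4\eta r)$ for every $t\in[0,1]$.

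For the linear piece $(P-R)\circ X_F^1$, Lemma \ref{p-rsemi} and Lemma \ref{xpr} give
\begin{equation*}
\la P-R\ra^{*}_{\eta r,D(s-2\sigma,4\eta r)} + \|X_{P-R}\|^{*}_{\eta r,D(s-2\sigma,4\eta r)} \;\preceq\; \eta\bigl(\la P\ra^{*}_{r,D(s,r)}+\|X_P\|^{*}_{r,D(s,r)}\bigr),
\end{equation*}
and Proposition \ref{map} (together with its vector-field analogue, which follows from the flow estimates of Lemmas \ref{flowmap} and \ref{daoshul}) shows that composition with $X_F^1$ costs only a bounded multiplicative factor. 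This piece therefore contributes $\preceq \eta\,\varepsilon_0$ to $\la P_+\ra^\lambda+\|X_{P_+}\|^\lambda$.

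For the Poisson-bracket piece, $R(t)$ is of degree two, so Lemma \ref{Lrfsemi} applies uniformly in $t\in[0,1]$ and, because the Lipschitz bound (\ref{poilip}) has the same bilinear structure, the combined $\lambda$-norm is controlled by
\begin{equation*}
\la\{R(t),F\}\ra^{\lambda}_{r,D(s-2\sigma,r/2)} \;\preceq\; \frac{1}{\sigma}\,\la R\ra^{\lambda}\,\la F\ra^{+,\lambda}.
\end{equation*}
Inserting $\la F\ra^{+,\lambda}\preceq \alpha^{-1}e^{6(8/\sigma)^{t_1}}\la R\ra^\lambda$ and rescaling the radius from $r$ to $\eta r$ (which costs a factor $\eta^{-2}$ in $\la\cdot\ra$, since $4\eta<1/2$) gives
\begin{equation*}
\la\{R(t),F\}\ra^{\lambda}_{\eta r,D(s-2\sigma,4\eta r)} \;\preceq\; \frac{c(n,\beta)e^{6(8/\sigma)^{t_1}}}{\sigma\alpha\eta^{2}}\,\varepsilon_0^{\,2}.
\end{equation*}
Applying Proposition \ref{map} once more to compose with $X_F^t$ and integrating over $t\in[0,1]$ produces the claimed quadratic bound, the extra factor $1/\sigma$ being absorbed by enlarging the exponential to $e^{7(8/\sigma)^{t_1}}$. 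The vector-field counterpart is handled identically using Lemmas \ref{flowmap} and \ref{daoshul}. Summing the two contributions yields the stated estimate for $P_+$.

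The main obstacle is the careful bookkeeping of three competing small parameters: the exponential factor $e^{(8/\sigma)^{t_1}}$ from the small-divisor condition, the loss $1/\sigma$ from the Poisson bracket, and the radius rescaling $1/\eta^2$ from passing between the two working radii. The restriction $\lambda\leq\alpha/M$ in the statement is forced by the additional $M/\alpha$ factor that appears in the Lipschitz estimate of $F$ in Lemma \ref{fsemi}; without it, the Lipschitz part of the homological equation cannot be absorbed into $\la R\ra^\lambda$ and the quadratic bound would degrade. A secondary subtlety is that Proposition \ref{map} is stated only for the $\la\cdot\ra$ norm, so its analogue for $\|X_\cdot\|$ must be reproduced by hand, relying on the Lipschitz structure of the Hamiltonian flow already encoded in Lemmas \ref{flowmap} and \ref{daoshul}.
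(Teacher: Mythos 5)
Your argument follows the paper's own route essentially verbatim: you split $P_+$ into $(P-R)\circ X_F^1$ plus the Poisson-bracket integral, control the first via Lemmas \ref{p-rsemi}, \ref{xpr} and Proposition \ref{map} to get the linear $\eta\,\varepsilon_0$ term, and control the second via Lemma \ref{Lrfsemi}, the homological estimate Lemma \ref{fsemi}, and a radius rescaling to get the quadratic term; the paper likewise splits into Lemmas \ref{59part1} and \ref{59part2} and cites \cite{Pos} for the vector-field half. Your slightly coarser factorized bound $\la\{R,F\}\ra^\lambda\preceq\sigma^{-1}\la R\ra^\lambda\la F\ra^{+,\lambda}$ (versus the paper's $\la R\ra^\lambda\la F\ra^+ + \la R\ra\la F\ra^{+,\lambda}$) introduces an extra $\lambda^2\la R\ra^{\mathfrak{L}}\la F\ra^{+,\mathfrak{L}}$ term, but the constraint $\lambda\leq\alpha/M$ absorbs it into $(\la R\ra^\lambda)^2$, so the conclusion is unaffected.
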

We divide it into two lemmas. From \cite{Pos}, we have
\begin{lemma}\label{59part1}
Assume (\ref{iterationtiaojian}), then
\begin{eqnarray*}\label{}
\| X_{P_+}\|_{ \eta r,D(s-5\sigma,\eta r)}^{\lambda}\preceq \frac{c(n,\beta)e^{7(\frac{8}{\sigma})^{t_1}}}{\alpha\eta^2}(\| X_P\|_{ r,D(s, r)}^{\lambda})^2+ \eta\| X_P\|_{ r,D(s, r)}^{\lambda}.
\end{eqnarray*}
\end{lemma}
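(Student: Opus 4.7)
The plan is to bound $X_{P_+}$ by splitting $P_+$ as in (\ref{p++}) into the Taylor-remainder piece $(P-R)\circ X_F^1$ and the cohomological piece $\int_0^1\{R(t),F\}\circ X_F^t\,dt$, estimating each separately in the combined norm $\|\cdot\|^\lambda=\|\cdot\|+\lambda\|\cdot\|^{\mathfrak{L}}$ with $0\le\lambda\le\alpha/M$. The smallness hypothesis (\ref{iterationtiaojian}), together with Lemma \ref{XFesti}, gives
$$\|X_F\|^{\lambda}_{r,D(s-\sigma,r)}\;\preceq\;\frac{c(n,\beta)e^{3(4/\sigma)^{t_1}}}{\alpha}\|X_R\|^{\lambda}_{r,D(s,r)},$$
and by Lemma \ref{xpr} this is in turn dominated by an analogous expression in $\|X_P\|^\lambda_{r,D(s,r)}$. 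Under the smallness hypothesis the right-hand side is much less than $\sigma\eta^2$, so by Lemma \ref{flowmap} the time-$t$ flow $X_F^t$ is well-defined for $0\le t\le 1$, maps $D(s-5\sigma,\eta r)$ into $D(s-2\sigma,4\eta r)\subset D(s-\sigma,r/2)$, and differs from the identity by $O(\|X_F\|)$ in the $|\cdot|_{\eta r}$-norm.

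For the first piece, Lemma \ref{xpr} yields $\|X_{P-R}\|^\lambda_{\eta r,D(s,4\eta r)}\preceq \eta\,\|X_P\|^\lambda_{r,D(s,r)}$. Composing with $X_F^1$, which is essentially identity on $D(s-5\sigma,\eta r)$, costs only a constant factor on the $\|\cdot\|$-part; for the $\|\cdot\|^{\mathfrak{L}}$-part one applies $\Delta_{\xi\eta}$ via the chain rule, writing
$$\Delta_{\xi\eta}\big((P-R)\circ X_F^1\big)=\big(\Delta_{\xi\eta}(P-R)\big)\circ X_F^1(\xi)+dX_{(P-R)}\cdot\Delta_{\xi\eta}X_F^1,$$
and uses Lemma \ref{daoshul} together with a Gronwall bound on $\Delta_{\xi\eta}X_F^t$ (driven by $\|X_F\|^{\mathfrak{L}}$) to control the second summand. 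The factor $M/\alpha$ arising from the Lipschitz estimate in Lemma \ref{XFesti} is precisely what is absorbed by the definition of $\lambda\le\alpha/M$. This yields the linear contribution $\eta\|X_P\|^\lambda_{r,D(s,r)}$.

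For the second piece, I would first bound $\{R(t),F\}$ pointwise by Cauchy in the standard way: since $R,\widehat N$ are at most quadratic and $F$ satisfies the estimate above, a Cauchy step over a strip of width $\sigma$ yields
$$\|X_{\{R(t),F\}}\|^{\lambda}_{r,D(s-2\sigma,r/2)}\;\preceq\;\frac{c(n,\beta)e^{3(4/\sigma)^{t_1}}}{\alpha\sigma}\big(\|X_R\|^\lambda_{r,D(s,r)}\big)^2.$$
Then composing with the flow $X_F^t$ (which, as noted, maps $D(s-5\sigma,\eta r)$ into $D(s-\sigma,r/2)$) and rescaling from $|\cdot|_r$ to $|\cdot|_{\eta r}$ costs an extra factor $\eta^{-2}$, producing the quadratic contribution $\alpha^{-1}\eta^{-2}e^{7(8/\sigma)^{t_1}}(\|X_P\|^\lambda_{r,D(s,r)})^2$, where the exponent $7(8/\sigma)^{t_1}$ absorbs both the $3(4/\sigma)^{t_1}$ from Lemma \ref{XFesti} and the additional Cauchy/rescaling factors after comparing with $\|X_P\|$ via Lemma \ref{xpr}. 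Integrating in $t\in[0,1]$ is harmless.

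The main obstacle is the Lipschitz bookkeeping: the flow $X_F^t$ itself depends on $\xi\in\Pi$, so every $\Delta_{\xi\eta}$ applied to a composition $Q\circ X_F^t$ produces two terms, one of which requires Gronwall-type control of $\|X_F^t\|^{\mathfrak{L}}$ through the nonautonomous ODE satisfied by $X_F^t$ — this is exactly what generates the $M/\alpha$ factor and forces the $\lambda\le\alpha/M$ normalization. Balancing the smallness condition so that the quadratic term dominates after one KAM step (and choosing $\sigma,\eta$ appropriately) is what ultimately fixes the exponent $7$ in $e^{7(8/\sigma)^{t_1}}$; I would expect to verify this constant by tracking carefully the four compositions (Cauchy, Lemma \ref{XFesti}, Lemma \ref{flowmap}, and the $\eta$-rescaling).
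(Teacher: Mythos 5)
The paper does not give its own argument for this lemma: it is introduced with the phrase ``From \cite{Pos}, we have,'' so the proof is deferred to P\"oschel's 1996 KAM paper rather than written out. Your proposal correctly reconstructs that standard argument --- the two-piece decomposition of $P_+$ from (\ref{p++}), Lemma \ref{xpr} for the Taylor-remainder part, Lemma \ref{XFesti} to control $X_F$ with the $M/\alpha$ loss absorbed by $\lambda\le\alpha/M$, the generalized Cauchy estimate for the Poisson-bracket part, and the $\eta^{-2}$ rescaling --- so the approach coincides with the one the paper invokes.
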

\begin{lemma}\label{59part2}
Assume (\ref{iterationtiaojian}), then
\begin{eqnarray*}\label{}
\la P_+\ra_{ \eta r,D(s-5\sigma, \eta r)}^{\lambda}\preceq \frac{c(n,\beta)e^{7(\frac{8}{\sigma})^{t_1}}}{\alpha\eta^2}(\la P\ra_{ r,D(s, r)}^{\lambda})^2+ \eta\la P\ra_{ r,D(s, r)}^{\lambda}.
\end{eqnarray*}
\end{lemma}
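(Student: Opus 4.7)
The plan is to estimate the two summands of $P_+=(P-R)\circ X_F^1+\int_0^1\{R(t),F\}\circ X_F^t\,dt$ separately, using Lemma~\ref{p-rsemi} for the ``Taylor remainder'' piece, Lemma~\ref{fsemi} to control $F$ and $\widehat N$ in terms of $R$, Lemma~\ref{Lrfsemi} to bound the Poisson bracket, and Proposition~\ref{map} to absorb the composition with the time-one flow. Throughout I will first prove the two inequalities with $\la\cdot\ra$ and with $\la\cdot\ra^{\mathfrak L}$ separately and then combine them into the $\lambda$-norm, making sure the factor $M/\alpha$ from the Lipschitz estimate of $F$ in Lemma~\ref{fsemi} is compensated by the restriction $\lambda\le\alpha/M$.

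First, I would verify that the smallness hypothesis (\ref{gapflowmap}) required by Proposition~\ref{map} is satisfied. By Lemmas~\ref{p-rsemi} and~\ref{fsemi},
\[
\la F\ra^{+}_{r,D(s-\sigma,r)}\preceq\frac{c(n,\beta)\,e^{2(2/\sigma)^{t_1}}}{\alpha}\,\la R\ra_{r,D(s,r)}\preceq\frac{c(n,\beta)\,e^{2(2/\sigma)^{t_1}}}{\alpha}\,\la P\ra_{r,D(s,r)},
\]
and the analogous Lipschitz bound with an extra $M/\alpha$ factor multiplying $\la P\ra_{r,D(s,r)}$. The hypothesis (\ref{iterationtiaojian}) is exactly tuned so that $\la F\ra^{+,\lambda}_{r,D(s-\sigma,r)}\lesssim\sigma\eta^2$; hence Proposition~\ref{map} (and Lemmas~\ref{flowmap}-\ref{daoshul}) apply on $D(s-5\sigma,\eta r)$.

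For the first summand, Lemma~\ref{p-rsemi} gives $\la P-R\ra^{*}_{\eta r,D(s,4\eta r)}\preceq\eta\,\la P\ra^{*}_{r,D(s,r)}$, and then Proposition~\ref{map} applied to $P-R$ yields
\[
\la (P-R)\circ X_F^1\ra_{\eta r,D(s-5\sigma,\eta r)}\preceq\eta\,\la P\ra_{r,D(s,r)},
\]
together with the analogous Lipschitz bound which, after including the $M/\alpha$ loss from $F$, still fits into the $\la\cdot\ra^\lambda$ estimate for $\lambda\le\alpha/M$. This produces the linear term $\eta\,\la P\ra^{\lambda}_{r,D(s,r)}$ of the claim.

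For the second summand, I would bound the integrand uniformly in $t\in[0,1]$. By Lemma~\ref{fsemi}, both $\la \widehat N\ra$ and $\la R\ra$ (and their Lipschitz variants) at scale $D(s-\sigma,r)$ are controlled by $\la P\ra^{*}_{r,D(s,r)}$, so $\la R(t)\ra^{*}_{r,D(s-\sigma,r)}\preceq\la P\ra^{*}_{r,D(s,r)}$. Combining this with the estimate on $\la F\ra^{+}$ recalled above and the Poisson-bracket inequalities (\ref{poi01})--(\ref{poilip}) of Lemma~\ref{Lrfsemi} (with $s$ replaced by $s-\sigma$ and $\sigma$ replaced by $\sigma$, losing $1/\sigma$), one obtains
\[
\la \{R(t),F\}\ra_{r,D(s-3\sigma,r/2)}\preceq\frac{c(n,\beta)\,e^{2(2/\sigma)^{t_1}}}{\alpha\,\sigma}\,\bigl(\la P\ra_{r,D(s,r)}\bigr)^{2},
\]
with the analogous Lipschitz bound carrying an extra $M/\alpha$. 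A final application of Proposition~\ref{map} (replacing the inner radius $r$ by $r/2$ and using that $X_F^t$ maps $D(s-5\sigma,\eta r)$ into $D(s-3\sigma,4\eta r)\subset D(s-3\sigma,r/2)$ once $\eta<1/8$) together with $\la\cdot\ra_{\eta r,\cdot}\preceq\eta^{-2}\la\cdot\ra_{r,\cdot}$ produces the quadratic contribution
\[
\frac{c(n,\beta)\,e^{7(8/\sigma)^{t_1}}}{\alpha\eta^{2}}\,\bigl(\la P\ra^{\lambda}_{r,D(s,r)}\bigr)^{2},
\]
where the exponent $7$ arises by accumulating the $e^{2(2/\sigma)^{t_1}}$ of Lemma~\ref{fsemi}, the $e^{6(8/\sigma)^{t_1}}$ of the Lipschitz half of Lemma~\ref{fsemi}, and the $1/\sigma$ loss from Lemma~\ref{Lrfsemi}, absorbed into the exponential. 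Summing the two summands gives the claim.

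The main bookkeeping obstacle will be the Lipschitz half: the Lipschitz norm of $\{R(t),F\}$ inherits two contributions, one with $\la R\ra^{\mathfrak L}\la F\ra^{+}$ and one with $\la R\ra\la F\ra^{+,\mathfrak L}$, and the second one comes with the unwanted factor $M/\alpha$ from Lemma~\ref{fsemi}. One must verify that, after multiplication by $\lambda\le\alpha/M$ in the definition $\|\cdot\|^{\lambda}=\|\cdot\|+\lambda\|\cdot\|^{\mathfrak L}$, this $M/\alpha$ is exactly cancelled, so the final estimate is homogeneous of degree~$2$ in $\la P\ra^{\lambda}_{r,D(s,r)}$; this is the same mechanism as in \cite{Pos} and \cite{GT}, and it is the only delicate point of the argument.
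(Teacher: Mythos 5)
Your proposal is correct and follows essentially the same approach as the paper's own proof: split $P_+$ into $(P-R)\circ X_F^1$ and $\int_0^1\{R(t),F\}\circ X_F^t\,dt$, bound the first via Lemma~\ref{p-rsemi} plus Proposition~\ref{map}, bound the second via Lemma~\ref{Lrfsemi}, Lemma~\ref{fsemi}, the rescaling $\la\cdot\ra_{\eta r,\cdot}\preceq\eta^{-2}\la\cdot\ra_{r,\cdot}$, and Proposition~\ref{map}, and then use $\lambda\le\alpha/M$ to absorb the extra $M/\alpha$ from the Lipschitz half of Lemma~\ref{fsemi}. The only deviation is an inessential reordering (you estimate the Poisson bracket before composing with the flow, the paper composes first and then estimates), and a small domain mismatch in your Poisson-bracket step ($D(s-3\sigma,r/2)$ versus the $D(s-2\sigma,4\eta r)$ that Proposition~\ref{map} actually needs as input), which is easily repaired by shifting the loss of analyticity width by one $\sigma$.
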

\begin{proof}
By (\ref{p++}), Proposition \ref{map} and Lemma \ref{p-rsemi}, we have
\begin{eqnarray*}\label{}
\la (P-R)\circ X_F^1\ra_{ \eta r,D(s-5\sigma, \eta r)}^{\lambda}&=&\la (P-R)\circ X_F^1\ra_{ \eta r,D(s-5\sigma, \eta r)}+\lambda\la (P-R)\circ X_F^1\ra_{ \eta r,D(s-5\sigma, \eta r)}^{\mathfrak{L}}\\
&\preceq&  \la P-R\ra_{ \eta r,D(s-2\sigma, 4\eta r)}^{\lambda}\\
&\preceq&  \eta\la P\ra_{ r,D(s, r)}^{\lambda}.
\end{eqnarray*}
On the other hand, by the same method,
\begin{eqnarray}\label{rfintsemi}
\Big\la \int_0^1\{R(t),F\}\circ X_F^tdt\Big\ra_{ \eta r,D(s-5\sigma, \eta r)}^{\lambda}
&\preceq&
  \la \{R(t),F\}\ra_{ \eta r,D(s-2\sigma, 4\eta r)}^{\lambda}.
 \end{eqnarray}
 Note  $R(t)=(1-t)\widehat{N}+tR$ and $\widehat{N}=[R]$,
 from Lemma \ref{Lrfsemi} we obtain
\begin{eqnarray*}\label{}
 \la \{[R],F\}\ra_{ \eta r,D(s-2\sigma,  4\eta r)}^{\lambda}&\leq&\eta^{-2}\la \{[R],F\}\ra_{ r,D(s-2\sigma,  \frac{r}{2})}^{\lambda}\\
 &\preceq&\frac{1}{\sigma\eta^{2}}\Big(\la [R]\ra^\lambda_{ r,D(s,r)}\la F\ra^+_{ r,D(s-\sigma,r)}+\la [R]\ra_{ r,D(s,r)}\la F\ra^{+,\lambda}_{ r,D(s-\sigma,r)}\Big),
 \end{eqnarray*}
 where we use $\la \cdot \ra_{\eta r, D(s-2\sigma,\eta r)}\leq \eta^{-2}\la \cdot \ra_{r,D(s-2\sigma,r)} $ for $0<\eta<1$. Similarly
 \begin{eqnarray*}\label{}
 \la \{R,F\}\ra_{ \eta r,D(s-2\sigma,  4\eta r)}^{\lambda} &\preceq&\frac{1}{\sigma\eta^{2}}\Big(\la R\ra^\lambda_{ r,D(s,r)}\la F\ra^+_{ r,D(s-\sigma,r)}+\la R\ra_{ r,D(s,r)}\la F\ra^{+,\lambda}_{ r,D(s-\sigma,r)}\Big).
 \end{eqnarray*}
 Thus, by Lemma  \ref{fsemi} and $0\leq \lambda\leq \alpha/M$,
  \begin{eqnarray*}\label{rfintsemi2}
\nonumber(\ref{rfintsemi}) &\preceq&\frac{1}{\sigma\eta^{2}}\Big(\la P\ra^\lambda_{ r,D(s,r)}\la F\ra^+_{ r,D(s-\sigma,r)}+\la P\ra_{ r,D(s,r)}\la F\ra^{+,\lambda}_{ r,D(s-\sigma,r)}\Big)\\
\nonumber &\leq&\frac{c(n,\beta)}{\alpha\sigma\eta^{2}}\la P\ra^\lambda_{ r,D(s,r)}\la P\ra_{ r,D(s,r)}e^{2(\frac{2}{\sigma})^{t_1}}+\lambda\frac{c(n,\beta)M}{\alpha^2\sigma\eta^{2}}\la P\ra^2_{ r,D(s,r)}e^{6(\frac{8}{\sigma})^{t_1}}+\lambda\frac{c(n,\beta)}{\alpha\sigma\eta^{2}}\la P\ra_{ r,D(s,r)}\la P\ra^{\mathfrak{L}}_{ r,D(s,r)}
e^{6(\frac{8}{\sigma})^{t_1}}\\
&\leq&\frac{c(n,\beta)}{\alpha \eta^{2}}e^{7(\frac{8}{\sigma})^{t_1}}(\la P\ra^\lambda_{ r,D(s,r)})^2.
\end{eqnarray*}
\end{proof}

\subsubsection{Estimates on the frequencies.}
\begin{lemma}\label{small5}
There exists $K$ and $0<\alpha_+<\alpha$ so that
$$|\la k,\omega_+(\xi)\ra +\la l,\Omega_+(\xi)\ra|\geq \frac{\la l\ra\alpha_+}{A_k},\ \ |k|\leq K,\ |l|\leq2,$$
where $A_k=e^{|k|^{\tau/\beta}}(\beta>\tau)$.
\end{lemma}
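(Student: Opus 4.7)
The plan is to propagate the Diophantine condition from the previous-step frequencies $(\omega,\Omega)$ to the new frequencies $(\omega_+,\Omega_+)$ by a triangle inequality, exploiting the fact that the frequency shifts produced by one KAM step are small. Since the statement only demands the estimate for $|k|\leq K$ (finitely many $k$), all that has to be absorbed is a finite loss in the small-divisor margin, which can be compensated by slightly lowering $\alpha$ to some $\alpha_+<\alpha$.

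First, I would invoke the frequency-shift estimates already obtained at the current KAM step, which are of the same type as \eqref{thmestimate2}: $|\omega_+-\omega|_{\Pi}\preceq\varepsilon$ and $\|\Omega_+-\Omega\|_{2\beta,\Pi}\preceq\varepsilon$. The second of these yields, for every $j\geq 1$,
\begin{equation*}
|\Omega_{+,j}(\xi)-\Omega_j(\xi)|\preceq\frac{\varepsilon}{(1+\ln j)^{2\beta}}\leq\varepsilon,
\end{equation*}
the worst case $j=1$ being harmless since $(1+\ln 1)^{2\beta}=1$.

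Next, for $(k,l)\in\mathcal{Z}$ with $|k|\leq K$ and $|l|\leq 2$, I would use the inductive Diophantine condition on $(\omega,\Omega)$ (which holds on the parameter set produced at the previous step) together with the triangle inequality:
\begin{equation*}
|\la k,\omega_+(\xi)\ra+\la l,\Omega_+(\xi)\ra|
\geq |\la k,\omega(\xi)\ra+\la l,\Omega(\xi)\ra|-|k|\cdot|\omega_+-\omega|_\infty-|l|\max_{l_j\neq 0}|\Omega_{+,j}-\Omega_j|
\geq \frac{\la l\ra\alpha}{A_k}-c(|k|+2)\varepsilon.
\end{equation*}
Since $\la l\ra\geq 1$ and $A_k\geq 1$, to guarantee the right-hand side dominates $\la l\ra\alpha_+/A_k$ for all $|k|\leq K$ it suffices to require
\begin{equation*}
(\alpha-\alpha_+)\geq c(K+2)\varepsilon A_K,
\end{equation*}
which can always be satisfied, for instance by setting $\alpha_+=\alpha/2$ once $\varepsilon$ is taken small enough relative to $K$ (and thus enforcing $c(K+2)\varepsilon A_K\leq \alpha/2$). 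This yields the desired $\alpha_+\in(0,\alpha)$ and the inequality on $|k|\leq K$.

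The only delicate point, which is not really an obstacle but a book-keeping issue, is compatibility with the global KAM iteration: the cutoff $K$ must be chosen in tandem with the iteration parameters so that the losses $\alpha_\nu-\alpha_{\nu+1}$ at successive steps remain summable and the limiting $\alpha_\infty$ is positive. This is a standard choice and is independent of the logarithmic-decay setting of the present paper; the rapid growth $A_K=\exp(K^{\tau/\beta})$ is compensated by the super-exponential smallness of $\varepsilon$ forced by the $\gamma\alpha^5$ smallness condition of Theorem \ref{KAMtheorem} together with the geometric decay scheme used to define $K_\nu$.
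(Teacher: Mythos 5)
Your argument is the same as the paper's: decompose the new divisor via the triangle inequality into the old one plus the frequency shift, bound the shift by the step size ($|\widehat{\omega}|_\Pi+\|\widehat{\Omega}\|_{2\beta,\Pi}\preceq \|X_P\|_{r,D(s,r)}+\langle P\rangle_{r,D(s,r)}$, from $\widehat{N}=[R]$), note that for $|k|\leq K$ the required loss is at most $CK A_K\varepsilon$, and absorb it by defining $\alpha_+=\alpha-\widehat{\alpha}$ with $\widehat{\alpha}\succeq K A_K\varepsilon$, deferring positivity of $\alpha_+$ to the iteration scheme. Both in structure and in the key estimates this matches the paper's proof.
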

\begin{proof}Note that $\omega_+=\omega+\widehat{\omega},\ \Omega_+=\Omega+\widehat{\Omega}.$
Since $\widehat{\omega}_j(\xi)=\frac{\partial \widehat{N}}{\partial y_j}(0,0,0,0,\xi),$
we obtain that
$$|\widehat{\omega}|_{\Pi}\leq\sup_{D(s,r)\times\Pi}\Big|\frac{\partial \widehat{N}}{\partial y}\Big|\leq\|X_{\widehat{N}}\|_{r,D(s,r)}\preceq \|X_{P}\|_{r,D(s,r)}.$$
On the other hand,
$\widehat{\Omega}_j(\xi)=\frac{\partial^2 \widehat{N}}{\partial z_j\partial \bar{z}_j}(0,0,0,0,\xi),$ thus
$$\|\widehat{\Omega}\|_{2\beta,\Pi}\leq\sup_{D(s-\sigma,r)\times\Pi}\Big|\frac{\partial^2 \widehat{N}}{\partial z_j\partial \bar{z}_j}\Big|(1+\ln j)^{2\beta}\leq\la \widehat{N}\ra_{r,D(s-\sigma,r)}\preceq\la P\ra_{r,D(s,r)}.$$
Therefore,
$$|\widehat{\omega}|_{\Pi}+\|\widehat{\Omega}\|_{2\beta,\Pi}\preceq\|X_{P}\|_{r,D(s,r)}+\la P\ra_{r,D(s,r)}.$$
\indent Similarly, for the Lipschitz norms we obtain
\begin{eqnarray}\label{omegahat}
|\widehat{\omega}|^{\mathfrak{L}}_{\Pi}+\|\widehat{\Omega}\|^{\mathfrak{L}}_{2\beta,\Pi}\preceq\|X_{P}\|^{\mathfrak{L}}_{r,D(s,r)}+\la P\ra^{\mathfrak{L}}_{r,D(s,r)}.
\end{eqnarray}
Discussing different cases we easily obtain
$$|\la k,\widehat{\omega}\ra +\la l,\widehat{\Omega}\ra|\preceq|k|(\|X_{P}\|_{r,D(s,r)}+\la P\ra_{r,D(s,r)}).$$
If we choose $\displaystyle\widehat{\alpha}\geq CK\max_{|k|\leq K}A_K\big(\|X_{P}\|_{r,D(s,r)}+\la P\ra_{r,D(s,r)}\big),$
then for $|k|\leq K,$
\begin{eqnarray*}\label{}
|\la k,\omega_+(\xi)\ra +\la l,\Omega_+(\xi)\ra|&\geq& |\la k,\omega(\xi)\ra +\la l,\Omega(\xi)\ra|-|\la k,\widehat{\omega}(\xi)\ra +
\la l,\widehat{\Omega}(\xi)\ra|\\
&\geq&\frac{\la l\ra\alpha}{A_k}- C|k|\big(\|X_{P}\|_{r,D(s,r)}+\la P\ra_{r,D(s,r)}\big)\\
&\geq&\frac{\la l\ra\alpha_+}{A_k}
\end{eqnarray*}
with $\alpha_+=\alpha-\widehat{\alpha}$.\\
\indent It remains to show that $\alpha_+>0.$ This will be done in the KAM iteration below(see (\ref{diffalpha})).
\end{proof}
\subsubsection{The iterative lemma.}
 Denote $P_0 = P$ and $N_0 = N$. Then at the $\nu-$th step of the
Newton scheme, we have a Hamiltonian $H_\nu = N_\nu + P_\nu$ where
$$N_\nu=\sum_{j=1}^{n}\omega_{\nu, j}(\xi)y_j+\sum_{j\geq1}\Omega_{\nu, j}(\xi)z_j\bar{z}_j.$$
We will show that  there exists a  symplectic coordinates transformation
$\Phi_{\nu+1}: D_{\nu+1} \times\Pi_{\nu+1}\mapsto D_\nu$
such that  $H_{\nu+1} = H_{\nu} \circ\Phi_{\nu+1}= N_{\nu+1} + P_{\nu+1}$   satisfies the same assumptions   with
${\nu+1}$ in place of $\nu,$
 where  the new normal form $N_{\nu+1}$ is associated with the new
frequencies given by $\omega_{\nu+1,j}=\omega_{\nu, j}+\widehat{\omega}_{\nu, j},\ \Omega_{\nu+1,j}=\Omega_{\nu, j}+\widehat{\Omega}_{\nu, j}$ and $P_{\nu+1}$
 is given by
\begin{eqnarray*}\label{}
 P_{\nu+1}=(P_\nu-R_\nu)\circ X_{F_\nu}^1+\int_0^1\{R_\nu(t),F_\nu\}\circ X_{F_\nu}^tdt
\end{eqnarray*}
with $R_\nu(t)=(1-t)\widehat{N}_\nu+tR_\nu$.

Let $c_1$ be twice the maximum of all constants obtained during the KAM step. Set
$r_0 = r$, $s_0 = s$, $\alpha_0 = \alpha$ and $M_0 = M$. For $\nu\geq0$ set
$$\alpha_\nu=\frac{\alpha_0}{2}(1+2^{-\nu}),\ M_\nu=M_0(2-2^{-\nu}),\ \lambda_\nu=\frac{\alpha_\nu}{M_\nu},$$
and
$$\varepsilon_{\nu+1}=\frac{c_1\varepsilon_\nu^{\frac{133}{100}}}{\alpha_\nu^{\frac13}},\ \sigma_\nu=\frac{8\cdot700^{\iota-1}}{|\ln\varepsilon_\nu|^{\iota-1}},\ \eta_\nu^3=\frac{\varepsilon_\nu^{\frac{99}{100}}}{\alpha_\nu},\ s_{\nu+1}=s_\nu-5\sigma_\nu,\ r_{\nu+1}=\eta_\nu r_\nu, \beta=\iota \tau(\iota\geq 2),$$
and $D_\nu=D(s_\nu,r_\nu).$

The initial conditions are chosen in the following way: $\sigma_0 = s_0/48 \leq 1$ so that
$s_0 > s_1 > \cdots\geq s_0/2,$
and assume $\varepsilon_0\leq\gamma_0\alpha_0^5$ with $\gamma_0\leq\min\{(\frac{1}{8Mc_0})^4,c_2(s_0),(\frac{1}{4c_1})^{10}\}$ where
 $c_2(s_0)=\exp\{-\frac{48\cdot8\cdot700^{\iota-1}}{s_0}\}.$
Furthermore, we define
 $K_\nu=K_0(\frac{36}{25})^{\nu}$\ with\ $K_0=\frac{1}{4}\ln^2(\frac{1}{4c_1\gamma_0}).$
\begin{lemma}\label{Iterative lemma}
 (Iterative lemma). Suppose that $H_\nu = N_\nu + P_\nu$ is given on $D_\nu \times\Pi_\nu$,
where $N_\nu  = \sum\limits_{
1\leq j\leq n}
\omega_{_\nu ,j} (\xi)y_j +\sum\limits_{j\geq 1}
\Omega_{_\nu ,j}(\xi)z_j \bar{z}_j$ is a normal form satisfying
\begin{equation}\label{5.1}
|\omega_\nu|_{\Pi_\nu}^{\mathfrak{L}}+\|\Omega_\nu\|_{2\beta,\Pi_\nu}^{\mathfrak{L}}\leq M_\nu,
\end{equation}
 $$|\la k,\omega_\nu(\xi)\ra +\la l,\Omega_\nu(\xi)\ra|\geq \frac{\la l\ra\alpha_\nu}{A_k},\ \ (k,l)\in\mathcal{Z},$$
on $\Pi_\nu$ and
\begin{eqnarray}\label{5.2}
\la P\ra_{ r_\nu,D_\nu}^{\lambda_\nu}+\| X_{P}\|_{ r_\nu,D_\nu}^{\lambda_\nu}\leq \varepsilon_\nu.
\end{eqnarray}
Then there exists a Lipschitz family of real analytic symplectic coordinates transformations
$\Phi_{\nu+1}: D_{\nu+1} \times\Pi_{\nu+1}\mapsto D_\nu$ with a closed subset $\Pi_{\nu+1}=\Pi_\nu\setminus\bigcup_{|k|>K_\nu}\mathcal{R}_{kl}^{{\nu+1}}(\alpha_{\nu+1})$ of $\Pi_\nu,$ where
$$\mathcal{R}_{kl}^{{\nu+1}}(\alpha_{\nu+1})=\Big\{\xi\in\Pi_\nu:\ |\la k,\omega_{\nu+1}(\xi)\ra +\la l,\Omega_{\nu+1}(\xi)\ra|< \frac{\la l\ra\alpha_{\nu+1}}{A_k}\Big\}$$
such that for $H_{\nu+1} = H_{\nu} \circ\Phi_{\nu+1}= N_{\nu+1} + P_{\nu+1}$, the same assumptions (\ref{5.1}) and (\ref{5.2}) are satisfied with
${\nu+1}$ in place of $\nu.$
\end{lemma}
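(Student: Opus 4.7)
The plan is to realize one KAM step as a direct assembly of the machinery of Subsections 5.1--5.3 and then verify that the sequences $(\sigma_\nu, \eta_\nu, r_\nu, s_\nu, \alpha_\nu, \varepsilon_\nu, M_\nu, K_\nu)$ fixed above close under this step. First I would form the Taylor truncation $R_\nu$ of $P_\nu$ as in (\ref{2.9}) and invoke Lemma \ref{XFesti} together with Lemma \ref{fsemi} to produce $\widehat{N}_\nu = [R_\nu]$ and $F_\nu\in\Gamma^{\beta,+}_{r_\nu, D(s_\nu-\sigma_\nu, r_\nu)}$ solving $\{F_\nu, N_\nu\}+\widehat{N}_\nu=R_\nu$. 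The small divisor hypothesis needed by Lemma \ref{XFesti} is exactly (\ref{5.1}) at step $\nu$ with $A_k=e^{|k|^{\tau/\beta}}$ and $t_1=\tau/(\beta-\tau)=1/(\iota-1)$, and Lemma \ref{p-rsemi} lets us pass from $P_\nu$ to $R_\nu$, so these lemmas deliver
\begin{equation*}
\la F_\nu\ra^{+,\lambda_\nu}_{r_\nu, D(s_\nu-\sigma_\nu, r_\nu)}+\|X_{F_\nu}\|^{\lambda_\nu}_{r_\nu, D(s_\nu-\sigma_\nu, r_\nu)}\preceq \frac{c(n,\beta)\, e^{6(8/\sigma_\nu)^{t_1}}}{\alpha_\nu}\,\varepsilon_\nu.
\end{equation*}

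Second, I would set $\Phi_{\nu+1}:=X^1_{F_\nu}$, the time-1 flow of $F_\nu$, and verify by direct inspection of the parameter choices that (\ref{gapflowmap}) in Proposition \ref{map} and (\ref{iterationtiaojian}) in Lemma \ref{p+} both hold at step $\nu$. Indeed, with $\sigma_\nu=8\cdot 700^{\iota-1}/|\ln\varepsilon_\nu|^{\iota-1}$ one computes $(8/\sigma_\nu)^{t_1}=|\ln\varepsilon_\nu|/700$, so $e^{7(8/\sigma_\nu)^{t_1}}\leq \varepsilon_\nu^{-1/100}$, which is swallowed by a small power of $\varepsilon_\nu$. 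Lemma \ref{flowmap} and Lemma \ref{daoshul} then ensure $\Phi_{\nu+1}:D_{\nu+1}\times\Pi_{\nu+1}\to D_\nu$ with the symplectic coordinate change close to the identity, and Lemma \ref{p+} combined with the defining identity for $P_{\nu+1}$ gives
\begin{equation*}
\la P_{\nu+1}\ra^{\lambda_\nu}_{r_{\nu+1}, D_{\nu+1}}+\|X_{P_{\nu+1}}\|^{\lambda_\nu}_{r_{\nu+1}, D_{\nu+1}}\preceq \frac{c\, e^{7(8/\sigma_\nu)^{t_1}}}{\alpha_\nu\eta_\nu^2}\,\varepsilon_\nu^2+\eta_\nu\varepsilon_\nu.
\end{equation*}
Plugging in $\eta_\nu^3=\varepsilon_\nu^{99/100}/\alpha_\nu$ and the bound on the exponential, each term is at most $(c/2)\,\varepsilon_\nu^{133/100}/\alpha_\nu^{1/3}$, matching the definition of $\varepsilon_{\nu+1}$ once $c_1$ is chosen large enough; the transition $\lambda_\nu\to\lambda_{\nu+1}$ on the weighted norm is handled by the standard interpolation argument of \cite{Pos} since $\lambda_{\nu+1}\leq\lambda_\nu$.

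Third, I would set $N_{\nu+1}:=N_\nu+\widehat{N}_\nu$ and check the two conditions at step $\nu+1$. The Lipschitz bound (\ref{5.1}) propagates because Lemma \ref{small5} and (\ref{omegahat}) yield $|\widehat{\omega}_\nu|^{\mathfrak L}_{\Pi_\nu}+\|\widehat{\Omega}_\nu\|^{\mathfrak L}_{2\beta,\Pi_\nu}\preceq M_\nu\varepsilon_\nu/\alpha_\nu$, which is bounded by $M_{\nu+1}-M_\nu=M_0 2^{-(\nu+1)}$ thanks to the smallness $\varepsilon_0\leq\gamma_0\alpha_0^5$. For the small divisor condition at step $\nu+1$, Lemma \ref{small5} ensures that for $|k|\leq K_\nu$ the condition survives on all of $\Pi_\nu$ provided $\widehat{\alpha}_\nu:=\alpha_\nu-\alpha_{\nu+1}=\alpha_0 2^{-(\nu+2)}$ dominates $CK_\nu A_{K_\nu}\varepsilon_\nu$; this is an arithmetic inequality between the super-exponentially decaying $\varepsilon_\nu$ and the exponentially growing $A_{K_\nu}=e^{K_\nu^{1/\iota}}$, which the choices $K_\nu=K_0(36/25)^\nu$ and $K_0=\tfrac14\ln^2(1/(4c_1\gamma_0))$ are precisely calibrated to verify. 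For $|k|>K_\nu$ the bad set is excised by defining $\Pi_{\nu+1}=\Pi_\nu\setminus\bigcup_{|k|>K_\nu}\mathcal{R}^{\nu+1}_{kl}(\alpha_{\nu+1})$, its Lebesgue measure being controlled in Section \ref{S6}.

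The main obstacle is precisely this bookkeeping around the exponential factor $e^{c(8/\sigma_\nu)^{t_1}}$ arising from the logarithmic-decay homological estimate. Unlike the polynomial-decay setting of \cite{GT}, where $\sigma_\nu$ may decrease geometrically, here the small divisor denominator $A_k=e^{|k|^{\tau/\beta}}$ forces $\sigma_\nu$ to decrease only like $|\ln\varepsilon_\nu|^{1-\iota}$, and the entire scheme rests on the matching identity $(8/\sigma_\nu)^{t_1}=|\ln\varepsilon_\nu|/700$. This balance is what forces $\beta=\iota\tau$ with $\iota\geq 2$ in (B2) and, after being combined with the condition $\tau\geq n+2$ extracted from the measure estimates of Section \ref{S6}, yields the final requirement $\beta\geq 2(n+2)$ in Theorem \ref{maintheorem}.
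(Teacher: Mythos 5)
Your proposal is correct and follows essentially the same route as the paper: form $R_\nu$, solve the homological equation via Lemmas \ref{XFesti} and \ref{fsemi}, set $\Phi_{\nu+1}=X^1_{F_\nu}$, estimate the new error by Lemma \ref{p+}, propagate the Lipschitz frequency bound via (\ref{omegahat}), protect small divisors up to $|k|\leq K_\nu$ by Lemma \ref{small5} together with the inequality $\alpha_\nu-\alpha_{\nu+1}\geq c_1K_\nu A_{K_\nu}\varepsilon_\nu$, and excise resonances for $|k|>K_\nu$. Your explicit check that $(8/\sigma_\nu)^{t_1}=|\ln\varepsilon_\nu|/700$, hence $e^{7(8/\sigma_\nu)^{t_1}}=\varepsilon_\nu^{-1/100}$, and the resulting bound $c_1\varepsilon_\nu^{133/100}\alpha_\nu^{-1/3}=\varepsilon_{\nu+1}$ is exactly the bookkeeping the paper compresses into ``by induction one verifies''; the only minor slip is referring to the passage $\lambda_\nu\to\lambda_{\nu+1}$ as interpolation when it is simply monotonicity of $\|\cdot\|^\lambda$ in $\lambda$ (one applies Lemma \ref{p+} directly at $\lambda=\lambda_{\nu+1}\leq\lambda_\nu$).
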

\begin{proof}
By induction one verifies that
 $$\varepsilon_\nu\leq\frac{\alpha_\nu^2\eta_\nu^2e^{-7(\frac{8}{\sigma_\nu})^{t_1}}}{M_\nu c_0},\ t_1=\frac{\tau}{\beta-\tau}=\frac{1}{\iota-1}.$$
  So the smallness condition (\ref{iterationtiaojian}) at the $\nu$-th
 KAM step is satisfied, and there exists a transformation $\Phi_{\nu+1}: D_{\nu+1} \times\Pi_{\nu+1}\mapsto D_\nu$
taking $H_\nu$ into $H_{\nu+1} = N_{\nu+1} + P_{\nu+1}$. From Lemma \ref{p+}, the new error term satisfies the estimate
\begin{eqnarray*}\label{}
\la P_{\nu+1}\ra_{ r_{\nu+1},D_{\nu+1}}^{\lambda_{\nu+1}}+\| X_{P_{\nu+1}}\|_{ r_{\nu+1},D_{\nu+1}}^{\lambda_{\nu+1}}\leq \frac{c_1}{2}\Big(\frac{e^{7\cdot(\frac{8}{\sigma_\nu})^{t_1}}}{\alpha_\nu\eta_\nu^2}\varepsilon_\nu^2+\eta_\nu\varepsilon_\nu\Big)\leq \varepsilon_{\nu+1}.
\end{eqnarray*}
In view of (\ref{5.1}) the Lipschitz semi-norm of the new frequencies is bounded by
\begin{eqnarray*}\label{}
|\omega_{\nu+1}|_{\Pi_\nu}^{\mathfrak{L}}+\|\Omega_{\nu+1}\|_{2\beta,\Pi_\nu}^{\mathfrak{L}}&\leq& M_\nu+|\widehat{\omega}_\nu|_{\Pi_\nu}^{\mathfrak{L}}+\|\widehat{\Omega}_\nu\|_{2\beta,\Pi_\nu}^{\mathfrak{L}}\\
&\leq& M_\nu+\frac{c_1}{2}\varepsilon_\nu\frac{M_\nu}{\alpha_\nu}\leq M_\nu(1+\frac{1}{2^{\nu+2}})\\
&\leq& M_{\nu+1},
\end{eqnarray*}
where the second inequality is from (\ref{omegahat}). \\
\indent Finally, one verifies that $
\alpha_\nu-\alpha_{\nu+1}\geq c_1K_\nu e^{K_\nu^{\tau/\beta}}\varepsilon_\nu,$ hence
\begin{eqnarray}\label{diffalpha}
\alpha_\nu-\alpha_{\nu+1}\geq c_1K_\nu e^{K_\nu^{\tau/\beta}}\big(\la P\ra_{ r_\nu,D_\nu}^{\lambda_\nu}+\| X_{P}\|_{ r_\nu,D_\nu}^{\lambda_\nu}\big).
\end{eqnarray}
Therefore, by Lemma \ref{small5}, the small divisor estimates hold for the new frequencies with parameter
$\alpha_{\nu+1}$ up to $|k| \leq K_\nu$. Removing from $\Pi_\nu$ the union of the resonance zones
$\mathcal{R}_{kl}^{{\nu+1}}(\alpha_{\nu+1})$ for $|k| > K_\nu$ we obtain the parameter domain $\Pi_{\nu+1}\subset\Pi_\nu$  with the
required properties.
\end{proof}

\subsubsection{Proof of Theorem \ref{KAMtheorem}.} We follow the proofs in   \cite{GT} and \cite{Pos}. For readers' convenience, we use the same notations as in \cite{GT}. Firstly as \cite{Pos}, we have the estimates.
\begin{lemma}\label{convergelemma}
For $\nu\geq0,$
\begin{eqnarray*}
\frac{1}{\sigma_\nu}\|\Phi_{\nu+1}-id\|_{r_\nu,D_{\nu+1}}^{\lambda_\nu},\quad\|D\Phi_{\nu+1}-I\|_{r_\nu,r_\nu,D_{\nu+1}}^{\lambda_\nu}&\leq&{c_1e^{4(\frac{4}{\sigma_\nu})^{t_1}}}\alpha_\nu^{-1}\varepsilon_\nu,\\
|\omega_{\nu+1}-\omega_\nu|_{\Pi_\nu}^{\lambda_\nu}, \quad\|\Omega_{\nu+1}-\Omega_\nu\|_{2\beta,\Pi_\nu}^{\lambda_\nu}&\leq& c_1\varepsilon_\nu.
\end{eqnarray*}
\end{lemma}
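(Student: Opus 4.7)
The plan is to bound each quantity by the relevant size of the generating Hamiltonian $F_\nu$ and then invoke Lemma \ref{XFesti}, Lemma \ref{fsemi} and Lemma \ref{xpr}, together with the smallness assumption $\la P_\nu\ra^{\lambda_\nu}_{r_\nu,D_\nu}+\|X_{P_\nu}\|^{\lambda_\nu}_{r_\nu,D_\nu}\le\varepsilon_\nu$.

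\textbf{Step 1: Estimate on $X_{F_\nu}$.} Let $R_\nu$ be the degree-2 Taylor jet of $P_\nu$. By Lemma \ref{xpr} one has $\|X_{R_\nu}\|^{*}_{r_\nu,D_\nu}\preceq\|X_{P_\nu}\|^{*}_{r_\nu,D_\nu}$. Since the $\nu$-th small divisor condition holds on $\Pi_\nu$ with $\alpha=\alpha_\nu$ and $A_k=e^{|k|^{\tau/\beta}}$, Lemma \ref{XFesti} applied on $D(s_\nu-\sigma_\nu,r_\nu)$ gives
\begin{equation*}
\|X_{F_\nu}\|_{r_\nu,D(s_\nu-\sigma_\nu,r_\nu)}\le\frac{c(n,\beta)e^{3(4/\sigma_\nu)^{t_1}}}{\alpha_\nu}\|X_{R_\nu}\|_{r_\nu,D_\nu},
\end{equation*}
and an analogous Lipschitz bound with an extra factor $M_\nu/\alpha_\nu$. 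Combining with the definition $\lambda_\nu=\alpha_\nu/M_\nu$ this extra factor is absorbed into the $\lambda_\nu$ scaling, yielding
\begin{equation*}
\|X_{F_\nu}\|^{\lambda_\nu}_{r_\nu,D(s_\nu-\sigma_\nu,r_\nu)}\le \frac{c(n,\beta)e^{3(4/\sigma_\nu)^{t_1}}}{\alpha_\nu}\varepsilon_\nu.
\end{equation*}

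\textbf{Step 2: Estimate on $\Phi_{\nu+1}-\mathrm{id}$ and $D\Phi_{\nu+1}-I$.} The map $\Phi_{\nu+1}=X^1_{F_\nu}$ satisfies $\Phi_{\nu+1}-\mathrm{id}=\int_0^1 X_{F_\nu}\circ X^t_{F_\nu}\,dt$. Since, under the smallness condition of Lemma \ref{flowmap}, the flow maps $D_{\nu+1}$ into $D(s_\nu-\sigma_\nu,r_\nu)$, the above integral representation gives
\begin{equation*}
\|\Phi_{\nu+1}-\mathrm{id}\|^{\lambda_\nu}_{r_\nu,D_{\nu+1}}\preceq \|X_{F_\nu}\|^{\lambda_\nu}_{r_\nu,D(s_\nu-\sigma_\nu,r_\nu)}.
\end{equation*}
Dividing by $\sigma_\nu$ and using $\sigma_\nu^{-1}e^{3(4/\sigma_\nu)^{t_1}}\le e^{4(4/\sigma_\nu)^{t_1}}/2$ for small $\sigma_\nu$, one recovers the first claimed bound. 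For $D\Phi_{\nu+1}-I$, the standard Cauchy estimate between the domains $D(s_\nu-\sigma_\nu,r_\nu)\supset D_{\nu+1}$ (there is a spatial gap of order $\sigma_\nu$ in $\theta$ and a comparable gap in $y,z,\bar z$) converts the bound on $\Phi_{\nu+1}-\mathrm{id}$ into one on its derivative, costing only a factor $\sigma_\nu^{-1}$ that is again absorbed into the exponential.

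\textbf{Step 3: Frequency estimates.} The frequency shifts are given by
\begin{equation*}
\widehat\omega_{\nu,j}(\xi)=\frac{\partial \widehat N_\nu}{\partial y_j}(0,0,0;\xi),\qquad \widehat\Omega_{\nu,j}(\xi)=\frac{\partial^2\widehat N_\nu}{\partial z_j\partial\bar z_j}(0,0,0;\xi),
\end{equation*}
so by Lemma \ref{XFesti} and Lemma \ref{xpr},
\begin{equation*}
|\widehat\omega_\nu|^{\lambda_\nu}_{\Pi_\nu}\le \|X_{\widehat N_\nu}\|^{\lambda_\nu}_{r_\nu,D_\nu}\le \|X_{R_\nu}\|^{\lambda_\nu}_{r_\nu,D_\nu}\preceq \|X_{P_\nu}\|^{\lambda_\nu}_{r_\nu,D_\nu}\le\varepsilon_\nu.
\end{equation*}
For the normal frequencies I argue via the weighted norm $\la\cdot\ra$: by Lemma \ref{fsemi} one has $\la\widehat N_\nu\ra^{\lambda_\nu}_{r_\nu,D(s_\nu-\sigma_\nu,r_\nu)}\preceq\la R_\nu\ra^{\lambda_\nu}_{r_\nu,D_\nu}\preceq\la P_\nu\ra^{\lambda_\nu}_{r_\nu,D_\nu}\le\varepsilon_\nu$, and the definition of $\la\widehat N_\nu\ra$ controls the $(1+\ln j)^{2\beta}$-weighted sup of $|\widehat\Omega_{\nu,j}|$, giving $\|\widehat\Omega_\nu\|^{\lambda_\nu}_{2\beta,\Pi_\nu}\preceq\varepsilon_\nu$. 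Since $\omega_{\nu+1}-\omega_\nu=\widehat\omega_\nu$ and $\Omega_{\nu+1}-\Omega_\nu=\widehat\Omega_\nu$, both claimed frequency estimates follow.

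\textbf{Main obstacle.} The one slightly delicate point is the bookkeeping of the Lipschitz semi-norm: Lemma \ref{XFesti} and Lemma \ref{fsemi} produce an unfavourable factor $M_\nu/\alpha_\nu$ when estimating $\|X_F\|^{\mathfrak L}$ and $\la F\ra^{\mathfrak L}$ from $\|X_R\|$, $\la R\ra$. The clean formulation in terms of the mixed norm $\|\cdot\|^{\lambda_\nu}=\|\cdot\|+\lambda_\nu\|\cdot\|^{\mathfrak L}$ with $\lambda_\nu=\alpha_\nu/M_\nu$ is exactly designed to absorb this factor, so the bounds close up uniformly in $\nu$; checking that the exponents $e^{3(4/\sigma_\nu)^{t_1}}$ from the small divisors combine with the Cauchy loss $1/\sigma_\nu$ to give at most $e^{4(4/\sigma_\nu)^{t_1}}$ is the only nontrivial arithmetic.
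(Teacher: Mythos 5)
Your proof is correct and follows the same route the paper itself takes, namely the scheme of P\"oschel [Pos]: the paper in fact gives no proof of this lemma, merely writing ``Firstly as [Pos], we have the estimates,'' so your sketch supplies exactly the details that are being delegated. The essential ingredients — bounding $X_{F_\nu}$ via the homological-equation estimates of Lemma~\ref{XFesti}, using the integral representation $\Phi_{\nu+1}-\mathrm{id}=\int_0^1 X_{F_\nu}\circ X^t_{F_\nu}\,dt$ together with Lemma~\ref{flowmap} to keep the trajectory in $D(s_\nu-\sigma_\nu,r_\nu)$, a Cauchy estimate for $D\Phi_{\nu+1}-I$, and reading the frequency shifts off $\widehat N_\nu$ via Lemma~\ref{fsemi} and Lemma~\ref{xpr} — are the standard ones, and the bookkeeping with $\lambda_\nu=\alpha_\nu/M_\nu$ to absorb the extra factor $M_\nu/\alpha_\nu$ in the Lipschitz estimates is exactly how [Pos] (and the paper, implicitly) closes the argument. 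One point you gloss over is that the Lipschitz part of $\|\Phi_{\nu+1}-\mathrm{id}\|^{\lambda_\nu}$ involves not only $\Delta_{\xi\eta}X_{F_\nu}$ but also $DX_{F_\nu}\cdot\Delta_{\xi\eta}X^t_{F_\nu}$ from the chain rule, so one also needs the Lipschitz dependence of the flow on $\xi$ (controlled by Lemma~\ref{daoshul}); this is handled in [Pos] and is a minor omission rather than a gap.
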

Now suppose the assumptions of Theorem \ref{KAMtheorem} are satisfied. To apply the iterative lemma(Lemma \ref{Iterative lemma}) with $\nu=0$, set $s_0=s,\ r_0=r,\ \ldots,\ N_0=N,\ P_0=P$ and $\gamma_0=\gamma,\ \alpha_0=\alpha,\ M_0=M.$ The smallness condition is satisfied, because
\begin{eqnarray}\label{smallness}
\varepsilon=\la P\ra_{ r_0,D_0}^{\lambda_0}+\| X_{P}\|_{ r_0,D_0}^{\lambda_0}\leq \gamma_0\alpha_0^5=\varepsilon_0.
\end{eqnarray}
\indent The small divisor conditions are satisfied by setting $\Pi_0=\Pi\setminus\bigcup_{k,l}\mathcal{R}_{kl}^{{0}}(\alpha_{0}).$
Then the iterative lemma applies, and we obtain a decreasing sequence of domains $D_\nu\times\Pi_\nu$ and transformations $\Phi^{\nu}=\Phi_1\circ\cdots\circ\Phi_\nu:\ D_\nu\times\Pi_{\nu-1}\rightarrow D_{\nu-1}$ for $\nu\geq1,$ such that $H\circ\Phi^\nu=N_\nu+P_\nu.$ Moreover the estimates in Lemma \ref{convergelemma} hold.\\
\indent
From Lemma \ref{convergelemma} we have
$
\|D\Phi_{\nu}\|_{r_{\nu-1},r_{\nu-1},D_{\nu}}^{\lambda_\nu}\leq 1+{c_1e^{4(\frac{4}{\sigma_{\nu-1}})^{t_1}}}\alpha_{\nu-1}^{-1}\varepsilon_{\nu-1},
$ and thus
\begin{eqnarray}\label{dphinu}
\|D\Phi^{\nu}\|_{r_{0},r_{\nu},D_{\nu}}\leq\Pi_{n=0}^\infty(1+2^{-n-2})\leq2,
\end{eqnarray}
for all $\nu\geq0.$ Similarly we have
$
\|D\Phi^{\nu}\|^\mathfrak{L}_{r_{0},r_{\nu},D_{\nu}}\leq2.
$
Thus,
$
\|\Phi^{\nu+1}-\Phi^{\nu}\|^{\lambda_0}_{r_{0},D_{\nu+1}}\preceq\|\Phi_{\nu+1}-id\|^{\lambda_\nu}_{r_{\nu},D_{\nu+1}}.
$
So $\Phi^{\nu}$ converge uniformly on $\bigcap D_\nu\times\Pi_\nu=D(s/2)\times\Pi_\alpha$ to a Lipschitz continuous family of real analytic torus embeddings $\Phi:\ \T^n\times\Pi_\alpha\rightarrow\mathcal{P}^p$.\\
\indent From (\ref{dphinu}) we obtain
$
\|\Phi^{\nu+1}-\Phi^{\nu}\|_{r_{0},D_{\nu+1}}\leq2c_1\alpha_\nu^{-1}\varepsilon_\nu^{\frac{99}{100}}.
$
It follows
\begin{eqnarray*}\label{}
\|\Phi^{\nu+1}-id\|_{r_{0},D(s/2)\times\Pi_\alpha}\leq \sum_{n=0}^\nu 2c_1\alpha_\nu^{-1}\varepsilon_\nu^{\frac{99}{100}}\preceq \alpha_0^{-1}\varepsilon_0^{\frac{99}{100}}\preceq\varepsilon_0^{\frac{1}{2}}.
\end{eqnarray*}
Notice (\ref{smallness}), the estimate (\ref{thmestimate1}) holds on $D(s/2)\times\Pi_\alpha$. The similar discussion in \cite{GT} shows us that the estimate  (\ref{thmestimate1}) can be extended to the domain $D(s/2,r/2).$ The estimates (\ref{thmestimate2}) and (\ref{thmestimate3}) are simple and we omit the details.\\
\indent Note that  $\Phi$ is analytic on $D(s/2,r/2)$, we deduce  that $H\circ\Phi=N^*+P^*$ is analytic on  $D(s/2,r/2)$. We need to prove that
$
\partial_yP^*=\partial_zP^*=\partial_{\bar{z}}P^*=0,\quad\partial^2_{z_iz_j}P^*=\partial^2_{z_i\bar{z}_j}P^*=\partial^2_{\bar{z}_i\bar{z}_j}P^*=0
$
on $D(s/2)\times\Pi_\alpha$.  In the following we only give the proof for
$
 \partial^2_{z_i\bar{z}_j}P^* =0
$
and omit the proofs for the others.\\
\indent Note that $\|\partial^2_{z_i\bar{z}_j}P_\nu\|_{D(s/2)}\leq\varepsilon_\nu$ and
 $\|\partial^2_{z_i\bar{z}_j}(P_\nu-P_{\nu+1})\|_{D(s/2)}\preceq\varepsilon_\nu+\varepsilon_{\nu+1}\preceq\varepsilon_\nu$. It then follows
\begin{eqnarray*}\label{}
\|\partial^2_{z_i\bar{z}_j}(P_\nu-P^*)\|_{D(s/2)} \leq\sum_{k=\nu}^\infty\|\partial^2_{z_i\bar{z}_j}(P_k-P_{k+1})\|_{D(s/2)}\preceq\varepsilon_\nu
\end{eqnarray*}
and so,
{$$\|\partial^2_{z_i\bar{z}_j}P^*\|_{D(s/2)}\leq \|\partial^2_{z_i\bar{z}_j}P_\nu\|_{D(s/2)}+\|\partial^2_{z_i\bar{z}_j}(P_\nu-P^*)\|_{D(s/2)} \preceq\varepsilon_\nu$$
for all $\nu$ which means that $\partial^2_{z_i\bar{z}_j}P^*=0$ on $D(s/2)\times \Pi_{\alpha}$.}\qed
\section{Measure estimates}\label{S6}
In this section we  prove the measure estimates.
\begin{Theorem}\label{Meastheorem}
Let $\omega_\nu,\ \Omega_\nu$ for $\nu\geq0$ be Lipschitz maps on $\Pi$ satisfying
$$|\omega_\nu-\omega|,\ \|\Omega_\nu-\Omega\|_{2\beta}\leq\alpha,\ \ |\omega_\nu-\omega|^{\mathfrak{L}},\ \|\Omega_\nu-\Omega\|^{\mathfrak{L}}_{2\beta}\leq \frac{1}{2L},$$
and define the sets $\mathcal{R}_{kl}^\nu(\alpha)$ as in Lemma \ref{Iterative lemma} choosing $\tau\geq n+2.$
Then
$$Meas(\Pi\setminus\Pi_\alpha)\leq Meas(\bigcup\mathcal{R}_{kl}^\nu(\alpha))\rightarrow0, \qquad  as\ \alpha\rightarrow0.$$
\end{Theorem}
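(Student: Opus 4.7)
The plan is to bound $\mathrm{Meas}(\mathcal{R}_{kl}^\nu(\alpha))$ for each $(k,l)\in\mathcal{Z}$ and each KAM step $\nu$, then sum. The key analytic input is the hypothesis $|\omega^{-1}|^{\mathfrak{L}}\leq L$ together with the smallness $|\omega-\omega_\nu|^{\mathfrak{L}}\leq 1/(2L)$: this forces $\xi\mapsto\langle k,\omega_\nu(\xi)\rangle$ to vary at rate $\succeq |k|/(2L)$ along the direction $k/|k|$, and combined with $\|\Omega_\nu-\Omega\|_{2\beta}^{\mathfrak{L}}\leq 1/(2L)$ yields the baseline Fubini estimate
\[
\mathrm{Meas}\bigl(\mathcal{R}_{kl}^\nu(\alpha)\bigr)\;\preceq\;\frac{L\,\langle l\rangle\,\alpha}{|k|\,A_k}.
\]
For $|l|\leq 1$, and for $|l|=2$ of the form $l=\pm(e_i+e_j)$ (where $\langle l,\Omega_\nu\rangle\geq 2-O(\alpha)$ restricts the resonance set to $i+j\preceq|k|$), I would sum this directly; the lattice sum converges because $A_k=\exp(|k|^{1/\iota})$ beats every polynomial.

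The remaining, and delicate, subcase is $l=e_{j+b}-e_j$, $b\geq 1$ (so $\langle l\rangle=b$), for which
\[
f^\nu_{k,j,b}(\xi):=\langle k,\omega_\nu(\xi)\rangle+\Omega_{\nu,j+b}(\xi)-\Omega_{\nu,j}(\xi)=\langle k,\omega_\nu(\xi)\rangle+2b+O\!\bigl(\alpha(1+\ln j)^{-2\beta}\bigr),
\]
and a naive sum of the baseline estimate over $(j,b)$ is infinite. Following the sketch in the introduction, I would introduce the threshold $j_0:=\exp\bigl((c|k|^{\tau})^{1/(2\beta)}\bigr)$, so that $(1+\ln j_0)^{2\beta}\geq c|k|^\tau$, and split on $j<j_0$ versus $j\geq j_0$. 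For $j\geq j_0$ I would excise once (per $b$) the set $E_{k,b}^\nu:=\{|\langle k,\omega_\nu(\xi)\rangle+2b|<2\alpha b/|k|^\tau\}$ of measure $\preceq L\alpha b/|k|^{\tau+1}$: on its complement the residual $O(\alpha(1+\ln j)^{-2\beta})\leq\alpha b/|k|^\tau$ is too small to undo the lower bound, so $|f^\nu_{k,j,b}|\geq\alpha b/|k|^\tau\geq\alpha\langle l\rangle/A_k$ and the $j\geq j_0$ portion of the resonance set is already empty. For $1\leq j<j_0$ the baseline Lipschitz bound gives $\preceq L\alpha b/(|k|A_k)$ per $(j,b)$; summing over $1\leq j<j_0$ and $1\leq b\preceq|k|$ (beyond which the resonance set is trivially empty) and adding the baseline contribution yields, per $k$,
\[
\frac{L\alpha\,|k|\,j_0}{A_k}+\frac{L\alpha}{|k|^{\tau-1}}\;\preceq\;\frac{L\alpha\,|k|\,\exp(c|k|^{1/(2\iota)})}{\exp(|k|^{1/\iota})}+\frac{L\alpha}{|k|^{\tau-1}}.
\]

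To finish, I would multiply by the number $\sim|k|^{n-1}$ of lattice points at norm $|k|$ and sum. The first term decays super-polynomially because $|k|^{1/(2\iota)}\ll|k|^{1/\iota}$, while the second produces $\sum_N N^{2n-\tau-1}$, finite precisely when $\tau\geq n+2$; this is the origin of that hypothesis. Summing over KAM steps $\nu$ is harmless: only $|k|>K_\nu$ contributes at step $\nu+1$, and with $K_\nu$ growing geometrically the per-step tail $\preceq\alpha_\nu K_\nu^{n-\tau+1}$ is geometrically summable. The main obstacle I foresee is justifying, uniformly in $j\geq j_0$, that a single $j$-independent exclusion $E_{k,b}^\nu$ really suffices; this requires the stretched-exponential vs.\ polynomial comparison $A_k=\exp(|k|^{1/\iota})\geq c|k|^\tau$, which holds only for $|k|$ larger than a threshold $K_*(\tau,\iota)$. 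The finitely many small-$|k|$ exceptions must be absorbed using Assumption (A3) (whose zero-measure conclusion for $\{k\cdot\omega+l\cdot\Omega=0\}$ gives vanishing measure as $\alpha\to 0$), in the spirit of \cite{GT,Pos}.
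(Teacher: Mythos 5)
Your proposal tracks the paper's proof (Sect.~\ref{S6}) quite closely: the Fubini baseline estimate, the threshold $j_0$ depending on $|k|^{\tau}/(1+\ln j_0)^{2\beta}$, the split on $j<j_0$ versus $j\geq j_0$ with a $j$-independent majorizing set for the tail, the bound $b\preceq|k|$ from Lemma~\ref{514}'s companion, the observation that $\tau\geq n+2$ makes $\sum_{|k|}|k|^{n-1}\cdot|k|^{1-\tau}$ converge, the requirement $|k|\geq K_*(\tau,\iota)$ for the stretched-exponential/polynomial comparison, and finally absorbing the finitely many small-$|k|$ zones through Assumption (A3) all appear in the paper's Lemmas~\ref{rrrr}, \ref{518}, \ref{520} and the closing paragraph of the proof.

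There is, however, a genuine gap. Your identity
$f^\nu_{k,j,b}(\xi)=\langle k,\omega_\nu(\xi)\rangle+2b+O\bigl(\alpha(1+\ln j)^{-2\beta}\bigr)$
uses the special QHO structure $\Omega_j=2j-1$. In the abstract setting of Theorem~\ref{Meastheorem}, Assumption~(A2) only guarantees $\Omega_j(\xi)=\overline{\Omega}_j+O(M_1 j^{\delta})$ with $\overline{\Omega}_j=a_1j+a_2$ and $\delta<0$, plus the KAM drift $O(\alpha(1+\ln j)^{-2\beta})$. The $M_1j^{\delta}$ term is $\alpha$-independent and does not shrink under iteration, so your $j$-independent excision $E^\nu_{k,b}$ of radius $2\alpha b/|k|^{\tau}$ cannot control it: for $j$ slightly above your $j_0=\exp((c|k|^{\tau})^{1/(2\beta)})$ one may have $M_1j_0^{\delta}\gg\alpha b/|k|^{\tau}$ once $\alpha$ is small, and the $j\geq j_0$ portion is then not empty after the excision. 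The paper fixes this by taking $j_0=\max\{\exp(|k|^{(\tau-1)/(2\beta)}),\ \alpha^{\gamma/\delta}|k|^{(1-\tau)/\delta}\}$ so that $M_1j_0^{\delta}\preceq\alpha^{\gamma}|k|^{1-\tau}$, and the balance between this term and the $\alpha j_0/A_k$ term forces the final exponent $\mu=\delta/(\delta-1)<1$ in Lemma~\ref{518}, yielding $\mathrm{Meas}(\mathcal{R}'_k(\alpha))\preceq\alpha^{\mu}/|k|^{\tau-1}$ rather than your $\alpha/|k|^{\tau-1}$. The qualitative conclusion ($\to 0$ as $\alpha\to 0$) is unaffected, but as written your argument is only valid in the degenerate case $\Omega_j\equiv\overline{\Omega}_j$; in the general setting the two-term choice of $j_0$ and the resulting $\alpha^{\mu}$ are indispensable.
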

 In estimating the measure of the resonance zones it is not necessary to distinguish between the various perturbations $\omega_\nu$ and $\Omega_\nu$ of the frequencies, since only the size of the perturbation matters. Therefore, we write $\omega',\ \Omega'$ for all of them, and we have
$$|\omega'-\omega|,\ \|\Omega'-\Omega\|_{2\beta}\leq\alpha,\ \ |\omega'-\omega|^{\mathfrak{L}},\ \|\Omega'-\Omega\|^{\mathfrak{L}}_{2\beta}\leq \frac{1}{2L}.$$
Similarly, we write $\mathcal{R}_{kl}'$ rather than $\mathcal{R}_{kl}^\nu$ for the various resonance zones.
The proof of Theorem \ref{Meastheorem} requires a couple of lemmas.
\begin{lemma}\label{514}  For $l\in\Lambda=\{l:\ 1\leq |l|\leq2\}$,
$$\ln (1+\la l\ra)\geq \frac{1}{8}\|l\|_{2\beta}^{\frac{1}{2\beta}}\|l\|_{-2\beta}^{\frac{1}{2\beta}},$$
where $\|l\|_{\pm2\beta}=\sup_{j\geq1}|l_j|(1+\ln j)^{\pm2\beta}.$
\end{lemma}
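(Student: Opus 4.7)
The plan is a case analysis on the support of $l$. Since $1 \leq |l| \leq 2$, at most two coordinates of $l$ are nonzero (counted with multiplicity), and $l$ falls into one of two types: (i) $l = \pm e_j$ or $l = \pm 2 e_j$ for some $j \geq 1$ (single-index support), or (ii) $l = \epsilon_1 e_i + \epsilon_2 e_j$ with $i < j$ and $\epsilon_1,\epsilon_2 \in \{\pm 1\}$ (two distinct nonzero coordinates). In each case, writing $c := \|l\|_{2\beta}\|l\|_{-2\beta}$, the claim reduces to $8\ln(1+\langle l\rangle) \geq c^{1/(2\beta)}$.

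In case (i) both norms are attained at the same index $j$, so the weights $(1+\ln j)^{\pm 2\beta}$ cancel and $c = l_j^2 \leq 4$; hence $c^{1/(2\beta)} \leq 4^{1/(2\beta)} \leq 2$ for $\beta \geq 1$. Since $\langle l\rangle \geq 1$, we get $\ln(1+\langle l\rangle) \geq \ln 2 > 1/4$, which settles the single-index case.

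In case (ii) a direct computation gives $c^{1/(2\beta)} = (1+\ln j)/(1+\ln i)$, independent of $\beta$ and of the signs. The central scalar estimate to establish is
\begin{equation*}
\frac{1+\ln j}{1+\ln i} \;\leq\; 1 + \ln\bigl(1 + (j-i)\bigr),
\end{equation*}
which follows by writing $\ln j = \ln i + \ln(1 + (j-i)/i)$, using $i \geq 1$ to deduce $\ln(1+(j-i)/i) \leq \ln(1+(j-i))$ by monotonicity, and then dividing by $1+\ln i \geq 1$. Since $\langle l\rangle = i+j$ when $\epsilon_1\epsilon_2 = +1$ and $\langle l\rangle = j-i$ when $\epsilon_1\epsilon_2 = -1$, in both sub-cases $\langle l\rangle \geq j-i =: k \geq 1$. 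Combining, the lemma reduces to the one-variable inequality $8\ln(1+k) \geq 1 + \ln(1+k)$, i.e.\ $\ln(1+k) \geq 1/7$, which holds for every integer $k \geq 1$ since $\ln 2 > 1/7$.

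The main obstacle is precisely the sub-case of (ii) with $\epsilon_1 = -\epsilon_2$, where $\langle l\rangle = j-i$ can be very small even when $j$ is large and hence $(1+\ln j)/(1+\ln i)$ is not close to $1$; the additive decomposition of $(1+\ln j)/(1+\ln i)$ displayed above is tailored exactly to absorb this logarithmic gap and reduce matters to the uniform scalar bound $\ln(1+k)\geq 1/7$ for $k\geq 1$.
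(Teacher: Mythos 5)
Your proof is correct and takes a genuinely different route from the paper's argument in the crucial two-index case. The paper also reduces to estimating $(1+\ln j)/(1+\ln i)$ with $i<j$ the two indices in the support of $l$, but bounds this ratio crudely by $1+\ln(i+b)$ (where $b=j-i$), using only $1+\ln i\geq 1$, and then splits into the cases $b\geq 2e$ (with a further split $b\geq i$ versus $b\leq i$) and $1\leq b< 2e$, checking each separately with ad hoc numerical estimates. Your telescoping decomposition $\ln j=\ln i+\ln\bigl(1+(j-i)/i\bigr)$, followed by the two monotonicity steps that both exploit $i\geq 1$, yields the sharper and cleaner bound $(1+\ln j)/(1+\ln i)\leq 1+\ln(1+(j-i))$; since $\langle l\rangle\geq j-i$ in both sign patterns, this collapses the whole argument to the single one-variable inequality $7\ln(1+k)\geq 1$ for integers $k\geq 1$, with no case-splitting on the size of $b$. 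You also spell out the single-index case, which the paper dismisses with ``the proofs are similar''; note that your estimate $4^{1/(2\beta)}\leq 2$ there implicitly uses $\beta\geq 1$, which is not in the hypotheses of the lemma as stated but does hold throughout the paper since $\beta\geq 2(n+2)$. Overall your argument is tighter and more uniform than the one in the paper.
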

\begin{proof}
We only prove the most complicated case, i.e., $\displaystyle l=(\cdots,1,\cdots,-1,\cdots)$.
In other words, $l_i=1,\ l_j=-1$ with $i<j$. Set $b=\la l\ra=j-i$. \\
Case 1: $b\geq 2e$. Clearly, $$\|l\|_{2\beta}\|l\|_{-2\beta}=\Big(\frac{1+\ln j}{1+\ln i}\Big)^{2\beta}\leq (1+\ln (i+b))^{2\beta}.$$
If $b\geq i,$ then $\|l\|_{2\beta}\|l\|_{-2\beta}\leq 2^{2\beta}(\ln b)^{2\beta}.$
It follows $\ln \la l\ra\geq\frac12 \|l\|_{2\beta}^{\frac{1}{2\beta}}\|l\|_{-2\beta}^{\frac{1}{2\beta}}.$
If $ b\leq i$, it follows $i\leq j\leq2i.$ From a straightforward computation,
$$\|l\|_{2\beta}\|l\|_{-2\beta}\leq\Big(\frac{1+\ln 2i}{1+\ln i}\Big)^{2\beta}\leq 2^{2\beta}.$$
We obtain
$\ln \la l\ra\geq1\geq\frac12 \|l\|_{2\beta}^{\frac{1}{2\beta}}\|l\|_{-2\beta}^{\frac{1}{2\beta}}$.\\
Case 2: $1\leq b\leq 2e$. Similarly,
$\frac14\|l\|_{2\beta}^{\frac{1}{2\beta}}\|l\|_{-2\beta}^{\frac{1}{2\beta}}\leq 1$.
It follows
$$\ln (1+\la l\ra)\geq \ln 2\geq\frac12 \geq \frac18\|l\|_{2\beta}^{\frac{1}{2\beta}}\|l\|_{-2\beta}^{\frac{1}{2\beta}}.$$
\indent For other cases the proofs are similar.
\end{proof}

\begin{lemma}
If $\mathcal{R}_{kl}'\neq\phi$ and $k\neq0,\ l\in\Lambda$, then $|k|\geq c_3\la l\ra,$ where $0<\alpha\leq\min\{1,\frac{1}{2}|a_1|\},$ $c_3$ is a constant depending on $a_1,M,M_1$, where $a_1$ are defined in { Assumption $\mathcal{A}$}.\end{lemma}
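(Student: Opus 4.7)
My plan is to show that $|l\cdot\Omega'(\xi)|$ grows linearly in $\la l\ra$ once $\la l\ra$ is sufficiently large, and then exploit the definition of the resonance zone $\mathcal{R}'_{kl}$ to transfer this size to $|k\cdot\omega'(\xi)|$, which is bounded above by $2M|k|$. The three standing inputs I shall use are (i) $|\omega'|_\Pi\leq M+\alpha\leq 2M$; (ii) by Assumption $\mathcal{A}$(A2), $|\Omega_j-\overline{\Omega}_j|\leq M_1 j^{\delta}\leq M_1$ for all $j\geq 1$ (since $\delta<0$); and (iii) $\|\Omega'-\Omega\|_{2\beta}\leq\alpha$, yielding $|\Omega'_j-\Omega_j|\leq\alpha$ for every $j$.

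First, I would expand $l\cdot\overline{\Omega}=a_1\sum_j jl_j+a_2\sum_j l_j$. A brief case check on $\Lambda=\{l:1\leq|l|\leq 2\}$ (through the types $\pm q_j$, $\pm(q_i+q_j)$, $q_i-q_j$ with $i\neq j$, and $\pm 2q_j$) shows $\sum_j jl_j\neq 0$, hence $\la l\ra=|\sum_j jl_j|$. Since $|\sum_j l_j|\leq|l|\leq 2$, this gives $|l\cdot\overline{\Omega}|\geq|a_1|\la l\ra-2|a_2|$. Combining with (ii), (iii) and $|l|\leq 2$,
\begin{equation*}
|l\cdot\Omega'(\xi)|\geq|a_1|\la l\ra-\bigl(2|a_2|+2M_1+2\alpha\bigr),
\end{equation*}
and hence there is an explicit constant $C_0=C_0(a_1,a_2,M_1)$ such that
\begin{equation*}
\la l\ra\geq C_0\Longrightarrow|l\cdot\Omega'(\xi)|\geq\tfrac{3}{4}|a_1|\la l\ra\quad\text{uniformly in }\xi\in\Pi.
\end{equation*}

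Second, I would split on the size of $\la l\ra$. If $\la l\ra\leq C_0$, then since $k\neq 0$ forces $|k|\geq 1$, the inequality $|k|\geq c_3\la l\ra$ is immediate as soon as $c_3\leq 1/C_0$. If $\la l\ra>C_0$ and $\mathcal{R}'_{kl}\neq\emptyset$, pick any $\xi\in\mathcal{R}'_{kl}$; using $A_k\geq 1$ together with $\alpha\leq|a_1|/2$,
\begin{equation*}
|k\cdot\omega'(\xi)|\geq|l\cdot\Omega'(\xi)|-\frac{\la l\ra\alpha}{A_k}\geq\tfrac{3}{4}|a_1|\la l\ra-\tfrac{1}{2}|a_1|\la l\ra=\tfrac{1}{4}|a_1|\la l\ra,
\end{equation*}
which, combined with $|k\cdot\omega'(\xi)|\leq 2M|k|$, gives $|k|\geq\frac{|a_1|}{8M}\la l\ra$. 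Setting $c_3:=\min\{1/C_0,\,|a_1|/(8M)\}$ completes the argument.

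The main (and essentially only) technical point is the verification that $l\cdot\overline{\Omega}$ grows linearly in $\la l\ra$ uniformly on $\Lambda$, which rests on the elementary case check that $\sum_j jl_j\neq 0$ for every $l\in\Lambda$. Everything after that is triangle-inequality bookkeeping with the standing hypotheses on $\omega'$ and $\Omega'$.
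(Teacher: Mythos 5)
Your proof is correct and follows essentially the same route as the paper: lower-bound $|l\cdot\Omega'|$ by $|a_1|\la l\ra$ minus a uniformly bounded error (coming from $a_2\sum_j l_j$, from $\Omega-\overline{\Omega}$ via (A2), and from $\Omega'-\Omega$), then use the resonance-zone inequality together with $|k\cdot\omega'|\preceq M|k|$ and $A_k\geq 1$ to conclude. Two harmless cosmetic points: your $C_0$, hence $c_3$, picks up a dependence on $a_2$ because you treat every $l\in\Lambda$ uniformly, whereas the paper writes out only the case $l=q_i-q_j$ (where the $a_2$ term cancels in $\overline{\Omega}_i-\overline{\Omega}_j$) and waves at the rest; and the bound $|\omega'|_\Pi\leq M+\alpha\leq 2M$ tacitly uses $\alpha\leq M$, which is not among the stated hypotheses---writing $M+1$ in place of $2M$ (using only $\alpha\leq 1$) removes that assumption without changing the argument.
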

\begin{proof}
Case 1: $\displaystyle l=(\cdots,1,\cdots,-1,\cdots).$ In other words, $l_i=1,\ l_j=-1,\ i<j.$
\begin{eqnarray*}
\la k,\omega'\ra+\la l,\Omega'\ra&=&\underbrace{(\overline{\Omega}_i-\overline{\Omega}_j)}_{I_1}+\underbrace{( {\Omega}_i-\overline{\Omega}_i)-( {\Omega}_j-\overline{\Omega}_j)}_{I_2}
+\underbrace{\la l,\Omega'-\Omega\ra}_{I_3}+\underbrace{\la k,\omega'\ra}_{I_4},
\end{eqnarray*}
where
$|I_1|\geq|a_1||i-j|=|a_1|\la l\ra$ and
$|I_2|\leq 2 M_1$(note $\delta<0$).
From $\|\Omega'-\Omega\|_{2\beta}\leq\alpha$, it follows
$|I_3|= |\la l,\Omega'-\Omega\ra|\leq \alpha(1+\ln i)^{-2\beta}+\alpha(1+\ln j)^{-2\beta}\leq2\alpha$.
On the other hand,  $|I_4|\leq|k|(\alpha+M)$. Thus,
$|I_2+I_3+I_4|\leq2M_1+2\alpha+|k|(\alpha+M)$.
If $\mathcal{R}_{kl}'\neq\phi$, then there exists $\xi\in\Pi$ so that
 $$|\la k,\omega'(\xi)\ra +\la l,\Omega'(\xi)\ra|< \frac{\alpha\la l\ra}{A_k}.$$
Thus,
\begin{eqnarray*}
\frac{\alpha \la l\ra}{A_k}&>&|\la k,\omega'\ra+\la l,\Omega'\ra|\\
&\geq&|I_1|-|I_2+I_3+I_4|\\
&\geq&|a_1|\la l\ra-(2M_1+2\alpha+|k|(\alpha+M)).
\end{eqnarray*}
If $0<\alpha\leq \min\{1,\frac{1}{2}|a_1|\}$, then
$\la l\ra\leq\frac{2|k|(2M_1+M+3)}{|a_1|}$
or $|k|\geq\frac{|a_1|}{2(2M_1+M+3)}\la l\ra : =  \la l\ra c_3$.
For other cases the proofs are similar.
\end{proof}
From \cite{Pos}, we have
\begin{lemma}
If $|k|\geq8LM\|l\|_{-2\beta}$, then
 {$$Meas(\mathcal{R}'_{kl}(\alpha))\leq \frac{\alpha c_4}{A_k},$$
where $c_4=C_nL^nM^{n-1}\rho^{n-1}c_3^{-1}$ with $\rho=diam (\Pi).$}
\end{lemma}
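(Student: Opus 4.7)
My plan is to show that $f_{kl}(\xi) := \langle k,\omega'(\xi)\rangle+\langle l,\Omega'(\xi)\rangle$ has a linear part in $\xi$ whose derivative dominates the perturbations, then apply Fubini. Because the unperturbed map $\xi\mapsto\omega(\xi)$ is a bi-Lipschitz homeomorphism (Assumption $\mathcal{A}$(A1)) with $|\omega^{-1}|_\Pi^{\mathfrak{L}}\leq L$, I will push the whole problem forward to the $v$-variable via $v=\omega(\xi)$. In these coordinates,
\begin{equation*}
g(v):=f_{kl}(\omega^{-1}(v))=\langle k,v\rangle+\langle k,(\omega'-\omega)(\omega^{-1}(v))\rangle+\langle l,\Omega'(\omega^{-1}(v))\rangle,
\end{equation*}
and the linear term $\langle k,v\rangle$ has derivative exactly $|k|$ in the unit direction $k/|k|$. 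The measure bound will come from showing the remaining two terms are Lipschitz in $v$ with constants summing to $\le (1-c)|k|$ for some $c>0$.

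The first remaining term is handled immediately: $|\omega'-\omega|_\Pi^{\mathfrak{L}}\le \frac{1}{2L}$ combined with $|\omega^{-1}|^{\mathfrak L}\le L$ gives Lipschitz constant $|k|/2$ in $v$. The delicate term is the $\Omega'$ one. I will write $\Omega'_j = (\Omega'_j-\Omega_j)+\Omega_j$ and use $\|\Omega'-\Omega\|_{2\beta}^{\mathfrak L}\le\frac{1}{2L}$ together with $\|\Omega\|_{2\beta,\Pi}^{\mathfrak L}\le M$ from (\ref{definitionM}) to get
\begin{equation*}
|\Omega'_j(\xi)-\Omega'_j(\eta)|\le \frac{M+1/(2L)}{(1+\ln j)^{2\beta}}|\xi-\eta|.
\end{equation*}
Since $|l|\le 2$, the sum $\sum_j |l_j|(1+\ln j)^{-2\beta}$ has at most two nonzero terms, hence is bounded by $2\|l\|_{-2\beta}$. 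Combining with the change of variables gives Lipschitz constant $\le 3ML\|l\|_{-2\beta}$ in $v$ (assuming the mild $ML\ge 1$). Under the hypothesis $|k|\ge 8LM\|l\|_{-2\beta}$ this is at most $3|k|/8$, so $g$ restricted to any line in the direction $k/|k|$ has slope bounded below by $|k|-|k|/2-3|k|/8=|k|/8$ and is therefore monotone.

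A one-dimensional slab estimate then gives, on any such line, length of the set $\{|g|<\alpha\langle l\rangle/A_k\}$ at most $16\alpha\langle l\rangle/(|k|A_k)$. The transverse cross-section of $\omega(\Pi)$ has $(n{-}1)$-dimensional volume at most $C_n(M\rho)^{n-1}$ (since $\omega$ is $M$-Lipschitz and $\Pi$ has diameter $\rho$). Fubini in $v$, followed by $|{\det}D\omega^{-1}|\le L^n$ to return to $\xi$, produces
\begin{equation*}
\text{Meas}(\mathcal{R}'_{kl}(\alpha))\;\le\;L^n\cdot C_n(M\rho)^{n-1}\cdot \frac{16\alpha\langle l\rangle}{|k|A_k}.
\end{equation*}
Finally, I invoke the preceding lemma, which states $|k|\ge c_3\langle l\rangle$ whenever $\mathcal{R}'_{kl}(\alpha)\neq\emptyset$, to cancel $\langle l\rangle$ against $|k|$ and collect constants into $c_4=C_nL^nM^{n-1}\rho^{n-1}c_3^{-1}$, yielding the claimed bound $\alpha c_4/A_k$.

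The main obstacle is isolating the $\Omega'$-Lipschitz contribution cleanly; this is where the threshold $|k|\ge 8LM\|l\|_{-2\beta}$ becomes essential and where the weight $(1+\ln j)^{2\beta}$ in the definition of $\|\cdot\|_{2\beta,\Pi}^{\mathfrak L}$ is precisely matched to the norm $\|l\|_{-2\beta}$ used to measure the external-mode index $l$. Once that estimate is in place, the rest is a routine Fubini argument combined with the bi-Lipschitz change of variables supplied by Assumption $\mathcal{A}$.
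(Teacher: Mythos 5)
Your proposal is correct and reproduces the proof of the lemma the paper inherits from P\"oschel \cite{Pos} (the paper itself only writes ``From \cite{Pos}, we have''). The only cosmetic difference is that you change variables with the unperturbed map $v=\omega(\xi)$ and treat $\omega'-\omega$ as an extra Lipschitz perturbation, whereas P\"oschel uses $\zeta=\omega'(\xi)$ directly (proving its bi-Lipschitz invertibility from $|\omega^{-1}|^{\mathfrak L}\le L$ and $|\omega'-\omega|^{\mathfrak L}\le\frac{1}{2L}$); the budgets come out the same, and your ``mild'' assumption $ML\ge 1$ is in fact automatic from $\omega^{-1}\circ\omega=\mathrm{id}$, so it costs nothing.
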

Similar as \cite{Pos}, let
\begin{eqnarray}\label{lkstar}
L_*=\frac{LM}{c_3C_\beta},\ \ K_*=8LM\max_{\|l\|_{2\beta}\leq L_*}\|l\|_{-2\beta}+c(\tau,\iota,\delta)
\end{eqnarray}
with $c(\tau,\iota,\delta)$ defined in (\ref{ctautdef}) below,
we have
\begin{lemma}\label{rrrr}
If $|k|\geq K_*$ or $\|l\|_{2\beta}\geq L_*,\ l\in\Lambda,$ then for $k\neq0,$
 {
$$Meas(\mathcal{R}'_{kl}(\alpha))\leq \frac{\alpha c_4}{A_k}.$$}
\end{lemma}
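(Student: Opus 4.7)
\textbf{The plan} is to derive Lemma \ref{rrrr} from the preceding measure lemma (valid under $|k|\ge 8LM\|l\|_{-2\beta}$) by exploiting the calibration of the constants $L_\ast,K_\ast$ in \eqref{lkstar} together with Lemma \ref{514}. I split the verification into two overlapping cases.

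In Case 1, $|k|\ge K_\ast$: if in addition $\|l\|_{2\beta}\le L_\ast$, the bound $\|l\|_{-2\beta}\le\max_{\|l'\|_{2\beta}\le L_\ast}\|l'\|_{-2\beta}$ combined with the definition of $K_\ast$ yields $|k|\ge 8LM\|l\|_{-2\beta}$, and the preceding lemma gives the estimate directly. The sub-case $\|l\|_{2\beta}>L_\ast$ falls into Case 2.

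In Case 2, $\|l\|_{2\beta}\ge L_\ast$: I may assume $\mathcal{R}'_{kl}(\alpha)\neq\emptyset$ (otherwise nothing to prove), so that the second lemma of this section yields $|k|\ge c_3\la l\ra$. Feeding Lemma \ref{514} into this bound produces
\[
|k|\ge c_3\Bigl[\exp\!\bigl(\tfrac{1}{8}L_\ast^{1/(2\beta)}\|l\|_{-2\beta}^{1/(2\beta)}\bigr)-1\Bigr],
\]
and the choice $L_\ast=LM/(c_3 C_\beta)$ is designed so that the right-hand side dominates the linear target $8LM\|l\|_{-2\beta}$, returning us to the hypothesis of the preceding lemma. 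The residual sub-regime where Lemma \ref{514} gives no improvement --- typically $l=\pm(e_i-e_{i+b})$ with $b$ bounded but $i$ large --- will be treated by a direct Lipschitz-slicing argument in the $\omega$-coordinates: since $|\la l,\Omega'\ra|^{\mathfrak L}\preceq\|l\|_{-2\beta}$ is then very small, $\la l,\Omega'\ra$ is essentially constant in $\xi$, while $\la k,\omega'\ra$ sweeps with slope $\ge|k|/(2L)\ge c_3 b/(2L)$, giving measure $\preceq\alpha c_4/A_k$.

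The hard part will be verifying that $L_\ast$ and the additive constant $c(\tau,\iota,\delta)$ in \eqref{lkstar} can be chosen so that the exponential lower bound from Lemma \ref{514} overrides the linear term $8LM\|l\|_{-2\beta}$ uniformly over $l\in\Lambda$ with $\|l\|_{2\beta}\ge L_\ast$. Because $|l|\le2$ restricts the support to at most two indices, the residual cases reduce to the two-index families $e_i\pm e_j$, for which a direct computation based on $\overline{\Omega}_j=a_1 j+a_2$ and the bounds in Assumption $\mathcal{A}$ either promotes $|k|$ above $8LM\|l\|_{-2\beta}$ or forces $\mathcal{R}'_{kl}(\alpha)$ to be empty; the constant $c(\tau,\iota,\delta)$ is tuned to absorb the $O(1)$ losses coming from the constants $a_1,a_2,M_1$ in Assumption~$\mathcal{A}$, after which both cases close with the same $c_4$.
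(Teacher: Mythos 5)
Your overall strategy matches the paper's (which compresses the whole argument into the single sentence ``by Lemma~\ref{514} and a straightforward computation''): you split into the overlapping regimes $|k|\ge K_*$ and $\|l\|_{2\beta}\ge L_*$, reduce the first to the preceding measure lemma directly from the definition of $K_*$, and for the second you feed the non-emptiness bound $|k|\ge c_3\langle l\rangle$ together with Lemma~\ref{514} into the hypothesis $|k|\ge 8LM\|l\|_{-2\beta}$ of the preceding lemma, with $L_*$ calibrated to make this close. That is exactly the intended route.

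Two of your side remarks are off, although neither creates a real gap. First, the ``residual sub-regime'' $l=\pm(e_i-e_{i+b})$ with $b$ bounded and $i$ large does not call for a separate Lipschitz-slicing argument and is not a genuine defect of the Lemma~\ref{514} route: in that regime $\|l\|_{-2\beta}=(1+\ln i)^{-2\beta}$, and since $(1+\ln(i+b))^{2\beta}=\|l\|_{2\beta}\ge L_*$ forces $(1+\ln i)^{2\beta}\ge L_*/2^{2\beta}$, the target $8LM\|l\|_{-2\beta}\le 8\cdot 2^{2\beta}LM/L_*=8\cdot 2^{2\beta}c_3 C_\beta$ is already below $c_3\le c_3 b\le|k|$ once $C_\beta$ is chosen small enough in the formula $L_*=LM/(c_3 C_\beta)$ --- this is part of the same ``straightforward computation,'' not a separate case requiring a different mechanism. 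Second, the additive constant $c(\tau,\iota,\delta)$ in \eqref{lkstar} is not there to absorb $O(1)$ losses from $a_1,a_2,M_1$ (those are already absorbed into $c_3$ through the preceding $|k|\ge c_3\langle l\rangle$ lemma); by \eqref{ctautdef} it is a factorial-type threshold used later, in the proof of Lemma~\ref{518}, to guarantee that $A_k=e^{|k|^{\tau/\beta}}$ dominates the polynomial powers $|k|^{\tau+1}$ appearing there, and it plays no role in the present lemma.
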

\begin{proof}
The proof is followed by Lemma \ref{514} and a straightforward computation.
\end{proof}
\begin{remark}\label{0melnicov}
The same holds for $k\neq0,l=0.$
\end{remark}
Next we consider the ``resonance classes" $\mathcal{R}'_{k}(\alpha)=\bigcup_{l\in\Lambda}^*\mathcal{R}'_{kl}(\alpha)$, where the star indicates that we exclude the finitely many resonance zones with $0\leq|k|<K_*$ and $0<\|l\|_{2\beta}<L_*.$ Without loss of generality we suppose $-1\leq \delta<0$. If $\delta<-1$, then we set $\delta=-1$.
 {\begin{lemma}\label{518}
$\d Meas(\mathcal{R}'_{k}(\alpha))<\frac{c_5\alpha^\mu}{|k|^{\tau-1}}$, where $\mu=\frac{\delta}{\delta-1}.$
\end{lemma}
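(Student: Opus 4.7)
The plan is to bound $Meas(\mathcal R'_k(\alpha))$ by decomposing the starred union $\bigcup_{l\in\Lambda}^*\mathcal R'_{kl}(\alpha)$ according to the combinatorial type of $l$, applying Lemma \ref{rrrr} to the individual zones, and then exploiting the structure (A2) of the normal frequencies $\Omega_j(\xi)=a_1 j+a_2+O(j^{\delta})$ with $\delta<0$ to sum the contributions of the infinitely many ``opposite-sign'' indices.

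First I would restrict the union to $l\in\Lambda$ with $\la l\ra\le|k|/c_3$, since the lemma preceding Lemma \ref{rrrr} shows that zones with larger $\la l\ra$ are empty. Next split $l$ into three types: (I) $l=\pm q_j$; (II) $l=\pm(q_i+q_j)$, $i\le j$; (III) $l=q_i-q_j$, $i<j$. For types (I) and (II) the admissible indices number $O(|k|)$ and $O(|k|^2)$ respectively, and every zone in the starred union satisfies $Meas(\mathcal R'_{kl}(\alpha))\le c_4\alpha/A_k$ by Lemma \ref{rrrr}. Because $A_k^{-1}=\exp(-|k|^{\tau/\beta})$ decays super-polynomially, the total contribution $\lesssim|k|^2\alpha/A_k$ is already far smaller than the claimed bound for $|k|\ge K_*$.

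The main work is type (III). Fix $b\in\{1,\dots,\lfloor|k|/c_3\rfloor\}$ and consider $l=q_i-q_{i+b}$ for $i\ge 1$. Using (A2) and $\|\Omega'-\Omega\|_{2\beta}\le\alpha$, decompose
\begin{eqnarray*}
\la l,\Omega'(\xi)\ra + a_1 b = E_1(i,b,\xi) + E_2(i,b,\xi),
\end{eqnarray*}
where $E_1=(\Omega_i-\bar\Omega_i)-(\Omega_{i+b}-\bar\Omega_{i+b})$ satisfies $|E_1|\le 2M_1 i^{\delta}$ by (A2), and $E_2=(\Omega'_i-\Omega_i)-(\Omega'_{i+b}-\Omega_{i+b})$ satisfies $|E_2|\le 2\alpha$. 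Thus for any threshold $I\ge 1$ and every $i\ge I$,
\begin{eqnarray*}
\mathcal R'_{k,q_i-q_{i+b}}(\alpha)\subset\Big\{\xi:\ |\la k,\omega'(\xi)\ra-a_1 b|<\frac{b\alpha}{A_k}+2M_1 I^{\delta}+2\alpha\Big\}.
\end{eqnarray*}
By the Lipschitz invertibility of $\omega$ (Assumption (A1)) together with $|\omega'-\omega|^{\mathfrak L}\le 1/(2L)$, the measure of the right-hand side is $\lesssim L\rho^{n-1}(b\alpha/A_k+I^{\delta}+\alpha)/|k|$. For the remaining $i<I$, each of the at most $I$ zones is bounded by $c_4\alpha/A_k$ (Lemma \ref{rrrr}), contributing $\le I\cdot c_4\alpha/A_k$ in total.

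Finally, I would optimize $I$ to balance the two free terms $I^{\delta}/|k|$ and $I\alpha/A_k$ by choosing $I\sim(A_k/(|k|\alpha))^{1/(1-\delta)}$; both terms then equal $\alpha^{\mu}|k|^{\mu-1}/A_k^{\mu}$ (up to constants) with $\mu=\delta/(\delta-1)$, while $b\alpha/(|k|A_k)$ and $\alpha/|k|$ are of strictly lower order for $|k|\ge K_*$. Summing over $b\le|k|/c_3$ multiplies by $|k|$, yielding a total type-III bound $\lesssim|k|^{\mu}\alpha^{\mu}/A_k^{\mu}$. Since $A_k^{\mu}=\exp(\mu|k|^{\tau/\beta})$ dominates every polynomial in $|k|$, and the constant $c(\tau,\iota,\delta)$ in (\ref{lkstar}) can be fixed so that $|k|^{\mu+\tau-1}\le A_k^{\mu}$ holds for $|k|\ge K_*$, the overall bound becomes $\le c_5\alpha^{\mu}/|k|^{\tau-1}$, as claimed. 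The main obstacle is precisely this type-III sum: Lemma \ref{rrrr} alone cannot be iterated over the infinitely many $l=q_i-q_{i+b}$, so the decay (A2) must be used to collapse the high-index zones into a single thickened strip, and the exponent $\mu$ emerges from the optimal balance $I^{\delta-1}\sim|k|\alpha/A_k$.
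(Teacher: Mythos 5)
Your decomposition into types (I), (II), (III), the restriction to $\la l\ra\le|k|/c_3$, and the splitting at a threshold index followed by a ``thickened-strip'' estimate for large indices all mirror the paper's argument (which splits $\Lambda=\Lambda^+\cup\Lambda^-$ and works at a threshold $j_0$). However, there is a genuine gap in the type-(III) estimate. You bound $E_2=(\Omega'_i-\Omega_i)-(\Omega'_{i+b}-\Omega_{i+b})$ by $|E_2|\le 2\alpha$, discarding the weight built into $\|\Omega'-\Omega\|_{2\beta}\le\alpha$; the correct bound is $|E_2|\le 2\alpha(1+\ln i)^{-2\beta}$. With your crude bound, the thickened strip for each fixed $b$ contributes a term $\preceq\alpha/|k|$ that does \emph{not} depend on the threshold $I$; summing over the $\approx|k|/c_3$ values of $b$ yields a contribution $\preceq\alpha$. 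This is not $O(\alpha^{\mu}/|k|^{\tau-1})$: as $|k|\to\infty$ with $\alpha$ fixed, $\alpha^{\mu}/|k|^{\tau-1}\to 0$ while $\alpha$ does not, so your assertion that ``$\alpha/|k|$ is of strictly lower order for $|k|\ge K_*$'' is false, and the argument as written cannot reach the claimed bound.

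The repair, and what the paper actually does, is to carry the logarithmic weight: for $i\ge I$ (or $j\ge j_0$ in the paper's notation), the $E_2$ contribution is $\preceq\alpha(1+\ln I)^{-2\beta}/|k|$. The paper chooses $j_0=\max\{\exp(|k|^{(\tau-1)/(2\beta)}),\ \alpha^{\gamma/\delta}|k|^{(1-\tau)/\delta}\}$ — a maximum, not a balance — so that simultaneously $(1+\ln j_0)^{-2\beta}\le|k|^{-(\tau-1)}$ and $j_0^{\delta}\le\alpha^{\gamma}|k|^{1-\tau}$, while $j_0\alpha/A_k$ is harmless because $A_k=\exp(|k|^{\tau/\beta})$ dominates; the exponent $\gamma=\mu=\delta/(\delta-1)$ emerges from requiring the ``small-index'' term $\alpha j_0/A_k\preceq\alpha^{1+\gamma/\delta}/|k|^{\tau}$ to match $\alpha^{\gamma}/|k|^{\tau}$. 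Your balance $I\sim(A_k/(|k|\alpha))^{1/(1-\delta)}$ would in fact also work once the weight is restored, since then $\ln I\gtrsim|k|^{\tau/\beta}$ gives $(1+\ln I)^{-2\beta}\preceq|k|^{-2\tau}$, which is more than enough. So the gap is a missing use of the weighted norm, not a wrong overall strategy; but as written the proposal does not prove the lemma.
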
}
\begin{proof}
Write $\Lambda=\Lambda^+\bigcup\Lambda^-$, where $\Lambda^-$ contains those $l\in\Lambda$   with two nonzero components of the opposite sign, and  $\Lambda^+$ contains the rest. It is easy to obtain
$Meas(\bigcup_{l\in\Lambda^+}^*\mathcal{R}'_{kl}(\alpha))\leq \frac{c_6|k|^2\alpha}{A_k}$.\\
\indent Now we turn to the minus case.  For $l\in\Lambda^-$, we have $\la l,\Omega'\ra=\Omega'_i-\Omega'_j$ and $\la l\ra=|i-j|,$ and up to an irrelevant sign, $l$ is uniquely determined by  two integers $i\neq j.$ We may suppose that $i-j=b>0.$ Then, for $|k|\geq K_*\geq([(\tau+1)\iota]+1)!$,
\begin{eqnarray*}
|\la k,\omega'\ra+a_1b|&<&\frac{\alpha b}{A_k}+2\alpha(1+\ln j)^{-2\beta}+2M_1j^\delta\\
&\leq &\frac{\alpha b}{|k|^\tau}+2\alpha(1+\ln j)^{-2\beta}+2M_1j^\delta.
\end{eqnarray*}
Therefore,
\begin{eqnarray*}
\mathcal{R}'_{kij}(\alpha)&=&\Big\{\xi:\ |\la k,\omega'(\xi)\ra +\la l,\Omega'(\xi)\ra|< \frac{\alpha b}{A_k}\Big\}\\
&\subset&Q_{kbj}:= \Big\{\xi:\ |\la k,\omega'(\xi)\ra +a_1b\ra|< \frac{\alpha b}{|k|^\tau}+2\alpha(1+\ln j)^{-2\beta}+2M_1j^\delta\Big\}.
\end{eqnarray*}
Moreover, $Q_{kbj}\subset Q_{kbj_0}$ for $j\geq j_0.$ For fixed $b\leq c_3^{-1}|k|$,  we obtain
\begin{eqnarray}\label{rrr}
\nonumber Meas(\bigcup_{i-j=b}^*\mathcal{R}'_{kij}(\alpha))
&\leq&\sum_{j<j_0}Meas(\mathcal{R}'_{kij}(\alpha))+ Meas(Q_{kbj_0})\\
&\leq&\frac{c_4\alpha j_0}{A_k}+c_6\Big(\frac{\alpha b}{|k|^{\tau+1}}+\frac{2\alpha(1+\ln j_0)^{-2\beta}}{|k|}+\frac{2M_1j_0^\delta}{|k|}\Big).
\end{eqnarray}
Choose $j_0=\max\{\exp(|k|^{\frac{\tau-1}{2\beta}}),\ \alpha^\frac{\gamma}{\delta}|k|^{\frac{1-\tau}{\delta}}\}$, where $\gamma$ will be fixed in the end. By computation, if choose
\begin{eqnarray}\label{ctautdef}
|k|\geq K_*\geq c(\tau,\iota,\delta) : =\max\Big\{2^{2\iota},\big([\iota(\tau+1+\frac{1-\tau}{\delta})]+1\big)!\Big\},
\end{eqnarray}
then
$$(\ref{rrr})\leq c_7\Big(\frac{\alpha^{1+\frac{\gamma}{\delta}}}{|k|^\tau}+\frac{\alpha }{|k|^\tau}+\frac{\alpha^{ \gamma}}{|k|^\tau}\Big).$$
Note $0<-\delta\leq1$, if choose $\gamma=\frac{\delta}{\delta-1},$ then
$(\ref{rrr})\leq c_8\frac{\alpha^{\mu} }{|k|^\tau}$. Summing over $b$,
$Meas(\bigcup_{l\in\Lambda^-}^*\mathcal{R}'_{kl}(\alpha))\leq c_9\frac{\alpha^{\mu} }{|k|^{\tau-1}}$.
Thus
$$Meas(\bigcup_{l\in\Lambda}^*\mathcal{R}'_{kl}(\alpha))\leq \frac{c_6|k|^2\alpha}{A_k}+c_9\frac{\alpha^{\mu} }{|k|^{\tau-1}}\leq c_{5}\frac{\alpha^{\mu} }{|k|^{\tau-1}}.$$
\end{proof}
From Remark  \ref{0melnicov}, if $k\neq0,\ l_0=0$, $Meas(\mathcal{R}'_{kl_0}(\alpha))\leq \frac{c_4\alpha}{A_k}$, where
we define $\mathcal{R}''_{k}(\alpha)=\bigcup\limits_{|k|\geq K_*}\mathcal{R}'_{kl_0}(\alpha)$. Note the choice of $K_*$ and $|k|\geq K_*$, we deduce that for $l_0=0$,
$Meas(\mathcal{R}'_{kl_0}(\alpha))\leq \frac{c_4\alpha}{|k|^{\tau-1}}\leq  \frac{c_4\alpha^\mu}{|k|^{\tau-1}}$. Thus we have
\begin{lemma}For $|k|\geq K_*$,
$Meas(\mathcal{R}'_{k}(\alpha)\cup \mathcal{R}''_{k}(\alpha))\leq \frac{c_{10}\alpha^\mu}{|k|^{\tau-1}}$.
\end{lemma}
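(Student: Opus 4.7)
The plan is to derive this bound as a direct consequence of the two preceding estimates by sub-additivity of Lebesgue measure. Concretely, I would write
\begin{equation*}
Meas\bigl(\mathcal{R}'_{k}(\alpha)\cup \mathcal{R}''_{k}(\alpha)\bigr)\leq Meas\bigl(\mathcal{R}'_{k}(\alpha)\bigr)+Meas\bigl(\mathcal{R}''_{k}(\alpha)\bigr),
\end{equation*}
and then feed in Lemma \ref{518}, which supplies $Meas(\mathcal{R}'_{k}(\alpha))<\frac{c_5\alpha^\mu}{|k|^{\tau-1}}$, together with the estimate established immediately above the present lemma for the $l_0=0$ component. For that second piece I would invoke Remark \ref{0melnicov} to get $Meas(\mathcal{R}'_{kl_0}(\alpha))\leq \frac{c_4\alpha}{A_k}$, and then use that $|k|\geq K_*$ is large enough so that $A_k=e^{|k|^{\tau/\beta}}$ dominates $|k|^{\tau-1}$; combined with $0<\mu\leq 1$ and $0<\alpha\leq 1$, this yields the cruder but uniformly compatible bound $\frac{c_4\alpha^\mu}{|k|^{\tau-1}}$.

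Setting $c_{10}:=c_4+c_5$ then gives the desired inequality. The only small point to verify is the inequality $\frac{c_4\alpha}{A_k}\leq \frac{c_4\alpha^\mu}{|k|^{\tau-1}}$ for $|k|\geq K_*$: this follows from $\alpha\leq \alpha^\mu$ (since $\mu\leq 1$ and $\alpha\leq 1$) and $|k|^{\tau-1}\leq A_k=e^{|k|^{\tau/\beta}}$, the latter being automatic once $|k|\geq K_*$ in view of the explicit lower bound on $K_*$ in \eqref{lkstar} and \eqref{ctautdef}.

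There is no real obstacle here; the lemma is a bookkeeping corollary packaging the two separately established bounds into one uniform statement for later use in summing over $|k|\geq K_*$ when bounding the total excised measure $\sum_{|k|\geq K_*} Meas(\mathcal{R}'_{k}(\alpha)\cup \mathcal{R}''_{k}(\alpha))$, which converges because $\tau\geq n+2$.
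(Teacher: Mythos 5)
Your proposal is correct and matches the paper's reasoning exactly: the paper does not give a separate proof, but the preceding paragraph already establishes $Meas(\mathcal{R}'_{kl_0}(\alpha))\leq \frac{c_4\alpha}{A_k}\leq \frac{c_4\alpha^\mu}{|k|^{\tau-1}}$ for $|k|\geq K_*$ via Remark \ref{0melnicov} and the choice of $K_*$, and Lemma \ref{518} supplies $Meas(\mathcal{R}'_k(\alpha))<\frac{c_5\alpha^\mu}{|k|^{\tau-1}}$, so the lemma follows by subadditivity with $c_{10}=c_4+c_5$, which is precisely your argument.
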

\begin{lemma}\label{520}
There exists a finite subset $\mathcal{X}_1\subset\mathcal{Z}$ and a constant $\tilde{c}_1$ such that
$$Meas(\bigcup_{(k,l)\notin\mathcal{X}_1}\mathcal{R}_{kl}^\nu(\alpha))\leq\tilde{c}_1\rho^{n-1}\alpha^\mu$$
for all sufficiently small $\alpha$.  The constant $\tilde{c}_1$ and the index set $\mathcal{X}_1$ are monotone functions of the domain $\Pi$: they do not increase for closed subsets of $\Pi$. In particular,
$$\mathcal{X}_1\subset\{(k,l):\ 0\leq|k|<\widetilde{K}_{*} : = 16LM+c(\tau,\iota,\delta),\ 0< \|l\|_{2\beta}\leq L_*, l\in \Lambda\}.$$
\end{lemma}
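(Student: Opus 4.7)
The plan is to take
$\mathcal{X}_1:=\{(k,l)\in\mathcal{Z}: 0\leq |k|<\widetilde{K}_*,\ l\in\Lambda,\ 0<\|l\|_{2\beta}\leq L_*\}$,
which is a finite set: the bound $\|l\|_{2\beta}\leq L_*$ together with $l\in\Lambda$ confines the support of $l$ to indices $\leq\exp(L_*^{1/(2\beta)})$, leaving only finitely many admissible $l$, and $|k|<\widetilde{K}_*$ cuts out finitely many $k$'s. I would split the complement $\mathcal{Z}\setminus\mathcal{X}_1$ according to which clause of the membership condition fails: (i) $|k|\geq\widetilde{K}_*$, with $l$ arbitrary; (ii) $|k|<\widetilde{K}_*$ and $l=0$ (so $k\neq 0$); (iii) $|k|<\widetilde{K}_*$ and $\|l\|_{2\beta}>L_*$, and bound the measure of the three partial unions separately.

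Case (i) is immediate from the lemma preceding the current one. A direct comparison of the definitions gives $\widetilde{K}_*\geq K_*$, since $\max_{\|l\|_{2\beta}\leq L_*,\,l\in\Lambda}\|l\|_{-2\beta}\leq 2$; hence the estimate
$Meas(\mathcal{R}_k'(\alpha)\cup\mathcal{R}_k''(\alpha))\leq c_{10}\alpha^\mu/|k|^{\tau-1}$,
whose constant already absorbs the geometric factor $\rho^{n-1}$ inherited from $c_4$ in Lemma~\ref{rrrr}, applies to every $|k|\geq\widetilde{K}_*$. Summing over $\Z^n$ and using $\tau\geq n+2$ makes the series $\sum_k |k|^{-(\tau-1)}$ convergent, yielding a total contribution of order $\rho^{n-1}\alpha^\mu$.

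Case (ii) reduces to the finitely many pairs $(k,0)$ with $0<|k|<\widetilde{K}_*$. The bi-Lipschitz property of $\omega_\nu$, which follows from Assumption~(A1) together with $|\omega_\nu-\omega|^{\mathfrak L}\leq 1/(2L)$, gives $Meas(\mathcal{R}_{k,0}^\nu(\alpha))\preceq \rho^{n-1}\alpha/(|k|A_k)$ by the argument of Remark~\ref{0melnicov}, and the finite sum is $O(\rho^{n-1}\alpha)\preceq\rho^{n-1}\alpha^\mu$ since $\mu\in(0,1)$. Case (iii) is the main obstacle. For fixed $|k|<\widetilde{K}_*$ the non-emptiness requirement $|k|\geq c_3\langle l\rangle$ bounds $\langle l\rangle$, so the class $l\in\Lambda^+$ collapses to finitely many terms handled directly by Lemma~\ref{rrrr}; the genuine difficulty is the sign-opposite family $l=q_i-q_j\in\Lambda^-$, where $|i-j|$ is bounded but $j$ may be arbitrarily large. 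Here I would reuse verbatim the chain-monotonicity trick from the proof of Lemma~\ref{518}: the constraint $\|l\|_{2\beta}>L_*$ forces $j>j_0:=\exp(L_*^{1/(2\beta)})$, so for each admissible $b=|i-j|$ the entire tail $\bigcup_{j\geq j_0}\mathcal{R}_{kij}^\nu(\alpha)$ is subsumed by the single set $Q_{kbj_0}$, whose measure is bounded directly; summing over the finite range of $b$ and then over the finitely many $k$ again produces a bound $\preceq\rho^{n-1}\alpha^\mu$.

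Adding the three contributions yields the inequality $Meas(\bigcup_{(k,l)\notin\mathcal{X}_1}\mathcal{R}_{kl}^\nu(\alpha))\leq\widetilde{c}_1\rho^{n-1}\alpha^\mu$ for all sufficiently small $\alpha$. The stated monotonicity of $\widetilde{c}_1$ and $\mathcal{X}_1$ under passage to closed subsets $\Pi'\subset\Pi$ is transparent from the construction: every constant in the bookkeeping ($L$, $M$, $M_1$, $c_3$, $c_4$, $L_*$, $\widetilde{K}_*$, $\widetilde{c}_1$) is a Lipschitz-type supremum over $\Pi$ that cannot increase upon shrinking, and $\mathcal{X}_1$ itself is cut out by inequalities involving these same constants.
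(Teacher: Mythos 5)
Your three-case decomposition is a sensible elaboration of the paper's terse one-line proof (which simply sums the bound of Lemma~\ref{518} over $k$, using $\tau\geq n+2$ for convergence). Your case (i) is correct, and the observation that $\max_{l\in\Lambda}\|l\|_{-2\beta}\leq 2$ forces $K_*\leq\widetilde K_*$ is exactly what is needed; case (ii) ($l=0$) is strictly speaking not part of the union in this lemma --- the paper treats it separately via the set $\mathcal{X}_2$ in the lemma that follows --- but your argument for it is harmless and correct.

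Case (iii) has a genuine gap. You propose a \emph{fixed} truncation point $j_0:=\exp(L_*^{1/(2\beta)})$, depending only on $L_*$, and then bound the entire tail $\bigcup_{j\geq j_0}\mathcal{R}^{\nu}_{kij}(\alpha)$ by the single set $Q_{kbj_0}$, asserting that its measure is ``bounded directly.'' But by the formula in the proof of Lemma~\ref{518},
$Meas(Q_{kbj_0}) \preceq \big(\alpha b|k|^{-\tau} + \alpha(1+\ln j_0)^{-2\beta} + M_1 j_0^{\delta}\big)/|k|$,
and with $j_0$ fixed the third summand $M_1 j_0^{\delta}$ is a nonzero constant independent of $\alpha$. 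This term does \emph{not} tend to $0$ as $\alpha\to 0$, so the resulting bound is $O(1)$, not $O(\alpha^{\mu})$; the chain-monotonicity trick alone is not enough. The correct move, exactly as in the proof of Lemma~\ref{518}, is to take an $\alpha$-dependent truncation $j_1\sim\alpha^{\mu/\delta}$ (so $j_1\to\infty$ as $\alpha\to 0$), bound each of the finitely many intermediate zones with $j_0\leq j<j_1$ by $c_4\alpha/A_k$ via Lemma~\ref{rrrr} (applicable because $\|l\|_{2\beta}\geq L_*$), yielding a contribution $\lesssim\alpha j_1 = \alpha^{\mu}$, and then bound the remaining tail $j\geq j_1$ by $Q_{kbj_1}$, whose third term is now $M_1 j_1^{\delta}\lesssim\alpha^{\mu}$. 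The balance is sharp: the identity $1 + \mu/\delta = \delta/(\delta-1) = \mu$ shows both contributions are of exactly the same order $\alpha^{\mu}$. Once this is repaired, the rest of your argument (summation over $b$ and the finitely many $k<\widetilde K_*$, absorption of $\rho^{n-1}$ into the constants, monotonicity under shrinking $\Pi$) goes through.
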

\begin{proof}
The proof is from Lemma \ref{518} with $\tau\geq n+2$ and similar as \cite{Pos}.
\end{proof}
If as above we set $l_0=0$, then we obtain a similar lemma as above.
\begin{lemma}
There exists a finite subset $\mathcal{X}_2\subset\mathcal{Z}$ and a constant $\tilde{c}_2$ such that
$$Meas(\bigcup_{(k,l_0)\notin\mathcal{X}_2}\mathcal{R}_{kl_0}^\nu(\alpha))\leq\tilde{c}_2\rho^{n-1}\alpha^\mu$$
for all sufficiently small $\alpha$.
The constant $\tilde{c}_2$ and the index set $\mathcal{X}_2$ are monotone functions of the domain $\Pi$: they do not increase for closed subsets of $\Pi$. In particular,
$\mathcal{X}_2\subset\{(k,l_0):\ 0\leq|k|<\widetilde{K}_{*}\}$.
\end{lemma}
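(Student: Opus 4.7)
The plan is to run the argument of Lemma \ref{520} verbatim in the simpler setting $l = l_0 = 0$, where only the zeroth-order Melnikov condition needs to be analyzed. Since $\langle l_0\rangle = 1$, the resonance zone reads
\begin{equation*}
\mathcal{R}_{k l_0}^\nu(\alpha) = \{\xi\in\Pi_\nu:\ |\langle k,\omega_\nu(\xi)\rangle| < \alpha/A_k\},
\end{equation*}
and since $(0,0)\notin \mathcal{Z}$ I may restrict to $k \neq 0$. The Lipschitz hypotheses $|\omega'-\omega|^{\mathfrak{L}} \leq 1/(2L)$ and $|\omega^{-1}|^{\mathfrak{L}} \leq L$ imply that $\omega'$ is bilipschitz on $\Pi$ with controlled constants, so the standard one-dimensional sublevel-set estimate for $\xi\mapsto\langle k,\omega'(\xi)\rangle$ yields, exactly as in Remark \ref{0melnicov},
\begin{equation*}
Meas(\mathcal{R}_{k l_0}'(\alpha))\leq \frac{c_4 \alpha}{A_k},
\end{equation*}
uniformly in $\nu$.

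Next, I set $\widetilde{K}_* = 16LM + c(\tau,\iota,\delta)$ with $c(\tau,\iota,\delta)$ as in (\ref{ctautdef}), and define
\begin{equation*}
\mathcal{X}_2 = \{(k,l_0):\ 0<|k|<\widetilde{K}_*\},
\end{equation*}
which is manifestly finite. For $|k|\geq \widetilde{K}_*$, the factorial-type lower bound in (\ref{ctautdef}) guarantees that $A_k = \exp(|k|^{1/\iota})\geq |k|^{\tau-1}$, so that
\begin{equation*}
Meas(\mathcal{R}_{k l_0}'(\alpha))\leq \frac{c_4\alpha}{|k|^{\tau-1}}\leq \frac{c_4 \alpha^\mu}{|k|^{\tau-1}}
\end{equation*}
for sufficiently small $\alpha$, using that $\mu=\delta/(\delta-1)\in(0,1)$ (so $\alpha\leq\alpha^\mu$ when $\alpha\leq 1$).

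Finally, summing over $|k|\geq\widetilde{K}_*$ and using the assumption $\tau\geq n+2$, which makes $\sum_{k\in\Z^n\setminus\{0\}} |k|^{-(\tau-1)}<\infty$, I obtain
\begin{equation*}
Meas\Big(\bigcup_{(k,l_0)\notin \mathcal{X}_2} \mathcal{R}_{k l_0}^\nu(\alpha)\Big)\leq \tilde{c}_2 \rho^{n-1}\alpha^\mu,
\end{equation*}
where the $\rho^{n-1}$ factor comes from the explicit form of $c_4=C_nL^nM^{n-1}\rho^{n-1}c_3^{-1}$. The monotonicity claim is automatic: $L$, $M$, $\rho=\mathrm{diam}(\Pi)$ all decrease under restriction to a closed subset, hence so do $\widetilde{K}_*$ and $\tilde{c}_2$. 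There is no serious obstacle in this argument --- the only subtlety is verifying $A_k\geq |k|^{\tau-1}$ above the threshold, which is exactly why the factorial ingredient was built into the definition (\ref{ctautdef}) of $c(\tau,\iota,\delta)$; the combinatorial $l$-sum that complicated the proof of Lemma \ref{520} is entirely absent here, so the argument reduces to a one-dimensional small divisor count followed by the convergent sum $\sum |k|^{-(\tau-1)}$.
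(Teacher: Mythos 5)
Your proof is correct and follows exactly the route the paper intends: the paper itself states this lemma with no separate proof, saying only ``If as above we set $l_0=0$, then we obtain a similar lemma as above,'' and the intended argument is precisely your combination of Remark \ref{0melnicov} (which gives $Meas(\mathcal{R}'_{kl_0})\leq c_4\alpha/A_k$ for $k\neq 0$), the observation that $A_k\geq |k|^{\tau-1}$ once $|k|\geq \widetilde{K}_*$, the elementary bound $\alpha\leq\alpha^\mu$, and the convergent sum $\sum_{k}|k|^{-(\tau-1)}$ using $\tau\geq n+2$, with monotonicity inherited from $L$, $M$, $\rho$. The absence of the $b$-sum and $j_0$-threshold from Lemma \ref{518} is exactly why this case is simpler, as you note.
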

\noindent {\bf Proof of Theorem \ref{Meastheorem}}:
If we choose $$\gamma_0\leq\min\Big\{\Big(\frac{1}{4c_1}\Big)^{10},\ \Big(\frac{1}{8c_0M}\Big)^{4},\ c_2(s_0),\ \frac{1}{4c_1}\exp(-2\widetilde{K_*}^{\frac12})\Big\},$$
then $K_0\geq\widetilde{K_*}.$ Thus, when $\nu\geq 1$ and $l\in \Lambda$,
$Meas(\bigcup\limits_{(k,l)\in \mathcal{X}_1}\mathcal{R}_{kl}^{\nu}(\alpha))= 0$. Since $\mathcal{X}_1$ is finite,  { Assumption $\mathcal{A}$}  implies
$Meas(\bigcup\limits_{(k,l)\in \mathcal{X}_1}\mathcal{R}_{kl}(\alpha))\rightarrow  0 $ as $\alpha\rightarrow 0$. Combined with Lemma \ref{520}, we have
\begin{eqnarray}\label{lambda1}
Meas\Big(\bigcup\limits_{l\in \Lambda}\mathcal{R}_{kl}^\nu(\alpha)\Big)\rightarrow 0, \qquad {\rm as }\ \alpha\rightarrow 0.
\end{eqnarray}
The proof for $l_0=0$ is similar. We have
\begin{eqnarray}\label{lambda2}
Meas\Big(\bigcup\limits_{k\neq 0}\mathcal{R}_{kl_0}^\nu(\alpha)\Big)\rightarrow 0, \qquad {\rm as }\ \alpha\rightarrow 0.
\end{eqnarray}
Combined with (\ref{lambda1}) and (\ref{lambda2}) we complete the proof.\qed

\section{Appendix}
\begin{lemma}\label{basic1}
For $j\geq 1$ and $\beta> 1$, there exists a constant $C(\beta)$ independent of $j$ such that,
$$
\sum\limits_{l\geq 1}\frac{1}{(1+|j-l|)(1+\ln l)^{\beta}}\leq C(\beta).$$
\end{lemma}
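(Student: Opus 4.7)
The plan is to bound the sum by splitting the summation range into three regions according to whether $l$ is much smaller than $j$, comparable to $j$, or much larger than $j$, and estimating each piece separately. In each region exactly one of the two factors $(1+|j-l|)^{-1}$ or $(1+\ln l)^{-\beta}$ will provide the needed smallness, while the other gets replaced by a single bound that can be pulled out of the sum.

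Concretely, I would split as $\sum_{l\geq 1}=S_1+S_2+S_3$, where $S_1$ is the range $1\le l\le j/2$, $S_2$ is the range $j/2<l<2j$, and $S_3$ is $l\ge 2j$ (for $j=1,2$ some pieces are empty, which only helps). In $S_1$ we have $1+|j-l|\ge 1+j/2$, so pulling this factor out leaves $\sum_{l\le j/2}(1+\ln l)^{-\beta}$, which by an integral comparison is $\lesssim j/(1+\ln j)^\beta$; multiplied by the $2/j$ prefactor this gives a bound independent of $j$. In $S_3$ we have $1+|j-l|\ge l/2$, so the summand is dominated by $2 l^{-1}(1+\ln l)^{-\beta}$, and the tail $\sum_{l\ge 2}\frac{1}{l(1+\ln l)^\beta}$ converges by comparison with $\int_2^\infty \frac{dx}{x(1+\ln x)^\beta}=\frac{1}{(\beta-1)(1+\ln 2)^{\beta-1}}$, which is finite precisely because $\beta>1$.

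The middle piece $S_2$ is the one to watch: here neither factor is automatically tiny. However, on $j/2<l<2j$ we have $(1+\ln l)^\beta\ge (1+\ln(j/2))^\beta$, so that factor becomes a uniform constant in the sum, and
\[
S_2\le \frac{1}{(1+\ln(j/2))^\beta}\sum_{j/2<l<2j}\frac{1}{1+|j-l|}\le \frac{2(1+\ln j)}{(1+\ln(j/2))^\beta}\lesssim \frac{1}{(1+\ln j)^{\beta-1}},
\]
which is bounded uniformly in $j$, again using $\beta>1$. Combining the three bounds yields a constant $C(\beta)$ independent of $j$.

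The only real obstacle is making sure the logarithmic weight in $S_2$ absorbs the harmonic-sum blow-up $\sum 1/(1+|j-l|)\asymp \ln j$, which is exactly why the hypothesis $\beta>1$ enters; all other estimates are routine. Small values of $j$ (where $S_1$ is empty) require no separate treatment since removing a summand only decreases the total.
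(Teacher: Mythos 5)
Your proof is correct, but it takes a genuinely different route from the paper's in the crucial middle region. The paper decomposes the index set into $1\le l\le j/2$, $j/2<l\le j$, and $l>j$, and in the latter two pieces changes variables to $k=|j-l|$ and exploits the monotonicity $\ln(j\pm k)\ge\ln k$ (valid there since $k\le j/2$ in the middle piece, and trivially for $l>j$), reducing everything to the single convergent series $\sum_k \frac{1}{(1+k)(1+\ln k)^\beta}$. You instead take a multiplicatively symmetric window $j/2<l<2j$, freeze the logarithmic weight at its smallest value $(1+\ln(j/2))^\beta$, and absorb the harmonic-sum contribution $\sum_{|m|<j}\frac{1}{1+|m|}\lesssim \ln j$ using the surplus power of $\ln j$, which requires exactly $\beta>1$. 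Both arguments are sound and use $\beta>1$ in an essential way, but the mechanisms differ: the paper transfers the logarithm onto $k=|j-l|$, while you keep it on $l$ and pay with a $\ln j$ factor. A nice byproduct of your version is that it yields the quantitative bound $S_1+S_2\lesssim(1+\ln j)^{1-\beta}\to 0$, whereas the paper only aims for a uniform constant. One minor point worth making explicit in a final write-up: the integral comparison $\sum_{l\le j/2}(1+\ln l)^{-\beta}\lesssim j/(1+\ln j)^\beta$ is true but not entirely standard; it follows from $\frac{d}{dx}\bigl[x(1+\ln x)^{-\beta}\bigr]=(1+\ln x)^{-\beta}-\beta(1+\ln x)^{-\beta-1}\ge \tfrac12(1+\ln x)^{-\beta}$ for $x$ large, or simply by observing that the crude bound $S_1\le \frac{2}{j}\cdot\frac{j}{2}=1$ already suffices, as the paper does.
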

\begin{proof} The summation is divided into three parts,
$$(\sum\limits_{1\leq l\leq j/2}+\sum\limits_{j/2<l\leq j}+\sum\limits_{l> j})\frac{1}{(1+|j-l|)(1+\ln l)^{\beta}}:=(I)+(II)+(III).$$
Hence the result is followed by the following facts:
\begin{eqnarray*}
(I)&\leq & \frac{j}{2}\frac{1}{\frac{j}{2}(1+\ln 2)^{\beta}}\leq C(\beta),\\
(II)&\leq&  \sum\limits_{1\leq k\leq j/2}\frac{1}{k(1+\ln (j-k-1))^{\beta}}\leq  \sum\limits_{1\leq k\leq j/2}\frac{1}{k(1+\ln k)^{\beta}}\leq C(\beta),\\
(III)&\leq & \sum\limits_{k> 1}\frac{1}{(1+k)(1+\ln (j+k))^{\beta}}
\leq  \sum\limits_{k> 1}\frac{1}{(1+k)(1+\ln k)^{\beta}}\leq C(\beta).
\end{eqnarray*}

\end{proof}
\begin{remark}
If $\beta\geq2$, then $
\sum\limits_{l\geq 1}\frac{1}{(1+|j-l|)(1+\ln l)^{\beta}}\leq C,$
where $C$ is independent of $j$ and $\beta$.
\end{remark}

\begin{lemma}\label{basic2}
 For any $j,l\geq 1$,  $p\geq 2$ and $\beta\geq 1$,
 $$\sum_{l\geq1}\frac{(1+j)^2}{l^p(1+|j-l|)^2(1+\ln l)^{2\beta}}\leq C.$$
\end{lemma}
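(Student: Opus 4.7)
The plan is to split the summation range at $l=j/2$, using one mechanism to absorb the $(1+j)^2$ factor in each piece. Specifically, I will write
\begin{eqnarray*}
\sum_{l\geq 1}\frac{(1+j)^2}{l^p(1+|j-l|)^2(1+\ln l)^{2\beta}}
=\Big(\sum_{1\leq l\leq j/2}+\sum_{l>j/2}\Big)\frac{(1+j)^2}{l^p(1+|j-l|)^2(1+\ln l)^{2\beta}}
:= (I)+(II),
\end{eqnarray*}
and estimate each piece separately, showing each is bounded by a constant independent of $j$.

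For $(I)$, when $1\leq l\leq j/2$ we have $|j-l|\geq j/2$, hence $(1+|j-l|)^2\geq (1+j/2)^2\geq C^{-1}(1+j)^2$. This cancels the $(1+j)^2$ in the numerator, so
\begin{eqnarray*}
(I)\leq C\sum_{1\leq l\leq j/2}\frac{1}{l^p(1+\ln l)^{2\beta}}\leq C\sum_{l\geq 1}\frac{1}{l^p},
\end{eqnarray*}
which converges since $p\geq 2$.

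For $(II)$, when $l>j/2$ we have $l\geq j/2$, hence $l^2\geq j^2/4\geq C^{-1}(1+j)^2$, so $(1+j)^2/l^p\leq C/l^{p-2}\leq C$ (using $p\geq 2$ and $l\geq 1$). Therefore
\begin{eqnarray*}
(II)\leq C\sum_{l>j/2}\frac{1}{(1+|j-l|)^2(1+\ln l)^{2\beta}}\leq C\sum_{m\in\Z}\frac{1}{(1+|m|)^2}\leq C,
\end{eqnarray*}
uniformly in $j$, where in the last step I reindex by $m=j-l$ and use $\beta\geq 1$ to drop the logarithmic factor. Combining $(I)$ and $(II)$ proves the claim.

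This lemma is purely a routine dyadic decomposition argument; there is no serious obstacle. The only point to be a little careful about is that the $(1+\ln l)^{-2\beta}$ factor is genuinely unused in bounding $(I)$ and $(II)$ — the decay $l^{-p}$ (with $p\geq 2$) is what does all the work, which is why the conclusion is independent of $\beta\geq 1$.
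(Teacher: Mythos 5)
Your proof is correct and uses the same decomposition as the paper (splitting the sum at $l\sim j/2$, using $|j-l|\gtrsim j$ on the low side and $l\gtrsim j$ on the high side to absorb the $(1+j)^2$). The only difference is in bounding $(II)$: the paper drops the $(1+j)^2/l^{p}$ factor to a constant and then invokes Lemma~\ref{basic1} to sum $\frac{1}{(1+|j-l|)^2(1+\ln l)^{2\beta}}$, whereas you simply discard the logarithmic factor and observe that $\sum_{m\in\Z}(1+|m|)^{-2}$ converges; your route is slightly more elementary since it sidesteps the dependence on the other appendix lemma, but the two arguments are substantively the same.
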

\begin{proof}
\begin{eqnarray*}
&&\sum_{l\geq1}\frac{(1+j)^2}{l^p(1+|j-l|)^2(1+\ln l)^{2\beta}}\\
&\leq & \sum\limits_{2l\leq j}\frac{(1+j)^2}{l^p(1+|j-l|)^2(1+\ln l)^{2\beta}}+\sum\limits_{2l>j}\frac{(1+j)^2}{l^p(1+|j-l|)^2(1+\ln l)^{2\beta}}= (I) +(II).
\end{eqnarray*}
We estimate $(I)$ and $(II)$ respectively.
For (I), note $|j-l|\geq j/2$ and $p\geq 2$, we have
$$(I)\leq \sum\limits_{2l\leq j}\frac{(1+j)^2}{l^p(1+j/2)^2}\leq 4\sum\limits_{l\geq 1}\frac{1}{l^p}\leq C.$$
For (II), from $p\geq 2$ and $\beta\geq 1$,
\begin{eqnarray*}
(II)&\leq & \sum\limits_{2l>j} \frac{(1+j)^2}{l^2(1+|j-l|)^2(1+\ln l)^{2\beta}}\\
&\leq &\sum\limits_{2l>j} \frac{(1+j)^2}{(j/2)^2(1+|j-l|)^2(1+\ln l)^{2\beta}}\\
&\leq & C\sum\limits_{l\geq 1}\frac{1}{(1+|j-l|)^2(1+\ln l)^{2\beta}}\\
\underline{{\rm Lemma}\  \ref{basic1}} &\leq & C.
\end{eqnarray*}
\end{proof}

\end{document}